\pgfplotsset{compat=newest}
\title{Adaptive Refinement for Unstructured T-Splines with Linear Complexity}
\author{Roland Maier%
\footnote{Institute of Mathematics, Friedrich Schiller University Jena, Ernst-Abbe-Platz 2, 07743 Jena, Germany}
, Philipp Morgenstern%
\footnote{Institut für Angewandte Mathematik, Leibniz Universität Hannover, Welfengarten 1, 30167 Hannover, Germany}
, Thomas Takacs\footnote{Johann Radon Institute for Computational and Applied Mathematics, \"Osterreichische Akademie der Wissenschaften, Altenberger Stra{\ss}e 69, 4040 Linz, Austria}}
\newcommand          \elements     {\mathcal Q}
\newcommand          \edges        {\mathcal E}
\newcommand          \nodes        {\mathcal N}
\newcommand          \anchors      {\nodes_{\mathrm A}}
\newcommand          \enodes       {\nodes_{{\mathrm{EO}}}}
\newcommand          \interioredges{\edges_{\mathrm{int}}}
\newcommand          \interiornodes{\nodes_{\mathrm{int}}}
\newcommand          \boundaryedges{\edges_{\mathrm{bd}}}
\newcommand          \boundarynodes{\nodes_{\mathrm{bd}}}
\newcommand          \element      {\textsl{\texttt{Q}}}
\newcommand          \elementSmall {\textsl{\texttt{q}}}
\newcommand          \edge         {\textsl{\texttt{E}}}
\newcommand          \node         {\textsl{\texttt{N}}}
\newcommand          \edgeSmall    {\textsl{\texttt{e}}}
\newcommand          \nodeSmall    {\textsl{\texttt{n}}}
\newcommand          \nodeinRd     {\mathbf{n}}
\newcommand          \valence[1]   {v_{#1}}
\newcommand          \length[1]    {\mathrm{L}(#1)}
\newcommand          \domain[1]    {\mathcal{D}(#1)}
\newcommand          \pullback[1]  {\widehat{#1}}
\newcommand          \mapping[1]   {\mathbf{F}_{#1}}
\newcommand          \level[1]     {\ell(#1)}
\newcommand          \kk           {\mathrm{k}}
\newcommand          \kspecial     {\kappa_0}
\newcommand          \bmesh        {\textsc{Bezier}(\mesh)}
\newcommand          \belements    {\textsc{Bezier}(\elements)}
\newcommand          \bedges       {\textsc{Bezier}(\edges)}
\newcommand          \bnodes       {\textsc{Bezier}(\nodes)}
\newcommand          \splines      {\mathcal S}
\newcommand          \ceilfrac [2] {\mathchoice{\bigl\lceil\tfrac{#1}{#2}\bigr\rceil}{\lceil\frac{#1}{#2}\rceil}{\lceil\frac{#1}{#2}\rceil}{\lceil\frac{#1}{#2}\rceil}}
\newcommand          \Cref     [1] {C_{\ref{#1}}}
\newcommand          \cref     [1] {c_{\ref{#1}}}
\newcommand          \cS           {\mathcal{S}}
\newcommand          \cB           {\mathcal{B}}
\DeclareMathOperator \di           {di}
\DeclareMathOperator \dist         {dist}
\DeclareMathOperator \diam         {diam}
\DeclareMathOperator \ext          {ext}
\DeclareMathOperator \ep           {ep}
\newcommand          \integer      {\mathbb Z}
\DeclareMathOperator \KV           {\mathsf K}
\DeclareMathOperator \kv           {\mathsf k}
\newcommand          \mesh         {\mathcal{M}}
\DeclareMathOperator \midp         {mid}
\newcommand          \nat          {\mathbb N}
\DeclareMathOperator \neighb       {neighb}
\newcommand          \Q            {\textsl{\texttt{Q}}}
\newcommand          \q            q 
\newcommand          \real         {\mathbb R}
\DeclareMathOperator \refine       {\textsc{refine}}
\let                 \sei          \coloneqq
\DeclareMathOperator \subdiv       {\textsc{subdiv}}
\newcommand          \bzeta        {\boldsymbol{\zeta}}
\newcommand     \denoteuniform[1]  {\mathsf{u}[#1]}
\newcommand          \meshuni [1]  {\mesh^d_{\denoteuniform{#1}}}
\newcommand     \elementuni [1]    {\elements^d_{\denoteuniform{#1}}}
\newcommand          \edgeuni [1]  {\edges^d_{\denoteuniform{#1}}}
\newcommand          \nodeuni [1]  {\nodes^d_{\denoteuniform{#1}}}
\newcommand          \mfrac [2]    {\mathchoice{\tfrac{#1}{#2}}{\tfrac{#1}{#2}}{#1/#2}{#1/#2}}
\newcommand          \tbigcup      {\mathchoice{{\textstyle\bigcup}}{\bigcup}{\bigcup}{\bigcup}}
\newcommand          \sk           {\mathsf{Sk}}
\newcommand          \dc           {\mathsf{D}}
\newcommand          \emb          {\mathsf{emb}}
\DeclareMathOperator \Exists       \exists
\DeclareMathOperator \Forall       \forall
\theoremstyle{definition}
\newtheorem{df}{Definition}
\numberwithin{df}{section}
\newtheorem{assu}[df]{Assumption}
\newtheorem{rem}[df]{Remark}
\theoremstyle{theorem}
\newtheorem{theorem}[df]{Theorem}
\newtheorem{prop}[df]{Proposition}
\begin{document}

\maketitle

\begin{abstract}
\textbf{\abstractname.}
We present an adaptive refinement algorithm for T-splines on unstructured 2D meshes. While for structured 2D meshes, one can refine elements alternatingly in horizontal and vertical direction, such an approach cannot be generalized directly to unstructured meshes, where no two unique global mesh directions can be assigned. To resolve this issue, we introduce the concept of 
direction indices, i.e., integers associated to each edge, which are inspired by theory on higher-dimensional structured T-splines. 
Together with refinement levels of edges, these indices essentially drive the refinement scheme.
We combine these ideas  with an edge subdivision routine that allows for I-nodes, yielding a very flexible refinement scheme that nicely distributes the T-nodes, preserving global linear independence, 
analysis-suitability (local linear independence) except in the vicinity of extraordinary nodes, 
sparsity of the system matrix, and shape regularity of the mesh elements. Further, we show that the refinement procedure has linear complexity in the sense of guaranteed upper bounds on 
a) the distance between marked and additionally refined elements, and on 
b) the ratio of the numbers of generated and marked mesh elements.
\end{abstract}

\textbf{Keywords.} T-splines, unstructured meshes,
adaptive refinement

\section{Introduction}

T-splines refer to a realization of B-splines on irregular meshes. They were introduced in~\cite{SZBN:2003} in the context of Computer-Aided Design and allow for local mesh refinement~\cite{SCFNZL:2004} without the requirement of a hierarchical basis. They were successfully applied in the context of Isogeometric Analysis~\cite{BCCEHLSS:2010,DJS:2010}, but in the beginning also showed weaknesses such as linear dependencies~\cite{BCS:2010} or even non-nestedness of approximation spaces~\cite{Li:Scott:2011} in certain cases. The problem of linear dependence was overcome by the introduction of analysis-suitability~\cite{ZSHS:2012}, which requires that T-junction extensions do not intersect, and the more abstract but equivalent concept of dual-compatibility~\cite{BBCS:2012}. Then again, the refinement algorithm in~\cite{SLSH:2012} yielded nested spline spaces and also preserved linear independence. Further works on the theoretical background considered arbitrary polynomial degrees~\cite{BBSV:2013} and the construction of T-spline meshes from boundary representations in 3D~\cite{ZWH:2012,WZLH:2013}. At that time, however, the linear independence of higher-dimensional T-splines was only characterized through the dual-compatibility criterion. This was resolved in~\cite{Morgenstern:2016}, where a definition of T-junction extensions and analysis-suitability was presented in three dimensions, which was generalized to arbitrary dimensions in~\cite{Morgenstern:2017}.

Simultaneously, the research on 2D T-splines continued, for instance in the context of analysis-suitable T-spline spaces with globally highest smoothness~\cite{Li:Scott:2014} or locally reduced smoothness~\cite{BBS:2015}. Concerning unstructured meshes, T-splines were originally defined locally on structured, rectangular regions but already in the earliest works extended to more unstructured domains by combining them with subdivision surface constructions near so-called extraordinary nodes. However, to avoid the non-finite representation of subdivision surfaces, special geometrically continuous constructions based on local splits and/or locally higher polynomial degree were introduced in~\cite{SSELBHS:2013,CLBZG:2016,CWTLHKZ:2020}. In~\cite{TosSH17}, unstructured T-splines were constructed that are piecewise bicubic, globally $C^1$-smooth and preserve linear independence also around extraordinary nodes. The construction is based on the concept of having a convenient design space and an analysis space for theoretical purposes. The construction was later extended in~\cite{WLQHZC:2021} to allow more general mesh configurations. While the constructions in~\cite{TosSH17,CWTLHKZ:2020,WLQHZC:2021} allow quite flexible mesh configurations and result in $C^1$-smooth spaces, they are all based on specific splits of the elements near extraordinary nodes and are formulated for cubic T-splines only. In contrast, the construction we present in this paper requires a more restrictive separation of extraordinary nodes, but is stated for T-splines of general polynomial degree.

In this contribution, we consider unstructured spline spaces on two-dimensional unstructured meshes motivated by~\cite{STV:2016}, which yield $C^{p-1}$-continuous splines except for the vicinity of extraordinary nodes, where the continuity is reduced to $C^0$-continuity. These spaces are combined with the theory on higher-dimensional T-splines in~\cite{Morgenstern:2017}. More precisely, we show that unstructured meshes around an extraordinary node can be understood as traces of higher-dimensional structured meshes, for which the refinement routine of~\cite{Morgenstern:2017} is applicable. In practice, this `embedding' of the mesh can be avoided by the idea of so-called direction indices, i.e., integers associated to each edge. These integers can be understood as the equivalent of a certain space dimension in higher-dimensional structured meshes and mark a crucial ingredient in the resulting refinement algorithm together with the refinement levels of edges. 

Throughout this paper, we restrict ourselves to the case of odd-degree spline functions. For even polynomial degrees, we refer the reader to~\cite{BBSV:2013}, where the additional concept of anchor elements is explained. Further note that the construction we present here yields analysis-suitable T-splines. This condition may be weakened by generating only so-called AS++ T-splines, as developed in~\cite{LZ:2018,ZL:2018,LL:2021}.\medskip

This paper is organized as follows. Section~\ref{sec: Preliminaries} introduces the quadrilateral 2D meshes on which we then construct structured and unstructured T-spline spaces in Section~\ref{sec: Manifold splines}. 
In Section~\ref{sec: Refinement}, we present our new refinement algorithm and rigorously analyze properties of the generated meshes and spaces in Section~\ref{sec: mesh properties} before we conclude in Section~\ref{sec: conclusions}. 
Additionally, the connections to manifold splines~\cite{STV:2016} are outlined in the appendix, where also an extension to a larger class of unstructured meshes is presented. Further, some connections to Isogeometric Analysis are drawn.

\section{Preliminaries}
\label{sec: Preliminaries}

The goal of this paper is to define T-splines along with an appropriate adaptive refinement scheme on unstructured T-meshes over polygonal planar domains, planar domains with curved boundaries and surface domains. All domains may have holes. In case of surfaces, we assume that they are orientable. In principle, the concepts that we introduce can be generalized to higher-dimensional objects, such as volumes or space-time domains. We first introduce unstructured T-splines over quadrilateral partitions of the plane. The polygonal partition serves as a template for the underlying topological structure of the mesh. The definition may then be generalized to manifold-like objects.

In the following subsections, we introduce necessary notation and state useful results. For convenience, the main notation is also summarized in Table~\ref{tab:notation}.

\begin{table}[ht]
	\centering
	\begin{tabular}{ll}
		$\element \in \elements$ & mesh elements \\
		$\edge \in \edges$ & mesh edges \\
		$\node = \{ \nodeinRd \} \in \nodes$ & mesh nodes, $\nodeinRd \in \real^d$ \\
		$\mesh = (\elements,\edges,\nodes)$ & mesh (for higher dimensions in principle a longer vector) \\
		$\interioredges$, $\boundaryedges$, $\interiornodes$, $\boundarynodes$ & interior/boundary edges/nodes \\
		{$\anchors$} & {anchors} \\
		{$\enodes$} & {extraordinary nodes}\\
		$\edges(\element)$ & edges of an element $\element$ (contained in $\partial\element$), similarly $\nodes(\element)$, $\nodes(\edge)$ \\
		$\valence{\node}$ & element valence of node $\node$, $\valence{\node}=|\elements(\node)|$ \\
		$\domain{\mesh'}$ & (planar or surface) domain corresponding to a submesh $\mesh'$ \\
		$\domain{\elements'}$ & (planar or surface) domain corresponding to a set of elements $\elements'$\\
		$\pullback{A}$ & parameter domain of a submesh or mesh object (element, set of elements) $A$ \\
		$\length{\edge}$ & length of edge $\edge$ \\
		$\level{\edge}$ & refinement level of edge $\edge$ \\
		$\di(\edge)$ & direction index of edge $\edge$ \\
		{$[i]$ (as subscript)} & {refers to refinement level $i$}
	\end{tabular}
	\caption{Summary of the notation}\label{tab:notation}
\end{table}

\subsection{Partitions and meshes}\label{sec: Partitions-Meshes}

Let $\Omega\subset\real^2$ be an open, bounded domain with a polygonal boundary $\partial \Omega$. On the domain we define a regular partition $(\elements,\edges,\nodes)$ into quadrilaterals $\elements$, edges $\edges$, and nodes $\nodes$. Every node $\node = \{\nodeinRd\} \in \nodes$ corresponds to a point $\nodeinRd\in \real^2$, every edge $\edge \in \edges$ is a line segment between two nodes (excluding the endpoints) and every element $\element \in \elements$ is an open quadrilateral. All edges are either interior edges $\interioredges$ or boundary edges $\boundaryedges$, with $\edges = \interioredges \cup \boundaryedges$, similarly nodes are either interior nodes $\interiornodes$ or boundary nodes $\boundarynodes$, $\nodes = \interiornodes \cup \boundarynodes$. All mesh objects are disjoint, i.e., we have for all $A,A'\in \elements \cup \edges \cup \nodes$, that $A\cap A' = \emptyset$, and we have
\[
\Omega = \bigcup_{A \in \elements \cup \interioredges \cup \interiornodes} A 
\quad \mbox{and}\quad
\partial\Omega = \bigcup_{A \in \boundaryedges \cup \boundarynodes} A.
\]
The elements $\elements$, edges $\edges$, and nodes $\nodes$, together with their connectivity structure, form a topological mesh $\mesh$. The connectivity relations between mesh objects are explained in more detail in the following definition.
\begin{df}[topological structure of the mesh]\label{df: topological-structure-mesh}
	A mesh $\mesh = (\elements,\edges,\nodes)$ is given as a triple of sets of mesh objects (elements, edges, and nodes) together with their topological structure, i.e., the mesh objects satisfy the following connectivity relations:
	\begin{itemize}
		\item An edge or node $A \in \edges \cup \nodes$ is connected to an element $\element\in\elements$, if $A \subset \partial\element$. We denote by $\edges(\element)$ and $\nodes(\element)$ the sets of all edges and nodes, respectively, connected to the element $\element$.
		\item A node $\node \in \nodes$ is connected to an edge $\edge\in\edges$, if $\node \subset \partial\edge$. We denote by $\nodes(\edge)$ the set of all nodes connected to the edge.
		\item We denote by $\elements(\edge)$ the set of those elements $\element \in \elements$, such that $\edge\in\edges(\element)$.
		\item We denote by $\elements(\node)$ and $\edges(\node)$ the sets of those elements and edges, such that $\node\in\nodes(\element)$ and $\node\in\nodes(\edge)$, respectively.
		\item Every node $\node$ has element valence $\valence{\node} \in \nat^+$. That means, $\node$ is connected to $\valence{\node}$ elements and $\valence{\node}$ edges (if it is an interior node) or $\valence{\node}+1$ edges (if it is a boundary node), i.e.,  $|\elements(\node)|=\valence{\node}$, $|\edges(\node)|=\valence{\node}$ for $\node\in\interiornodes$ and $|\edges(\node)|=\valence{\node}+1$ for $\node\in\boundarynodes$.
	\end{itemize}
\end{df}
\begin{prop}\label{prop: mesh-relations}
	We have the following:
	\begin{itemize}
		\item By definition, the connectivity relations are symmetric, e.g., $\node\in\nodes(\element)$ if and only if $\element\in\elements(\node)$.
		\item Every element is connected to at least four edges and nodes, i.e., $|\edges(\element)| = |\nodes(\element)|\geq 4$.
		\item Every edge is connected to exactly two nodes, i.e., $|\nodes(\edge)|=2$.
		\item Every interior edge is connected to two elements, $|\elements(\edge)|=2$ for $\edge\in\interioredges$, and every boundary edge to one element, $|\elements(\edge)|=1$ for $\edge\in\boundaryedges$.
		\item Boundary edges are connected only to boundary nodes and interior nodes are connected only to interior edges.
	\end{itemize}
\end{prop}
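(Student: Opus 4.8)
The plan is to establish each bullet by unwinding Definition~\ref{df: topological-structure-mesh} together with the standing assumptions on the partition, namely that every $\element$ is an open quadrilateral, every $\edge$ is a line segment between two nodes, and that $\Omega$, resp.\ $\partial\Omega$, is the disjoint union of the interior, resp.\ boundary, mesh objects. The first bullet is purely definitional: $\element\in\elements(\node)$ is \emph{defined} to mean $\node\in\nodes(\element)$, and likewise $\element\in\elements(\edge)$ means $\edge\in\edges(\element)$ and $\edge\in\edges(\node)$ means $\node\in\nodes(\edge)$, so the relations are symmetric by construction. The third bullet is equally direct: since $\edge$ is by assumption a line segment between two nodes and $\nodes(\edge)=\{\node:\node\subset\partial\edge\}$, its two endpoints are exactly these two nodes, giving $|\nodes(\edge)|=2$.

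For the second bullet I would argue topologically. The boundary $\partial\element$ of an element is a simple closed curve, and by the partition property it is covered by the disjoint union of the edges and nodes contained in it. Traversing this curve, edges (open segments) and nodes (points) alternate, so the number of incident edges equals the number of incident nodes, which yields $|\edges(\element)|=|\nodes(\element)|$. To obtain the lower bound I would use that a quadrilateral has four corners: since each edge is a single straight segment, a corner cannot lie in the interior of an edge, hence each corner must be a node. This forces at least four nodes and, by the alternation, at least four edges.

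The fourth and fifth bullets follow from the covering identities. An interior edge $\edge\in\interioredges$ lies in the interior of $\Omega$, so a small disk around a point of $\edge$ meets exactly the two elements lying on its two sides, giving $|\elements(\edge)|=2$; a boundary edge lies on $\partial\Omega$ and thus borders a single element, so $|\elements(\edge)|=1$. For the fifth bullet, the endpoints of a boundary edge lie on $\partial\Omega$ and are therefore boundary nodes; contrapositively, an interior node cannot be an endpoint of a boundary edge, so every edge incident to an interior node is an interior edge.

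The step I expect to require the most care is the alternation argument for the second bullet: one must verify that the restriction of the partition to the topological circle $\partial\element$ genuinely produces a cyclic sequence in which nodes and edges strictly alternate, and that no degeneracies—such as a single edge appearing on two sides of $\element$, or a corner concealed in the interior of an edge—can occur. All remaining steps are essentially bookkeeping against the definitions and the disjoint-covering assumption.
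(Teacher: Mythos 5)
Your proposal is correct, but there is nothing to compare it against line by line: the paper states Proposition~\ref{prop: mesh-relations} without any proof, treating all five bullets as immediate consequences of the partition assumptions of Section~\ref{sec: Partitions-Meshes} and Definition~\ref{df: topological-structure-mesh}. Your write-up is a faithful filling-in of exactly those definitional facts, and your bookkeeping for bullets one, three, four and five is sound. The one place where your sketch genuinely needs the extra care you yourself flag is the alternation argument, and the clean way to close it is to lean harder on two standing conventions than you currently do: every edge is a straight open segment \emph{between two nodes}, and all mesh objects are pairwise disjoint. From disjointness, no point of $\partial\element$ can lie in an open element, so $\partial\element$ is covered by edges and nodes; from the node-endpoint convention, an edge whose closure touches a corner of $\element$, or touches the interior of another edge, would force a node at that contact point, which disjointness then turns into a contradiction. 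This simultaneously rules out the two degeneracies you worry about (a corner hidden in the interior of a straight edge, and an edge only partially contained in $\partial\element$), and it is also the hidden ingredient in your disk argument for the fourth bullet: it is what guarantees that each side of an interior edge $\edge$ is bordered by a \emph{single} element whose boundary contains all of $\edge$, rather than by several elements meeting at a T-junction interior to $\edge$. Note also that your phrase ``a quadrilateral has four corners'' is where the bound $\geq 4$ rather than $=4$ enters: for general T-meshes an element may have additional (hanging) nodes on its sides, which is precisely why the proposition asserts only an inequality, with equality later imposed for \emph{regular} meshes. Finally, be aware that your straightness-based corner argument is tied to the planar polygonal setting; Remark~\ref{rem: other 2D meshes} extends the conclusion to curved bivariate meshes by stipulation rather than by proof, so no generality is lost in the paper by your restriction.
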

\begin{rem}\label{rem: other 2D meshes}
	The concept of a partition of a planar, polygonal domain extends directly to more general meshes over bivariate domains $\Omega \subset \real^d$. The mesh is then composed of bivariate elements, which are curved quadrilateral subdomains without boundary, univariate edges, which are curve segments without endpoints, as well as nodes, which are singletons, each containing a point in $\real^d$. All mesh objects satisfy the relations described in Definition~\ref{df: topological-structure-mesh} and Proposition~\ref{prop: mesh-relations}. Hence, from now on we do not distinguish between planar partitions and more general bivariate meshes.
\end{rem}

\begin{df}[submesh and its domain] 
	Let $\mesh = (\elements,\edges,\nodes)$ be a mesh and let $\elements' \subseteq \elements$ be a subset of elements of the mesh. Then the corresponding \emph{submesh} $\mesh' = (\elements',\edges',\nodes')$ contains all those mesh objects $A \in \edges \cup \nodes$, such that 
	\[
	A \subset \overline{\domain{\mesh'}},
	\]
	where the open set
	\[
	\domain{\mesh'} = \big(\overline{\bigcup_{\element\in \elements'} \element}\big)^\circ
	\]
	is the corresponding \emph{domain} $\domain{\mesh'}\subseteq \Omega$ of the submesh.
	Since the domain depends only on the elements of the submesh, we also use the notation $\domain{\elements'} = \domain{\mesh'}$. 
	The edges and nodes of the submesh are again split into interior ($A \subset \domain{\mesh'}$) and boundary ($A \subset \partial\domain{\mesh'}$) edges and nodes, respectively.
\end{df}

\begin{df}[regular mesh]
	We call a mesh \emph{regular} if the closure of any element $\element\in \elements$ contains exactly four edges and four nodes and the closures of any two elements $\element,\element' \in \elements$
	\begin{enumerate*}[label=\alph*), font=\sffamily]
		\item are disjoint or 
		\item equal or 
		\item share exactly one edge and two nodes or 
		\item share exactly one node.
	\end{enumerate*}
\end{df}
Thus, a regular mesh (or a regular submesh) is a mesh without hanging nodes in its interior.

\begin{df}[extraordinary node]
	A node of a regular mesh is called \emph{extraordinary} if 
	\begin{enumerate*}[label=\alph*), font=\sffamily]
		\item it is a boundary node neighboring more than $2$ quadrilaterals or
		\item it is an interior node which is not neighboring exactly $4$ mesh elements.
	\end{enumerate*}
\end{df}

\subsection{Parameter domains of elements and structured submeshes}

From now on, we consider to have an initial mesh $\mesh_{[0]} = (\elements_{[0]},\edges_{[0]},\nodes_{[0]})$, which is regular. In the following, we introduce parameter domains to mesh elements of the initial mesh. These parameter domains can then be extended from single elements to larger structured submeshes and to refinements of the initial mesh. 
To this end, we assign a length to each edge.
\begin{df}[edge length]
	On the initial mesh $\mesh_{[0]}$ we assign the \emph{length} $\length{\edge} = 1$ to each edge $\edge \in \edges_{[0]}$.
\end{df}
It is also possible to assign different lengths to different edges, with the restriction that the lengths must be consistent, i.e., for each element the same length is assigned to opposing edges. The choice $\length{\edge}=1$ above is for simplicity.
\begin{df}[parameter domain]
	On the initial mesh $\mesh_{[0]}$ we assign the \emph{parameter domain} $\pullback{\element}=\left]0,1\right[^2$ to each element $\element \in \elements_{[0]}$.
\end{df}
The parameter domain is defined such that it is consistent with the edge lengths. If edge lengths different from $\length{\edge} = 1$ are assigned, then the parameter domains are changed accordingly.
\begin{assu}\label{assu: local-mappings}
	We assume for all $\element\in\elements_{[0]}$ that there exists a regular, positively-oriented $C^\infty$-mapping $\mapping{\element} : \pullback{\element} \rightarrow \element$, i.e., there exists a constant $C>0$ such that $\det\nabla \mapping{\element} (\mathbf x) \geq C > 0$ for all $\mathbf x\in \pullback{\element}$. 
	This implies that $\element$ is not degenerate, e.g., no edge of $\element$ is empty.
	Moreover, we assume that for any pair of elements $\element$, $\element'$ sharing a common edge $\edge$, the mappings $\mapping{\element}$ and $\mapping{\element'}$ are continuous along $\edge$, i.e., there exists a common mapping $\mapping{\element,\element'}:\left(0,1\right)\times\left(-1,1\right) \rightarrow \element \cup \edge \cup \element'$ and Euclidean motions $\mathbf{R}_{\element}$ and $\mathbf{R}_{\element'}$, such that 
	\[
	\mapping{\element,\element'} |_{\left(0,1\right) \times \left(0,1\right)} = \mapping{\element}\circ \mathbf{R}_{\element}
	\]
	and 
	\[
	\mapping{\element,\element'} |_{\left(0,1\right) \times \left(-1,0\right)} = \mapping{\element'} \circ \mathbf{R}_{\element'}.
	\]
\end{assu}
By assumption, the Euclidean motions $\mathbf{R}_{\element}$, $\mathbf{R}_{\element'}$ must be combinations of a rotation by an angle which is an integer multiple of $\frac{\pi}{2}$ and a translation.

Now that we are given parameter domains $\pullback{\element}$ and mappings $\mapping{\element}$ for each element $\element$, we can define (mapped) polynomial functions on each element. Moreover, the assumption that there exists a joint mapping for each edge allows us to define continuity of functions across edges. Consequently, we can define \emph{splines} over the mesh $\mesh_0$. Before we do that, we introduce a generalization of the mapping introduced in Assumption~\ref{assu: local-mappings} and extend all definitions to T-meshes, which are obtained from an initial, regular mesh through refinement.

\begin{df}[structured mesh]\label{df: structured-mesh}
	Let $\mesh$ be any submesh of the initial mesh $\mesh_{[0]}$, which has a simply connected domain $\domain{\mesh}$ and does not contain any extraordinary nodes. We call the mesh \emph{structured}, if in addition there exists a continuous mapping
	\[
	\mapping{\elements}: \domain{\pullback{\mesh}} \rightarrow \domain{\mesh},
	\]
	such that $\pullback{\mesh}$ is a mesh derived from a set of axis-aligned boxes $\pullback{\elements}$, with
	\[
	\pullback{\elements} = \{\mathbf{R}_{\element} (\pullback{\element}),\element\in \elements\},
	\]
	where $\mathbf{R}_{\element}$ is a Euclidean motion, $\domain{\pullback{\mesh}}$ is simply connected and $\mapping{\elements}$ satisfies
	\[
	\mapping{\elements} \circ \mathbf{R}_{\element} = \mapping{\element},
	\]
	for all $\element\in\elements$. Here $\mapping{\element}$ is the mapping from Assumption~\ref{assu: local-mappings}.
\end{df}
The definition of a structured mesh carries over directly to any regular mesh, not just submeshes of the initial mesh.
\begin{prop}\label{prop: covering-with-structured-submeshes}
	Let $\mesh=(\elements,\edges,\nodes)$ be a regular mesh. Each element $\element \in \elements$ and each interior edge $\edge \in \interioredges$ is contained inside a structured submesh. Moreover, for each node $\node \in \nodes$, which is not an extraordinary node, the neighborhood $\node \cup \edges(\node) \cup \elements(\node)$ is contained inside a structured submesh.
\end{prop}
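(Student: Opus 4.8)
The plan is to exhibit, for each case, a concrete submesh and to check the three requirements of Definition~\ref{df: structured-mesh} directly: simple connectedness of the domain, absence of extraordinary nodes, and existence of a global mapping $\mapping{\elements}$ compatible with the local mappings from Assumption~\ref{assu: local-mappings}. For a single element $\element$, I would take the submesh generated by $\{\element\}$, whose domain is the quadrilateral $\element$ itself. It is simply connected, its four nodes are boundary nodes of valence $1$ and hence not extraordinary, and choosing $\mathbf{R}_\element = \mathrm{id}$ together with $\mapping{\elements} = \mapping{\element}$ on $\pullback{\element} = \left]0,1\right[^2$ satisfies $\mapping{\elements}\circ\mathbf{R}_\element = \mapping{\element}$ trivially. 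For an interior edge $\edge \in \interioredges$, Proposition~\ref{prop: mesh-relations} yields exactly two elements $\element,\element'$ with $\edge \in \edges(\element)\cap\edges(\element')$; I would take the submesh generated by $\{\element,\element'\}$. Assumption~\ref{assu: local-mappings} directly provides the joint mapping $\mapping{\element,\element'}$ on $\left(0,1\right)\times\left(-1,1\right)$ together with the Euclidean motions $\mathbf{R}_\element,\mathbf{R}_{\element'}$, so setting $\mapping{\elements} = \mapping{\element,\element'}$ and $\pullback{\elements} = \{\mathbf{R}_\element(\pullback{\element}),\mathbf{R}_{\element'}(\pullback{\element'})\}$ makes the parameter domain the rectangle $\left(0,1\right)\times\left(-1,1\right)$, which is simply connected with simply connected image. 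The two endpoints of $\edge$ then have valence $2$ and the remaining nodes valence $1$, all on the boundary, so none is extraordinary.

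The substantial case is the neighborhood $\node \cup \edges(\node) \cup \elements(\node)$ of a non-extraordinary node, which I would show is contained in the submesh generated by $\elements(\node)$. If $\node$ is a boundary node, then not being extraordinary forces $\valence{\node}\le 2$, and the configuration reduces to the one- or two-element arguments above applied around $\node$. If $\node$ is an interior node, then $\valence{\node} = 4$, so $\elements(\node) = \{\element_1,\dots,\element_4\}$ forms a cyclic ring around $\node$ with consecutive elements $\element_i,\element_{i+1}$ sharing the edges $\edge_i \in \edges(\node)$ (indices modulo $4$). Here I would build the parameterization incrementally: place $\pullback{\element_1}$ with the preimage of $\node$ at the origin, and then use the joint edge mappings $\mapping{\element_i,\element_{i+1}}$ to fix each Euclidean motion $\mathbf{R}_{\element_{i+1}}$ in turn, so that the four unit squares tile the parameter neighborhood $\left(-1,1\right)^2$ and $\mapping{\elements}$ is defined piecewise by the $\mapping{\element_i}$.

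The hard part will be verifying that this incremental gluing closes up consistently, that is, that the motion relating $\element_4$ to $\element_1$ across the last edge $\edge_4$ coincides with the one already forced by the placement of the other three squares, so that $\mapping{\elements}$ is single-valued and continuous across every edge through $\node$. This is exactly where the valence-$4$ condition enters: each parameter square contributes an interior angle of $\frac{\pi}{2}$ at the preimage of $\node$, and since the elements have disjoint interiors whose union is a full neighborhood of $\node$, positive orientation of the mappings forces each successive $\mathbf{R}_{\element_{i+1}}$ to rotate by a quarter-turn in a fixed sense; Assumption~\ref{assu: local-mappings} already restricts these rotations to integer multiples of $\frac{\pi}{2}$. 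The four quarter-turns then sum to a full turn $2\pi \equiv 0$, so their composition is the identity and the parameterization around $\node$ is flat and single-valued, yielding the required continuous $\mapping{\elements}$ on $\left(-1,1\right)^2$. Once this is established, the remaining checks are immediate: $\left(-1,1\right)^2$ and its image are simply connected, the central node has interior valence $4$, the edge-sharing nodes have boundary valence $2$, and the outer corners have valence $1$, so the submesh contains no extraordinary node.
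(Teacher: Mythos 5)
Your construction is correct, and it is worth noting that the paper itself states Proposition~\ref{prop: covering-with-structured-submeshes} without any proof (only the visualization in Figure~\ref{fig: ring mesh} follows), so there is no printed argument to compare against; your proposal supplies the missing details. You correctly identify the only nontrivial point, namely that the gluing around an interior valence-$4$ node closes up: the transitions across the four edges at $\node$ are Euclidean motions whose rotational parts are integer multiples of $\frac{\pi}{2}$ (as noted after Assumption~\ref{assu: local-mappings}), positive orientation forces a quarter-turn in a fixed sense at each step, and the four quarter-turns compose to the identity precisely because $\valence{\node}=4$ --- this is exactly the holonomy obstruction that makes the proposition fail at extraordinary nodes, so the valence hypothesis enters where it must. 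The one- and two-element cases and the boundary cases are handled adequately; for the valence-$2$ boundary node you implicitly use that the two elements in $\elements(\node)$ share an edge through $\node$, which does follow from regularity (a boundary node of valence $2$ has exactly $3$ edges, so the two elements form a fan), but you could state it.

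Two small points deserve attention. First, when you verify ``no extraordinary nodes'' you recompute valences \emph{within the submesh}; strictly speaking, extraordinariness is a property of nodes of the ambient regular mesh, and an outer boundary node of your submesh (e.g., a neighbor of $\node$ of ambient valence $5$) may well be extraordinary in $\mesh$. The intended reading, made explicit in the sentence following Definition~\ref{df: structured T-mesh}, is that a structured submesh contains no extraordinary nodes \emph{in its interior}, and under that reading all your submeshes pass, since every potentially extraordinary ambient node lies on the submesh boundary. You should say this explicitly rather than reassign valences, since under the literal reading of Definition~\ref{df: structured-mesh} your single-element case would fail whenever the element touches an extraordinary node, contradicting the proposition. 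Second, a notational quibble: in Definition~\ref{df: structured-mesh} the motions $\mathbf{R}_{\element}$ carry $\pullback{\element}$ into the plane and satisfy $\mapping{\elements}\circ\mathbf{R}_{\element}=\mapping{\element}$, whereas Assumption~\ref{assu: local-mappings} composes in the opposite order ($\mapping{\element}\circ\mathbf{R}_{\element}$ defined on the reference square); your identification of the two is harmless but should be made with the inverse motions inserted consistently. Neither point is a gap in substance.
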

A visualization of Proposition~\ref{prop: covering-with-structured-submeshes} is shown in Figure~\ref{fig: ring mesh}. For practical purposes, it is recommended to find a small number of structured submeshes to cover the domain.

\begin{figure}[t]
	\centering
	\includegraphics{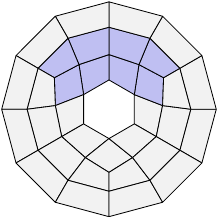}\quad
	\includegraphics{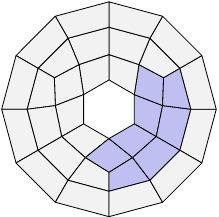}\quad
	\includegraphics{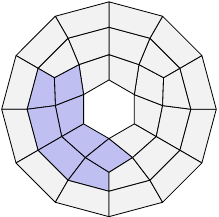}\\[1ex]
	\includegraphics{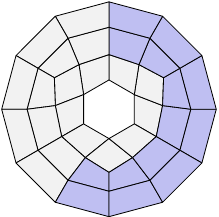}\quad
	\includegraphics{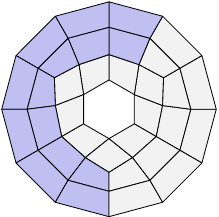}\quad
	\includegraphics{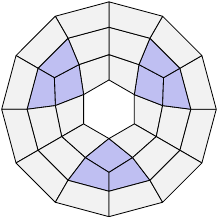}
	\caption{A regular mesh on a ring-shaped domain, which is covered by five structured submeshes (top line, bottom left and center) and three submeshes which are not structured (bottom right).}
	\label{fig: ring mesh}
\end{figure}

\subsection{T-meshes}\label{subsec: T-meshes}

T-meshes are derived from an initial, regular mesh $\mesh_0$ by refinement. In the following, we characterize valid refinements on a mesh.
\begin{df}[T-mesh and splitting operations]\label{df: T-mesh-splitting}
	A T-mesh is either a regular mesh or it is obtained by a splitting operation from a T-mesh, where we allow the following splitting operations:
	\begin{itemize}
		\item An edge $\edge\in \edges$ can be split by bisection, creating two new edges $\edge_1$ and $\edge_2$ of length $\length{\edge_i} = \frac{1}{2}\length{\edge}$ and one new node $\node=\overline{\edge_1}\cap\overline{\edge_2}$.
		\item An element $\element \in \elements$ can be split if the two opposite edges are bisected, creating two new elements $\element_1,\element_2$ and one new edge $\edge=\overline{\element_1}\cap\overline{\element_2}$. The length of the new edge is assigned such that it is consistent with the lengths of the existing edges of the element $\element$. Parameter domains are assigned to the new elements by bisecting the parameter domain $\pullback{\element}$ of the element $\element$. Mappings are then assigned trivially to the new elements by restricting $\mapping{\element}$ to the new parameter domains. 
	\end{itemize}
\end{df}
Thus, all parameter domains of elements of a T-mesh are axis aligned boxes with edge lengths in $\{ 1/2^i, i \in \nat_0 \}$. Since the parameter domains are inherited from the underlying regular mesh, many properties of the regular mesh can be generalized to T-meshes.
\begin{df}[nodes of a T-mesh]\label{df: T-node and I-node}
	A node of a T-mesh is called an \emph{extraordinary node} if and only if it is an extraordinary node of the underlying regular mesh $\mesh_0$. An inner node $\node$ of a T-mesh is called an \emph{I-node} if $|\elements(\node)| = 2$, it is called a \emph{T-node} if it is not an extraordinary node and $|\elements(\node)| = 3$, it is called a \emph{regular node} if $|\elements(\node)| = 4$. Analogously, a boundary node is called an I-node if $|\elements(\node)| = 1$ and a regular node if $|\elements(\node)| = 2$. {The set of extraordinary nodes is denoted $\enodes$.}
\end{df}
We can now introduce the concept of structured T-meshes and structured submeshes of the global T-mesh. 
\begin{df}[structured T-mesh]\label{df: structured T-mesh}
	A submesh of a T-mesh is \emph{structured} if all conditions of Definition~\ref{df: structured-mesh} are satisfied, where for each split element $\element$ the parameter domain $\pullback{\element}$ and mapping $\mapping{\element}$ are given as in Definition~\ref{df: T-mesh-splitting}. Otherwise, the T-mesh is called \emph{unstructured}.
\end{df}
By definition, a structured submesh of an unstructured T-mesh contains no extraordinary nodes in its interior. For each structured submesh $\mesh^*$ we can define standard (structured) T-splines on $\pullback{\mesh^*}$, by defining them on $\domain{\pullback{\mesh^*}}$ and mapping them onto $\domain{\mesh^*}$ using the mapping $\mapping{\elements^*}$. 

We moreover propose the following for T-meshes which are obtained by refining a regular, structured mesh.
\begin{prop}
	Let $\mesh^*$ be a refinement of a mesh~$\mesh'$, following Definition~\ref{df: T-mesh-splitting}. Then~$\mesh^*$ is structured if and only if~$\mesh'$ is structured, with~$\domain{\mesh^*}=\domain{\mesh'}$ and~$\mapping{\elements^*} = \mapping{\elements'}$.
\end{prop}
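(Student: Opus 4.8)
The plan is to reduce the claim to a single splitting operation and then induct on the number of steps: a refinement $\mesh^*$ of $\mesh'$ is, by Definition~\ref{df: T-mesh-splitting}, a finite sequence of edge bisections and element splits, and if each individual step preserves structuredness in both directions together with the domain and the global mapping, then so does their composition. The domain identity is the easy part and holds unconditionally: an element split replaces $\element'$ by $\element_1 \cup \edge \cup \element_2$ with $\overline{\element'} = \overline{\element_1} \cup \overline{\element_2}$, and an edge bisection does not alter the elements at all, so in either case the union of element closures is unchanged and hence $\domain{\mesh^*} = \domain{\mesh'}$; applying the same observation to the box tilings yields $\domain{\pullback{\mesh^*}} = \domain{\pullback{\mesh'}}$ whenever the parameter meshes are defined.

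For the implication ``$\mesh'$ structured $\Rightarrow \mesh^*$ structured'' I would retain the global mapping by setting $\mapping{\elements^*} \sei \mapping{\elements'}$ and obtain $\pullback{\mesh^*}$ by subdividing the boxes of $\pullback{\mesh'}$ exactly as prescribed by the splitting operation. For each child $\element^*$ of a split element $\element'$ I take $\mathbf{R}_{\element^*} \sei \mathbf{R}_{\element'}$, now regarded on the sub-box $\pullback{\element^*} \subset \pullback{\element'}$. The three conditions of Definition~\ref{df: structured-mesh} are then inherited almost verbatim: bisecting an axis-aligned box produces axis-aligned boxes; the parameter domain is unchanged and hence still simply connected; and the compatibility $\mapping{\elements^*} \circ \mathbf{R}_{\element^*} = \mapping{\element^*}$ follows by restricting $\mapping{\elements'} \circ \mathbf{R}_{\element'} = \mapping{\element'}$ to $\pullback{\element^*}$, since by Definition~\ref{df: T-mesh-splitting} the map $\mapping{\element^*}$ is precisely $\mapping{\element'}$ restricted to the new parameter domain.

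The substantial direction is ``$\mesh^*$ structured $\Rightarrow \mesh'$ structured''. An edge bisection changes neither the element boxes nor $\mapping{\elements^*}$, since it only subdivides a parameter edge and inserts an I-node, so that case is immediate. For an element split of $\element'$ into $\element_1, \element_2$ across the new edge $\edge$, the crux is to merge the two child boxes $\mathbf{R}_{\element_1}(\pullback{\element_1})$ and $\mathbf{R}_{\element_2}(\pullback{\element_2})$ back into one. The key sub-claim is that the two Euclidean motions coincide. Along the shared midline of $\pullback{\element_1}$ and $\pullback{\element_2}$ both $\mapping{\element_1}$ and $\mapping{\element_2}$ restrict to $\mapping{\element'}$, so for each midline point the two images under $\mathbf{R}_{\element_1}$ and $\mathbf{R}_{\element_2}$ are carried by $\mapping{\elements^*}$ to one and the same point of $\edge$; the (local) injectivity of $\mapping{\elements^*}$ across $\edge$ then forces these two images to coincide, i.e.\ $\mathbf{R}_{\element_1}$ and $\mathbf{R}_{\element_2}$ agree on the midline, and two positively-oriented Euclidean motions agreeing on a segment are identical. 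Denoting the common motion by $\mathbf{R}_{\element'}$, the set $\mathbf{R}_{\element'}(\pullback{\element'})$ is a single box, the choice $\mapping{\elements'} \sei \mapping{\elements^*}$ satisfies $\mapping{\elements'} \circ \mathbf{R}_{\element'} = \mapping{\element'}$ by combining the two halves with continuity across $\edge$, and collecting all merged boxes yields a valid parameter mesh $\pullback{\mesh'}$ tiling the same, simply connected region $\domain{\pullback{\mesh^*}}$. The eligibility preconditions for $\mesh'$ transfer as well: $\domain{\mesh'} = \domain{\mesh^*}$ is simply connected, and the extraordinary nodes are inherited from $\mesh_0$ and therefore unaffected by refinement or coarsening.

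I expect the box merging to be the main obstacle, and within it the local injectivity of $\mapping{\elements^*}$ across the interior edge $\edge$, which has to be extracted from Definition~\ref{df: structured-mesh} rather than assumed: by Assumption~\ref{assu: local-mappings} each $\mapping{\element}$ is a regular, positively-oriented diffeomorphism, and on the conforming box mesh these glue to a map that is a homeomorphism on a neighborhood of every interior edge, which is exactly what is needed to separate the two preimages. Once this injectivity is in hand, the underlying rigidity fact---an orientation-preserving isometry fixing a line pointwise is the identity---makes the coincidence $\mathbf{R}_{\element_1} = \mathbf{R}_{\element_2}$ routine, and the remainder of the argument is bookkeeping.
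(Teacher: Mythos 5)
The paper states this proposition without proof (it is offered as an observation, ``We moreover propose the following''), so there is no author argument to compare against; your proposal supplies the justification the authors leave implicit, and its architecture is the natural one: reduce to a single splitting operation, note that the domain identity and the forward implication are immediate (children boxes are axis-aligned sub-boxes, $\mathbf{R}_{\element^*}\sei\mathbf{R}_{\element'}$, and $\mapping{\element^*}=\mapping{\element'}|_{\pullback{\element^*}}$ by Definition~\ref{df: T-mesh-splitting}), and isolate the box-merging with $\mathbf{R}_{\element_1}=\mathbf{R}_{\element_2}$ as the crux of the backward implication. All of this is correct, including your observation that a pure edge bisection changes neither the boxes nor the mapping.

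One step does not hold as written, namely the derivation of $\mathbf{R}_{\element_1}(x)=\mathbf{R}_{\element_2}(x)$ from ``local injectivity of $\mapping{\elements^*}$ across $\edge$''. A map that is a homeomorphism on a neighborhood of each interior parameter face only separates preimages lying in a \emph{common} such neighborhood; a priori the boxes $B_1=\mathbf{R}_{\element_1}(\pullback{\element_1})$ and $B_2=\mathbf{R}_{\element_2}(\pullback{\element_2})$ could sit far apart in $\domain{\pullback{\mesh^*}}$, and then local injectivity near either face says nothing about the other preimage. Establishing that $B_1$ and $B_2$ abut along the faces over $\edge$ is exactly what the merging needs, so the appeal is circular as stated. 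The repair is short and uses continuity rather than injectivity: the face $F_1$ of $B_1$ lying over the midline must be an interior face of $\pullback{\mesh^*}$, since otherwise no parameter point is mapped to the interior edge $\edge\subset\domain{\mesh^*}$, contradicting the (intended, and elsewhere used, e.g.\ in Definitions~\ref{df: continuity edge} and~\ref{df: structured T-spline}) reading of $\mapping{\elements^*}$ as a parametrization of all of $\domain{\mesh^*}$. Let $B_3=\mathbf{R}_{\element_3}(\pullback{\element_3})$ be the box abutting $B_1$ along $F_1$; continuity of $\mapping{\elements^*}$ forces the one-sided limits along $F_1$ to coincide, so $\overline{\element_3}$ meets the open edge $\edge$, whence $\element_3\in\{\element_1,\element_2\}$ because $|\elements(\edge)|=2$; since each element owns exactly one box and the boxes are disjoint, $\element_3=\element_2$ and $B_3=B_2$. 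With adjacency in hand, your rigidity argument applies on the common face; note only that Definition~\ref{df: structured-mesh} says ``Euclidean motion'' without an orientation restriction, but the improper mismatch $\mathbf{R}_{\element_2}=\mathbf{R}_{\element_1}\circ S$, with $S$ the reflection across the midline, would send $\pullback{\element_2}$ onto $B_1$ itself and is thus excluded by disjointness of the boxes, so $\mathbf{R}_{\element_1}=\mathbf{R}_{\element_2}$ holds in all cases and the rest of your proof goes through unchanged.
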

From this it follows that Proposition~\ref{prop: covering-with-structured-submeshes} extends directly to T-meshes. 
Since the refinement of a T-mesh is based on bisections of edges, every edge in a T-mesh has length $|\edge|=2^{-\level{\edge}}$, where $\level{\edge}$ is the refinement level of the edge $\edge$, i.e., the number of refinement steps that have been performed on it. The refinement level of an edge is defined properly in Definition~\ref{df: refinement-level} below.

\subsection{The mesh metric and separation of extraordinary vertices}

\begin{df}[mesh metric]\label{df: mesh metric}%
	We define a metric on $\Omega$ by the following steps:
	\begin{enumerate}
		\item Given an initial, regular mesh $\mesh_{[0]} = (\elements_{[0]},\edges_{[0]},\nodes_{[0]})$, we understand the \emph{distance} $\dist_{[0]}(A,B)$ between two distinct nodes $A,B \in\nodes_{[0]}$ as the minimal number $n\in\nat$ such that there is a sequence of $n$ vertex-connected elements of $\elements_{[0]}$ with $A$ being a vertex of the first and $B$ a vertex of the last element. We consider two elements vertex-connected if they share a common vertex. For $A=B$, we set $\dist_{[0]}(A,B)=0$.
		This defines a metric on $\nodes_0$.
		\item For the uniform dyadic refinement $\mesh_{[j+1]}$ of $\mesh_{[j]}$, 
		we observe for all nodes $A,B\in\nodes_{[j]}$ that
		\begin{equation}\label{eq: dist j = 1/2 dist j+1}
			\dist_{[j]}(A,B)= \tfrac12\dist_{[{j+1}]}(A,B)~.
		\end{equation}
		We extend the definition of $\dist_{[j]}$ by requiring \eqref{eq: dist j = 1/2 dist j+1} for all nodes $A,B\in\nodes_{[j+1]}$.
		\item The recursive application of step 2 yields for any $j\in\nat$ and $A,B\in\nodes_{[j]}$ that 
		\[\dist_{[0]}(A,B)= 2^{-j}\dist_{[j]}(A,B)~,\]
		which defines a metric on $\bigcup_{j\in\mathbb N_0}\nodes_{[j]}$, this is, on all nodes of all successive dyadic refinements of the initial mesh $\mesh_{[0]}$. 
		This set of nodes is dense in $\Omega$. The continuous extension defines a metric $\dist$ on $\Omega$, which coincides with the piecewise bilinear interpolation of $\dist_{[0]}$. We denote this metric by $\dist(\mathbf{a},\mathbf{b})$ for $\mathbf{a},\mathbf{b}\in\Omega$.
	\end{enumerate}
	On a uniform structured grid of unit squares, this metric corresponds to the $\infty$-norm. However, $\dist$ is in general neither homogeneous nor invariant under translation and hence does not correspond to a norm, see Figure~\ref{fig: mesh metric} for an illustration.
\end{df}
\begin{figure}[t]
	\centering
	\raisebox{.0125\textwidth}{\includegraphics[width=.3\textwidth]{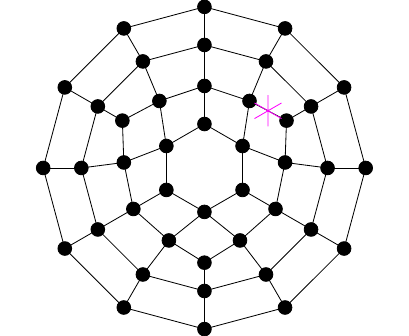}}
	\includegraphics[width=.35\textwidth]{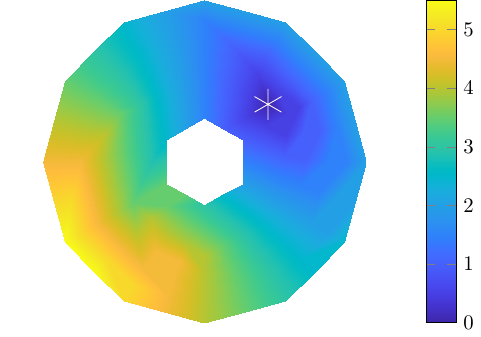}
	\caption{Example for the distance to an indicated point with respect to $\dist$.}
	\label{fig: mesh metric}
\end{figure}

\begin{df}[$k$-disk]
	For $k\in\nat$ and any node $\node\in\nodes$ of a mesh $\mesh$, the \emph{$k$-disk} $\dc_k(\node)$ \emph{around $\node$} is defined recursively, where we have $\dc_1(\node) = \elements(\node)$ and 
	define
	\[
	\nodes(\dc_{k-1}(\node)) = \bigcup_{\element' \in \dc_{k-1}(\node)} \nodes(\element').
	\]
	and
	\[
	\dc_k(\node) = \bigcup_{\node'\in\nodes(\dc_{k-1}(\node))}\elements(\node'),
	\]
	for $k \geq 2$.
\end{df}
Thus, on the initial, regular mesh $\mesh_0$, the $k$-disk around $\node$ is the set of all elements within a distance to $\node$ smaller or equal to $k$. 

\begin{figure}[t]
	\centering
	\begin{tikzpicture}[baseline=0]
		\draw (0:0) +(90:2) coordinate (t) +(162:2) coordinate (ol) +(18:2) coordinate (or) +(-54:2) coordinate (ur) +(-126:2) coordinate (ul)
		(t)--(ol)--(ul)--(ur)--(or)--(t)
		($.5*(t)$) -- ($.5*(ol)$) -- ($.5*(ul)$) -- ($.5*(ur)$) -- ($.5*(or)$) node[circle, fill=blue, inner sep=2pt]{} -- ($.5*(t)$)
		(0,0) -- ($.5*(t) + .5*(ol)$) (0,0) -- ($.5*(ul) + .5*(ol)$) (0,0) -- ($.5*(ul) + .5*(ur)$) (0,0) -- ($.5*(or) + .5*(ur)$) (0,0) -- ($.5*(t) + .5*(or)$)
		($.5*(t)$) -- ($.75*(t) + .25*(ol)$)  ($.5*(t)$) -- ($.75*(t) + .25*(or)$)
		($.5*(ol)$) -- ($.25*(t) + .75*(ol)$)  ($.5*(ol)$) -- ($.75*(ol) + .25*(ul)$)
		($.5*(ul)$) -- ($.75*(ul) + .25*(ol)$)  ($.5*(ul)$) -- ($.75*(ul) + .25*(ur)$)
		($.5*(ur)$) -- ($.75*(ur) + .25*(ul)$)  ($.5*(ur)$) -- ($.75*(ur) + .25*(or)$)
		($.5*(or)$) -- ($.75*(or) + .25*(ur)$)  ($.5*(or)$) -- ($.75*(or) + .25*(t)$);
	\end{tikzpicture}\quad
	\begin{tikzpicture}[baseline=0]
		\tikzset{every node/.style={inner sep=1pt, fill=white}}
		\draw (0:0) +(90:2) coordinate (t) node {3}  +(162:2) coordinate (ol) node {3} +(18:2) coordinate (or) node {1} +(-54:2) coordinate (ur) node {3} +(-126:2) coordinate (ul) node {3} 
		(t)--(ol)--(ul)--(ur)--(or)--(t)
		($.5*(t)$) node {2} -- ($.5*(ol)$) node {2} -- ($.5*(ul)$) node {2} -- ($.5*(ur)$) node {2} -- ($.5*(or)$)  node {0}  -- ($.5*(t)$)
		(0,0) node {1}  -- ($.5*(t) + .5*(ol)$) node {3} (0,0) -- ($.5*(ul) + .5*(ol)$) node {3} (0,0) -- ($.5*(ul) + .5*(ur)$) node {3} (0,0) -- ($.5*(or) + .5*(ur)$) node {1} (0,0) -- ($.5*(t) + .5*(or)$) node {1}
		($.25*(t) + .25*(ol)$) node {2} ($.25*(ul) + .25*(ol)$) node {2} ($.25*(ul) + .25*(ur)$) node {2} ($.25*(or) + .25*(ur)$) node {1} ($.25*(t) + .25*(or)$) node {1} 
		($.5*(t)$) -- ($.75*(t) + .25*(ol)$) node {3}  ($.5*(t)$) -- ($.75*(t) + .25*(or)$) node {2}
		($.5*(ol)$) -- ($.25*(t) + .75*(ol)$) node {3}  ($.5*(ol)$) -- ($.75*(ol) + .25*(ul)$) node {3}
		($.5*(ul)$) -- ($.75*(ul) + .25*(ol)$) node {3}  ($.5*(ul)$) -- ($.75*(ul) + .25*(ur)$) node {3}
		($.5*(ur)$) -- ($.75*(ur) + .25*(ul)$) node {3}  ($.5*(ur)$) -- ($.75*(ur) + .25*(or)$) node {2}
		($.5*(or)$) -- ($.75*(or) + .25*(ur)$) node {1}  ($.5*(or)$) -- ($.75*(or) + .25*(t)$) node {1}   ;
	\end{tikzpicture}\quad
	\begin{tikzpicture}[baseline=0]
		\draw (0:0) +(90:2) coordinate (t) +(162:2) coordinate (ol) +(18:2) coordinate (or) +(-54:2) coordinate (ur) +(-126:2) coordinate (ul);
		\fill[blue!10] (t)--(ol)--(ul)--(ur)--(or)--(t);
		\fill[blue!30] ($.5*(ul)$) -- ($.5*(ur)$) -- ($.75*(ur) + .25*(or)$) -- (or) -- ($.75*(t) + .25*(or)$) -- ($.5*(t)$) -- ($.5*(ol)$) -- ($.5*(ul)$);
		\fill[blue!80] (0,0) -- ($.5*(or) + .5*(ur)$) -- (or) -- ($.5*(t) + .5*(or)$) -- (0,0);
		\draw (t)--(ol)--(ul)--(ur)--(or)--(t)
		($.5*(t)$) -- ($.5*(ol)$) -- ($.5*(ul)$) -- ($.5*(ur)$) -- ($.5*(or)$) -- ($.5*(t)$)
		(0,0) -- ($.5*(t) + .5*(ol)$) (0,0) -- ($.5*(ul) + .5*(ol)$) (0,0) -- ($.5*(ul) + .5*(ur)$) (0,0) -- ($.5*(or) + .5*(ur)$) (0,0) -- ($.5*(t) + .5*(or)$)
		($.5*(t)$) -- ($.75*(t) + .25*(ol)$)  ($.5*(t)$) -- ($.75*(t) + .25*(or)$)
		($.5*(ol)$) -- ($.25*(t) + .75*(ol)$)  ($.5*(ol)$) -- ($.75*(ol) + .25*(ul)$)
		($.5*(ul)$) -- ($.75*(ul) + .25*(ol)$)  ($.5*(ul)$) -- ($.75*(ul) + .25*(ur)$)
		($.5*(ur)$) -- ($.75*(ur) + .25*(ul)$)  ($.5*(ur)$) -- ($.75*(ur) + .25*(or)$)
		($.5*(or)$) -- ($.75*(or) + .25*(ur)$)  ($.5*(or)$) -- ($.75*(or) + .25*(t)$);
	\end{tikzpicture}
	
	\caption{Left: example of a regular mesh with one node $v$ marked.
		Center: distance of each node to $v$.   Right: 1-disk (blue), 2-disk (blue and light blue), and 3-disk (blue, light blue and very light blue) of $v$.}
\end{figure}
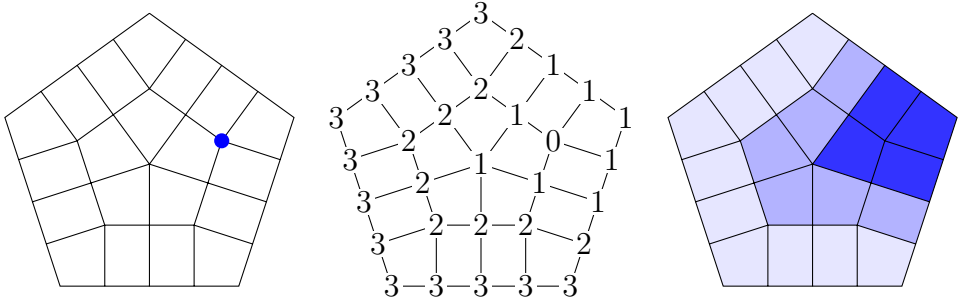

\section{T-spline spaces}
\label{sec: Manifold splines}

We recall below the notion of splines over general T-meshes and related definitions.
Section~\ref{subsec: high-dim struct T-splines} recalls the construction  of T-splines over structured meshes in arbitrary dimensions
and Section~\ref{subsec: unstruct T-splines} introduces a construction of T-splines over unstructured quadrilateral meshes.

\subsection{Spline spaces over T-meshes}\label{subsec: Spline spaces over T-meshes}

We consider a two-dimensional mesh $\mesh=(\elements,\edges,\nodes)$ or a more general mesh as in Remark~\ref{rem: other 2D meshes}. Over such a mesh we want to define splines, that is, piecewise polynomial functions with certain prescribed continuity. Having given parameter domains for all elements and mappings as introduced in Section~\ref{subsec: T-meshes} that relate parameter domains across edges, we can define polynomials on each element and continuity of functions across edges.

Using the mapping $\mapping{\element}$ over the parameter domain~$\pullback{\element}$, we can define polynomials on the mesh element~$\element$. 
\begin{df}[polynomial on an element]
	Let $\element \in \elements$ and let $p\in \nat_0$. A function $\varphi: \element\rightarrow\real$ is called \emph{polynomial} of bi-degree $(p,p)$ if $\varphi \circ \mapping{\element} \in \mathbb{P}^{(p,p)}$. In short, we denote this by $\varphi \in \mathbb{P}^{(p,p)}(\element)$.
\end{df}
\begin{df}[spline on a mesh]
	A function $\varphi:\Omega\rightarrow\real$ is a \emph{continuous, piecewise polynomial spline} of bi-degree $(p,p)$ if $\varphi$ is a polynomial of bi-degree $(p,p)$ on all elements $\element \in \elements$ and $\varphi \in C^0 (\Omega)$.
\end{df}
Continuity of a function on $\Omega$ can be defined directly on the domain itself. However, differentiability (and higher-order continuity) depends on the underlying mesh and embedding. Thus, we define it on the parameter domains of structured submeshes.
\begin{df}[continuity across an edge]\label{df: continuity edge}
	Let $\edge\in\interioredges$ be an interior edge of the mesh and let $\elements(\edge)$ contain the two neighboring elements. A function $\varphi:\domain{\elements(\edge)}\rightarrow\real$ is $C^k$-smooth across the edge $\edge$, if $\varphi \circ \mapping{\elements(\edge)} \in C^k(\domain{\pullback{\elements(\edge)}})$. In short, we denote this by $\varphi \in C^k(\edge)$.
\end{df}
Thus, a function is $C^k$-smooth across an edge of the mesh if its pullback is $C^k$-smooth in the parameter domain. For each integer $k\geq 0$ and for each structured submesh $\mesh^*$, we can analogously define the space $C^k(\domain{\mesh^*})$.

\begin{df}[spline space over a T-mesh]\label{df: spline space T-mesh}
	Let $\mesh=(\elements,\edges,\nodes)$ be a T-mesh over $\Omega$, $p\in \nat_0$ be a prescribed polynomial degree and $\kk:\edges \rightarrow \nat_0$ be a function associating an order of continuity to every edge of the mesh. The spline space $\splines^p(\mesh,\kk)$ is defined as 
	\[
	\splines^p(\mesh,\kk) = \left\{ \varphi : \Omega \rightarrow \real \mid \begin{array}{ll}
		\varphi \in \mathbb{P}^{(p,p)}(\element) &\mbox{ for each } \element\in\elements \mbox{ and } \\
		\varphi \in C^{\kk(\edge)}(\edge) &\mbox{ for each } \edge\in\edges
	\end{array} \right\}.
	\]
\end{df}
Certain constructions for bases on splines over T-meshes can be found in~\cite{STV:2016}. 
Note that the above construction can be further generalized using meshes defined on so-called parameter manifolds. This generalization is briefly covered in Section~\ref{sec: manifold interpretation} in the appendix.

In the following, we introduce T-splines over T-meshes. First, we consider extensions to higher dimensions and, in a second step, unstructured T-splines accounting for the occurrence of extraordinary nodes. The T-mesh serves as a control structure of the T-spline space. 
The resulting T-splines are not piecewise polynomials over the control T-mesh, but over the so-called B\'ezier mesh, which is another T-mesh obtained from the control T-mesh through refinement. This is further explained below.
We restrict ourselves to constructions of odd degree $p$, where, for structured T-meshes, the nodes of the mesh are anchors of the T-splines. Each anchor corresponds to one T-spline basis function. 
For even and mixed degree, see \cite{BBSV:2013,Goermer:Morgenstern:2021}.
\begin{df}[B\'ezier mesh]
	The mesh $\bmesh = (\belements,\bedges,\bnodes)$ is the coarsest partition of the domain $\Omega$ such that all T-splines defined over the control T-mesh $\mesh$ are piecewise polynomials on $\belements$. 
\end{df}

The B\'ezier mesh $\bmesh$ depends on the polynomial degree $p$ of the spline space. The control mesh and B\'ezier mesh differ only near T-nodes and I-nodes. If $\mesh$ is regular, then $\bmesh=\mesh$.
For the class of meshes generated by our refinement scheme, the elements of the B\'ezier mesh can be computed with Algorithm~\ref{alg: bezier mesh}.

\begin{algorithm}
	\caption{B\'ezier mesh}\label{alg: bezier mesh}
	\begin{algorithmic}[1] 
		\Procedure{Bezier}{$\mesh$} 
		\For {$j=1,\dots,\ceilfrac p2$}
		\State {$\elements_\top\gets\{\element\in\elements\mid\#\edges(\element)=\#\nodes(\element)=5\}$}
		\Comment{these are all elements that neighbor T- or I-nodes, see also Remark~\ref{rem: tj-exts far away} for details}
		\ForAll {$\element\in\elements_\top$}
		\State $\edge\gets$ unique edge in $\edges(\element)$ that has two opposite edges
		\State $\mesh\gets\subdiv(\mesh,\edge)$ \Comment {see Algorithm~\ref{alg: subdiv}}
		\EndFor
		\EndFor
		\State \textbf{return} $\mesh$
		\EndProcedure
	\end{algorithmic}
\end{algorithm}

\subsection{Higher-dimensional structured T-spline basis}\label{subsec: high-dim struct T-splines}
In this subsection, we explain the construction of T-splines in meshes that consist of axis-aligned $d$-dimensional boxes.
For details, we refer the reader to \cite[Section 5.3]{Morgenstern:2017}. Note that such meshes are composed of $d$-dimensional elements, $(d-1)$-dimensional faces, \ldots, $1$-dimensional edges, and $0$-dimensional nodes. While the elements, faces, and nodes play important roles for the definition of spline spaces of odd degree, the other mesh elements do not need to be defined rigorously. We emphasize that the definitions in Sections~\ref{sec: Preliminaries} and~\ref{subsec: Spline spaces over T-meshes} can be naturally extended to dimensions $d > 2$ and we include a superscript $d$ in such cases to distinguish higher-dimensional meshes and two-dimensional ones.

\begin{df}[uniform meshes]
	For each level $\ell=k+\tfrac jd$, with $k\in\nat$ and $j\in\{0,\dots,d-1\}$, we define the elements of the $d$-dimensional tensor-product mesh $\meshuni{\ell}$ as
	\begin{multline}
		\elementuni{\ell}\sei\Bigl\{(x_1-2^{-k-1},x_1)\times\dots\times(x_j-2^{-k-1},x_j)\times(x_{j+1}-2^{-k},x_{j+1})\times\dots\times(x_d-2^{-k},x_d)
		\\
		\begin{alignedat}[b]{2}
			\mid 2^{k+1} x_i&\in\{-2^{k+1}p+1,\dots,2^{k+1} p\}&&\text{ for }i=1,\dots,j
			\\
			\text{and}\enspace2^k x_i&\in\{-2^kp+1,\dots,2^k p\}&&\text{ for }i=j+1,\dots,d\Bigr\}
		\end{alignedat}
	\end{multline}
	with $n_1,\dots,n_d\in\nat$. We moreover denote by $\edgeuni{\ell}$ the $(d-1)$-dimensional faces between elements and by $\nodeuni{\ell}$ the vertices of all elements in $\elementuni{\ell}$;
	see also Figure~\ref{fig: uniform meshes} for an illustration.
	Note that we intentionally choose the domain $(-p,p)^d$ here since we explicitly require such a domain in Section~\ref{subsec: unstruct T-splines}.
\end{df}

\begin{figure}[t]
	\centering
	\includegraphics{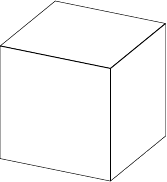}\quad
	\includegraphics{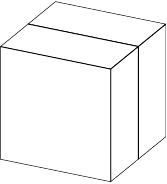}\quad
	\includegraphics{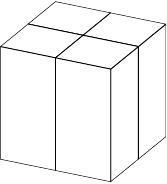}\quad
	\includegraphics{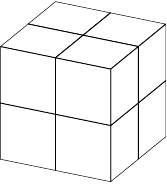}
	\caption{Example for uniform meshes $\meshuni0$, $\meshuni{\mfrac13}$, $\meshuni{\mfrac23}$, and $\meshuni1$, with $\meshuni0$ consisting of a~single element, $d=3$.}
	\label{fig: uniform meshes}
\end{figure}

The class of $d$-dimensional T-meshes $\mesh^d$ considered below is the set of all meshes where $\elements^d$ consists of finitely many elements from uniform meshes as above with possibly different levels such that any two elements of $\elements^d$ are disjoint and the union of all elements of $\elements^d$ and their corresponding edges and vertices is the same domain $(-p,p)^d$ that is covered by each uniform mesh.

\begin{df}[skeleton]
	Given a mesh $\mesh^d$ with elements $\elements^d$, denote the union of all (closed) element faces that are orthogonal to the first dimension by $\sk_1(\mesh^d)\sei\tbigcup_{\Q\in\elements^d}\sk_1(\Q)$, with 
	\begin{equation*}\begin{aligned}[b]
			\sk_1(\Q) &\sei \{x_1,x_1+\tilde x_1\}\times [x_2,x_2+\tilde x_2]\times\dots\times [x_d,x_d+\tilde x_d]\\
			\text{for any } \Q &= (x_1,x_1+\tilde x_1)\times\dots\times (x_d,x_d+\tilde x_d)\in\elements^d.
	\end{aligned}\end{equation*}
	We call $\sk_1(\mesh^d)$ the \emph{1-orthogonal skeleton}. Analogously, we denote the $j$-orthogonal skeleton by $\sk_j(\mesh^d)$ for all $j=1,\dots,d$. 
	Similarly to the above Definition, we abbreviate $\sk_j\sei\sk_j(\mesh^d)$ if only one mesh $\mesh^d$ is considered in the respective context.
	Note that $\sk_1\cap\dots\cap\sk_d=\bigcup\nodes^d$. 
\end{df}

\begin{figure}[t]
	\centering
	\includegraphics[width=.275\textwidth]{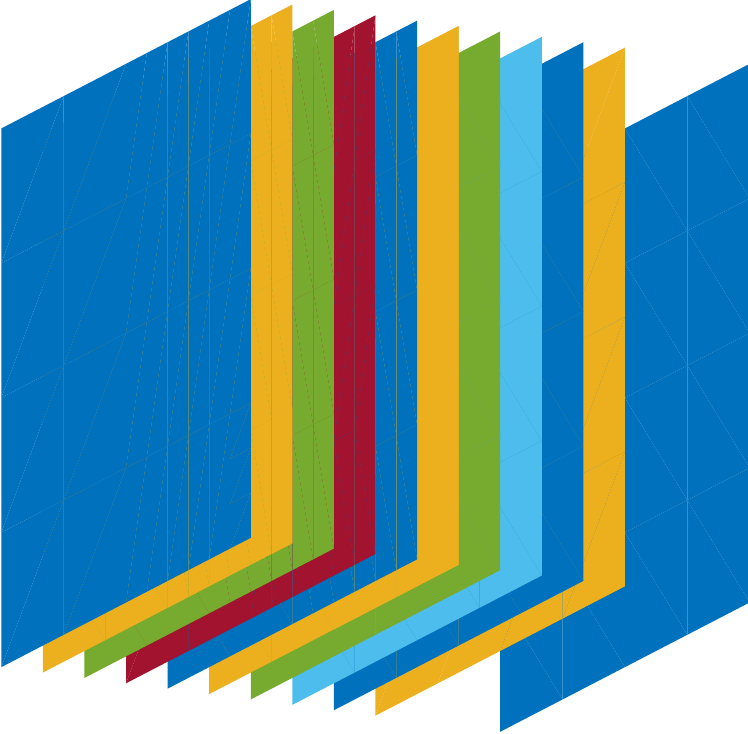}\hspace{.05\textwidth}
	\includegraphics[width=.275\textwidth]{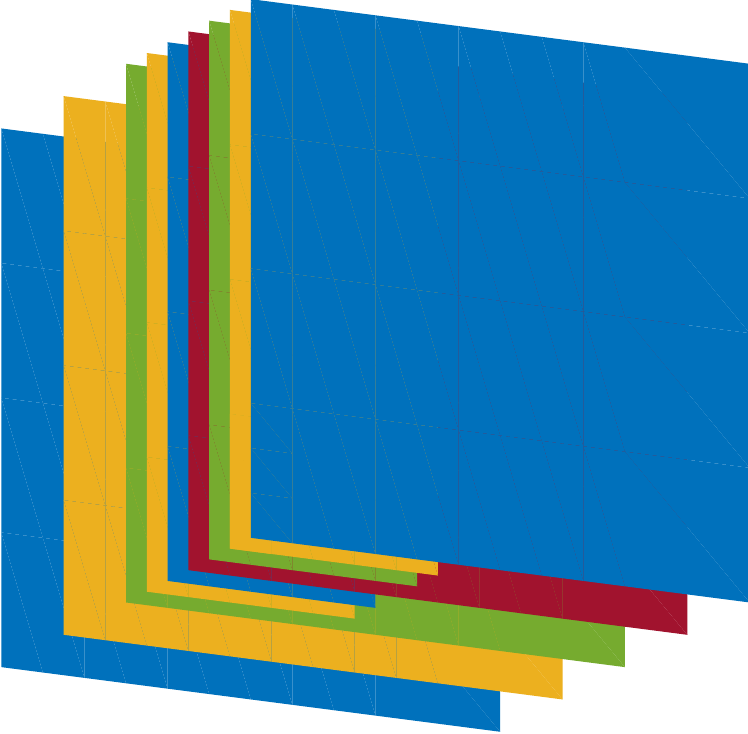}\hspace{.05\textwidth}
	\includegraphics[width=.275\textwidth]{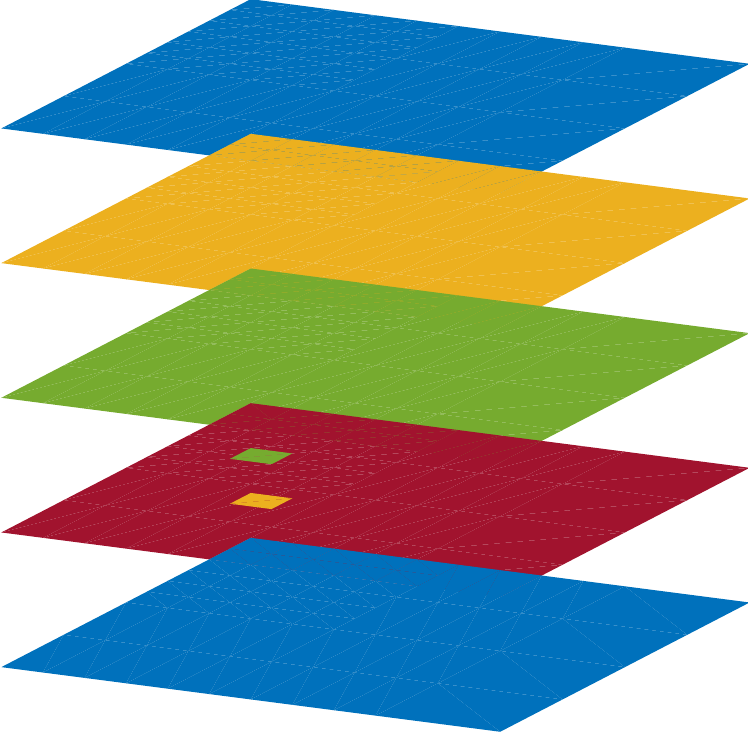}
	\caption{1-orthogonal, 2-orthogonal and 3-orthogonal skeleton of a 3D cube refined in the front corner, taken from \cite{Morgenstern:2016}.}
	\label{tspnd.fig: Xi_xyx_d}
\end{figure}

\begin{df}[global index sets]
	For any $\node=(x_1,\dots,x_d)\in\real^d$ and $j\in\{1,\dots,d\}$, we define
	\begin{multline}
		\KV_j(\node) \sei \bigl\{z\in[-p,p]\mid(x_1,\dots,x_{j-1},z,x_{j+1},\dots,x_d)\in\sk_j\bigr\}
		\\\cup
		\bigl\{-p -\ceilfrac{p}2,\dots,-p-1,p+1,\dots, p+\ceilfrac{p}2\bigr\}.
	\end{multline}
\end{df}

\begin{df}[local index vectors]\label{tspnd.df: local index vector}
	To each node $\node\in\nodes^d$ and each dimension $j=1,\dots,d$, we associate  a local index vector $\kv_j(\node)\in\real^{p+2}$, which consists of the unique $p+2$ consecutive elements in $\KV_j(\node)$ having {the $j$-th component of $\node$} as their $\mfrac{p+3}2$-th (this is, the middle) entry.
\end{df}
\begin{df}[B-spline]
	Given a local index vector $\kv_j(\node)=(k_1,\dots,k_{p+2})$, we denote by $B_{\kv_j(\node)}$ the standard 1D B-spline of degree $p$ that corresponds to the knots $k_1,\dots,k_{p+2}$.
\end{df}

\begin{df}[T-spline]\label{df: T-spline}
	We associate to each node $\node\in\nodes^d$ a multivariate B-spline, referred to as {T-spline}, defined as the product of the B-splines on the corresponding local index vectors, \begin{equation*}B_\node(x_1,\dots,x_d)\coloneqq B_{\kv_1(\node)}(x_1) \cdots B_{\kv_d(\node)}(x_d).\end{equation*}
\end{df}
We emphasize that given a structured $d$-dimensional T-mesh $\mesh^d$ with elements $\elements^d$, faces $\edges^d$, and nodes $\nodes^d$, the functions $B_\node$, for $\node \in \nodes^d$, are piecewise polynomials of degree $p$ in each direction over $\textsc{Bezier}(\elements^d)$ and $C^{p-1}$-smooth across all element interfaces $\edge \in \textsc{Bezier}(\edges^d)$. 

In the following, we use such (classical) T-splines based on tensor-product meshes to construct appropriate spline spaces over unstructured, two-dimensional T-meshes. While we use two-dimensional tensor-product splines as basis functions in structured regions, we use $k$-dimensional tensor-product splines to construct basis functions around extraordinary nodes of valence $k$.

\subsection{Unstructured T-spline spaces}\label{subsec: unstruct T-splines}

Throughout this subsection, we consider a two-dimensional T-mesh $\mesh$ as a control structure together with its B\'ezier mesh $\bmesh$. In order to simplify the construction, we suppose the following.
\begin{assu}\label{assu:extraordinarynodes}
	Let $\node\in\nodes$ be an extraordinary node of the mesh. We assume that all nodes $\node' \in \nodes$, with $\node' \neq \node$, that lie inside the {$(3p+1)/2$}-disk of $\node$, i.e., $\node'\in \nodes(\dc_{{(3p-1)/2}}(\node))$, are regular{, interior nodes}. Moreover, we assume that the $p$-disks of two extraordinary nodes do not overlap, i.e., for all $\node_1,\node_2\in\nodes$ with $\node_1\neq\node_2$ we have $\dc_{p}(\node_1) \cap \dc_{p}(\node_2) = \emptyset$.
\end{assu}
{The first condition implies that no knot line extensions extend into the $p$-disk of an extraordinary node and that all extraordinary nodes are sufficiently far away from the boundary. Thus, all elements in the $p$-disk of $\node$ are also elements of the B\'ezier mesh $\bmesh$, i.e., $\dc_{p}(\node) \subseteq \belements$. The second condition guarantees, that the supports of functions corresponding to different extraordinary nodes do not overlap.} These assumptions are not essential for the theory presented in this paper, but significantly facilitate technical details. 
It might be necessary to refine the initial mesh in order to guarantee that extraordinary nodes are sufficiently separated. A construction of analysis-suitable unstructured T-splines over more general meshes, which do not satisfy Assumption~\ref{assu:extraordinarynodes}, is presented in~\cite{WLQHZC:2021}. However, there the construction of functions around extraordinary nodes differs from the approach presented here.

Recall the unstructured spline space of Definition~\ref{df: spline space T-mesh}, which is piecewise polynomial of a certain degree $p$ over a given mesh and satisfies continuity conditions across edges as prescribed by a function $\kk$ defined on the edges of the mesh. 
To describe the desired spline space properly, we need the following definition.
\begin{df}[edge prolongation]
	For an edge $\edge\in\edges$, we define the \emph{edge prolongation} of $\edge$ as the set of edges that share a node but not an element with $\edge$, plus $\edge$ itself, i.e., 
	\begin{equation*}
		\ep(\edge) \sei \{\edge' \in \edges \,\vert\, \nodes(\edge)\cap \nodes (\edge') \neq \emptyset \wedge \elements(\edge)\cap \elements(\edge') = \emptyset \}\cup \{\edge\}
	\end{equation*}
	and for a set of edges $\cS \subseteq \edges$
	\begin{equation*}
		\ep(\cS) \sei \bigcup \{\ep(\edge)\,\vert\, \edge \in \cS\}.
	\end{equation*}
	Further, the \emph{edge prolongation of order} $i$ is defined as
	\begin{equation*}
		\ep^i(\cS) \sei \ep^{i-1}(\ep(\cS)) \text{ for }i > 1, \quad \ep^1(\cS) \sei \ep(\cS) \text{ for }i = 1.
	\end{equation*}
\end{df}

In the following, we aim to construct unstructured T-splines from the following space.
\begin{df}[unstructured spline space]\label{df: unstructured spline space}
	The unstructured spline space of degree $p$ over the mesh $\bmesh$ is given as 
	\begin{equation*}
		\splines^p(\bmesh,\kspecial),
	\end{equation*}
	where the function $\kspecial$ prescribing the continuity across edges of $\bedges$ takes the form
	\begin{equation*}
		\kspecial(\edge) = \begin{cases} 0 &\text{if $\edge$ is in $\ep^{p-1}(\edges(\node))$ for an extraordinary node $\node$,} \\ p-1 &\text{otherwise}\end{cases}
	\end{equation*}
	for all $\edge\in\bedges$.
\end{df}
Due to $\edges\subseteq\bedges$, we have $\ep^{p-1}(\edges(\node)) \subset \bedges$, so the edges are all edges of the B\'ezier mesh as well and $\kspecial$ is thus well-defined for all edges. The choice of the continuity function $\kspecial$ for the space $\splines^p(\bmesh,\kspecial)$ corresponds to edges in $\mathrm{ep}^{(p-1)/2}(\edges(\node))$ having multiplicity $p$, when interpreted in classical T-spline notation, while all other edges have multiplicity $1$. Then the resulting B\'ezier mesh is $C^0$ across all edges in $\ep^{p-1}(\edges(\node))$ and $C^{p-1}$ across all other edges.

Providing a basis for the spline space of Definition~\ref{df: unstructured spline space} may not be feasible for general meshes. Thus, we construct a basis for a subspace. We split the construction as follows.
We define spline basis functions with support inside the $p$-disk 
of any extraordinary node via traces of multivariate tensor-product spline basis functions on a higher-dimensional structured grid (cf.\ Definition~\ref{df: T-spline} with $d>2$). Basis functions that do not have their support included in the $p$-disk of an extraordinary node
(i.e., basis functions that are associated with nodes outside the $(p+1)/2$-disk of any extraordinary node) are defined via standard bivariate T-splines (cf.\ Definition~\ref{df: T-spline} with $d=2$).

\begin{df}[T-splines over an unstructured T-mesh]\label{df: T-splines over an unstructured T-mesh}
	Let $\mesh=(\elements,\edges,\nodes)$ be a T-mesh. Let $\enodes \subset \nodes$ be the set of all extraordinary nodes. We define the set of anchors as
	\[
	\anchors = \nodes \setminus \bigcup_{\node \in \enodes {\cup \boundarynodes}} \{ \node' \in \nodes : \node' \subset \domain{\dc_{{(p+1)/2}}(\node)}\}.
	\]
	For each anchor $\node\in\anchors$, we define a basis function $B_\node$ as in Definition~\ref{df: structured T-spline}. For each extraordinary node $\node \in \enodes$, we define a set of functions $\cB_\node$ as in Definition~\ref{df: basis functions}. This yields a global set of functions
	\[
	\cB = \Big(\bigcup_{\node \in\enodes} \cB_\node\Big) \cup \{ B_\node : \node\in\anchors \}.
	\]
\end{df}
The functions given in Definition~\ref{df: basis functions} are well-defined due to Assumption~\ref{assu:extraordinarynodes}. {As a result of Definition~\ref{df: T-splines over an unstructured T-mesh}, there is a layer of elements around the boundary~$\partial\Omega$, where the spline space is not complete, cf.~Theorem~\ref{thm:polynomial-reproduction}.} The functions given in  Definition~\ref{df: structured T-spline} for all refinements of a regular mesh $\mesh_0$ are well-defined if the following is satisfied.
\begin{assu}\label{assu:disk-around-anchors}
	Let $\node\in\anchors$ be an anchor of the initial mesh $\mesh_0$. Then the elements in the $\frac{p+1}{2}$-disk $\dc_{(p+1)/2}(\node)$ form a structured submesh $\mesh_\node$, which possesses the parameter domain $\domain{\pullback{\mesh_\node}} = \left(-\frac{p+1}{2},\frac{p+1}{2}\right)^2$, where $\pullback{\node} = (0,0)^T$. Moreover, we assume that for any two anchors $\node,\node'\in\anchors$ the union $\dc_{(p+1)/2}(\node) \cup \dc_{(p+1)/2}(\node')$ forms a structured submesh as well.
\end{assu}
On the initial mesh the support of the function corresponding to $\node\in\anchors$ is given by $\dc_{(p+1)/2}(\node)$. The second part of the assumption guarantees that the union of supports of two functions is contained inside a structured submesh. Consequently, the same is true for any two functions on refined meshes.

The functions in $\cB$ are all linearly independent, see Theorem~\ref{thm: linear-idependence}, and they satisfy
\begin{equation}\label{eq:subspace}
	\mathrm{span} (\cB) \subseteq \splines^p(\bmesh,\kspecial).
\end{equation}
This construction is particularly useful in the context of refinement since refinement routines for structured grids are applicable. Note that we do not necessarily require equality in~\eqref{eq:subspace}. Although we have equality when the spaces are restricted to functions with support either completely outside the $p$-disk of an extraordinary node (by classical T-spline arguments) or completely inside (cf.~Proposition~\ref{prop: spline-space-extraordinary-node}), this does not need to be true in the transition domain. However, we do not explicitly demand equality in our construction and instead profit from its simplicity.

We emphasize that one can extend the definition of T-splines from unstructured T-meshes over planar partitions to mapped isogeometric functions. This is explained in more detail in~\ref{sec: geometry}. 

For all anchors inside the structured parts of the T-mesh we have the following.
\begin{df}[structured T-spline]\label{df: structured T-spline}
	Let $\node\in\anchors$ be an anchor of the mesh $\mesh$. Then the function 
	\[
	B_{\node} : \Omega \rightarrow \real
	\]
	has support in a structured submesh $\mesh_\node$ of $\mesh$ and is on its support defined as
	\[
	B_{\node} \circ \mapping{\elements_\node} = B_{\pullback{\node}},
	\]
	where $\mapping{\elements_\node}: \domain{\pullback{\mesh_\node}} \rightarrow \domain{\mesh_\node}$. Moreover, we have $\pullback{\mesh_\node}$ being a structured two-dimensional T-mesh and $\pullback{\node}$ being a node of that structured T-mesh.
\end{df}
Note that it follows from Assumption~\ref{assu:extraordinarynodes} and from the definition of the T-mesh refinement that such a submesh $\mesh_\node$ exists for all $\node\in\anchors$.

Next we define the basis functions around extraordinary nodes.
As a first step of the construction, we define a localized embedding of the unstructured mesh into a higher-dimensional structured grid, which is based on the following observation: 
the $p$-disk $\dc_{p}(\node)$ consists of $k$ non-overlapping structured submeshes $\{\mesh_i\}_{i=1}^k$,
in counterclockwise enumeration around $\node$,
that are separated by the prolongated edge-neighborhood of $\node$, given by $\ep^{p-1}(\edges(\node))$. That is, for $i \neq j$, we have that
\begin{equation}
	\domain{\mesh_i}\cap\domain{\mesh_j}=\emptyset \quad\text{and}\quad
	\overline{\domain{\mesh_i}} \cap \overline{\domain{\mesh_j}} \subset \overline{\ep^{p-1}(\edges(\node))}.\label{eq:structured-submeshes}
\end{equation}
Since by Assumption~\ref{assu:extraordinarynodes}, all nodes in $\nodes_1,\dots,\nodes_k$ except $\node$ are regular, each of the submeshes $\mesh_1,\dots,\mesh_k$ is formed by a regular grid of $p\times p$ elements.
\begin{df}[higher-dimensional embedding]\label{df: embedding}
	Let $\node$ be an extraordinary node with valence $v_{\node} = k$, and suppose that Assumption~\ref{assu:extraordinarynodes} holds.
	Then there exists an embedding $\emb^k\colon \domain{\dc_p(\node)}\to [0,p)^k\subset\real^k$ that maps the elements, edges, and nodes of $\dc_{p}(\node)$ onto faces, edges and nodes $(\elements^k,\edges^k,\nodes^k)$ of the $k$-dimensional structured mesh $\mesh^d$ with nodes in $\integer^k$ and elements in \begin{equation*}
		\bigl\{\bigtimes_{j=1}^k(z_j,z_j+1)\mid z_1,\dots,z_k\in\integer\bigr\}.
	\end{equation*}
	The embedding $\emb^k$ is characterized by the following choices. 
	\begin{itemize}
		\item $\emb^k(\node) = (0,\dots,0)\in\real^k$.
		\item For $j=0,\dots,k-1$, $\nodes_j$ is mapped to 
		$\bigl\{ae_j+be_{j+1}=(0,\dots,0,a,b,0,\dots,0)\mid a,b\in\{0,\dots,p\}\bigr\}$,
		with $e_j$ the $j$-th unit vector,  $\nodes_0=\nodes_k$ and $e_0=e_k$.
		\item For $j=1,\dots,k$, the edge $\edge_j\in\edges(\node)\cap\edges_j\cap\edges_{j-1}$ is mapped to 
		$\emb^k(\edge_j) = \bigl\{ae_j\mid 0<a<1\bigr\}$,
		with $\edges_0=\edges_k$.
		\item The remaining edges in the prolongated edge neighborhood $\edge_j\in\ep^{p-1}(\edges(\node))\cap\edges_j\cap\edges_{j-1}$, for $j=1,\dots,k$ and $\edges_0=\edges_k$, 
		are mapped to $\emb^k(\edge_j) = \bigl\{ae_j\mid i<a<i+1\bigr\}$,
		for $i\in\{1,\dots,p-1\}$ depending on $\edge_j$ such that $\emb^k$ is continuous.
		\item The elements $\element_j\in\elements_j$ are mapped to
		$\emb^k(\element_j) = \bigl\{ae_j+be_{j+1}\mid h<a<h+1,\ i<b<i+1\bigr\}$
		for $h,i\in\{1,\dots,p-1\}$ depending on $\element_j$ such that $\emb^k$ is continuous.
	\end{itemize}
	Together, this defines a unique map $\emb^k$ (up to rotated enumeration of the submeshes $\mesh_1,\dots,\mesh_k$) and we set
	$(\elements^k,\edges^k,\nodes^k) = \bigl(\emb^k(\dc_p(\node)), \emb^k(\edges(\dc_p(\node))), \emb^k(\nodes(\dc_p(\node)))\bigr).$ 
\end{df}

As a next step, we define a spline space on the $p$-disk $\dc_{p}(\node)$ of an extraordinary node $\node$ by employing the higher-dimensional representation of the mesh. Note that, as explained in Section~\ref{subsec: high-dim struct T-splines}, the higher-dimensional mesh $\mesh^k$ induced by $\integer^k$ allows for a classical definition of T-splines by means of a tensor-product construction. In this case the space is actually a standard, $k$-variate tensor-product B-spline space. For a clear distinction between bivariate splines associated with the unstructured mesh and multivariate splines associated with $\mesh^k$, we denote the latter T-spline basis functions corresponding to a node $\node=\{\nodeinRd\}$, $\nodeinRd\in\integer^k$, by $B_{\node}^k\colon\real^k \to \real$. The spline space in the original two-dimensional unstructured mesh is then constructed as follows.

\begin{df}[local T-spline space for an extraordinary node]\label{df: spline space extraordinary node}
	Let $r=(p-1)/2$ and let $\node$ be an extraordinary node and $\emb^k$ its embedding into the $k$-dimensional mesh $\mesh^k$. Then, we define the set of unstructured splines around $\node$ by
	\begin{equation}\label{eq:unstructuredSplineSpace}
		\mathcal{S}_\mathrm{u}(\node)\sei \mathrm{span}\big\{ B_{\node'}^k\circ \emb^k:\domain{\dc_{p}(\node)} \rightarrow \real,\quad \node' \in ([-r,r]\cap\integer)^k \big\}.
	\end{equation}
	That is, these functions are defined as traces of higher-dimensional structured spline functions.
\end{df}

\begin{rem}[Choice of the local T-spline space]
	We emphasize that the choice of the local construction defined in Definition~\ref{df: spline space extraordinary node} is not unique and other choices can be considered as well, e.g., the cubic unstructured T-spline constructions presented in~\cite{SSELBHS:2013,TosSH17,CWTLHKZ:2020,WLQHZC:2021}. However, the construction needs to be compatible with the refinement routine presented in Section~\ref{sec: Refinement} in the sense that spline spaces may be defined on any of the meshes generated with the refinement algorithm without losing linear independence. For the above construction, this is naturally fulfilled due to the fact that both the spline construction and the refinement routine are motivated by a higher-dimensional representation of the local mesh. 
\end{rem}

Due to the nature of the embedding defined in Definition~\ref{df: embedding}, the spline space $\mathcal{S}_u(\node)$ is exactly a space of spline functions supported in the $p$-disk of the extraordinary node $\node$ that 
\begin{itemize} 
	\item are $C^{p-1}$-continuous except for the prolongated edge-neighborhood $\mathrm{ep}^{p-1}(\edges(\node))$, where the functions are in general only $C^0$-continuous, and
	\item span those functions in $\splines^p(\bmesh,\kspecial)$ which have their support contained in the neighborhood $\domain{\dc_{p}(\node)}$ of the node $\node$. Due to the smoothness properties, the functions vanish and have vanishing derivatives up to order $p-1$ at the boundary of $\domain{\dc_{p}(\node)}$.
\end{itemize}
These properties are formalized in Proposition~\ref{prop: spline-space-extraordinary-node}. 
Note that for nodes in the unstructured two-dimensional mesh that are outside the $(p+1)/2$-disk of an extraordinary node, the corresponding spline basis function can be constructed as described in Section~\ref{subsec: high-dim struct T-splines} directly for $d=2$. For nodes inside the $(p+1)/2$-disk of an extraordinary node, we do not associate functions directly but rely on a basis construction derived from the embedding $\emb^k$. There exist several possibilities for the choice of corresponding basis functions that yield the space $\mathcal{S}_\mathrm{u}(\node)$ defined in~\eqref{eq:unstructuredSplineSpace}. It is easy to see that the functions $B_{\node'}^k\circ \emb^k$ for the entire index space $\node' \in ([-r,r]\cap\integer)^k$ are linearly dependent.
In the following, we present a choice of linearly independent basis functions of the space $\mathcal{S}_\mathrm{u}(\node)$. 

\begin{df}[basis functions near an extraordinary node]\label{df: basis functions}
	Let $r=(p-1)/2$. We consider the set of anchors
	\[
	\begin{array}{ll}
		\mathcal{I}^k_{r} =& \{ (i,j,-r,-r,\ldots,-r) : -r\leq i,j \leq r \} \\
		&\cup \{ (-r,i,j,-r,\ldots,-r) : -r\leq i,j \leq r \} \\
		&\cup \{ (-r,-r,i,j,\ldots,-r) : -r\leq i,j \leq r \} \\
		&\cup \dots \cup \{ (j,-r,\ldots,-r,i) : -r\leq i,j \leq r \} \subset ([-r,r]\cap\integer)^k.
	\end{array}
	\]
	Note that the cardinality of the set $\mathcal{I}^k_{r}$ is $k(4r^2+2r)+1=kp(p-1)+1$.
	Every node $\node$ in the set $\mathcal{I}^k_{r}$ is an anchor for a standard, $k$-variate basis function $B_{\node}^k$. 
	The set of corresponding basis functions on the two-dimensional unstructured mesh is then given by
	\begin{equation*}
		\cB_\node \sei \big\{ B^k_{\node}\circ \emb^k\;:\; \node \in \mathcal{I}^k_{r} \big\}
	\end{equation*}
	and the space of unstructured spline functions is defined by
	\begin{equation*}
		\mathcal{S}^*_\mathrm{u}(\node)\sei \mathrm{span}\{ b\;:\;b \in \cB_\node\}.
	\end{equation*}
\end{df}
We have that $\mathcal{S}^*_\mathrm{u}(\node)$ reproduces the classical $C^0$-continuous T-spline space based on knot and edge multiplicities of valence $p$ in the prolongated edge-neighborhood $\mathrm{ep}^{(p-1)/2}(\edges(\node))$ of the extraordinary node $\node$. Therefore, the set of basis functions as described in Definition~\ref{df: basis functions} is a natural choice for the spline space near the extraordinary node. 
\begin{prop}\label{prop: spline-space-extraordinary-node}
	Let $\node$ be an extraordinary node. Then, the set of functions as given in Definition~\ref{df: basis functions} forms a basis of the space $\mathcal{S}_\mathrm{u}(\node)$ as in Definition~\ref{df: spline space extraordinary node}. We moreover have
	\[
	\mathcal{S}^*_\mathrm{u}(\node) = \mathcal{S}_\mathrm{u}(\node) = \{ f \in \splines^p(\bmesh,\kspecial) : \mathrm{supp}(f) \subseteq \domain{\dc_{p}(\node)} \}.
	\]
\end{prop}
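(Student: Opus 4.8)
The plan is to sandwich the three spaces between the common dimension $kp(p-1)+1=|\mathcal{I}^k_r|$, after reducing everything to one-dimensional B-spline facts on each patch. Write $r=(p-1)/2$, so $p=2r+1$, and recall from $\emb^k$ that $\domain{\dc_{p}(\node)}$ is the union of the $k$ patches $\mesh_1,\dots,\mesh_k$, where $\emb^k(\mesh_j)$ lies in the coordinate plane spanned by $e_j,e_{j+1}$ (indices cyclic) and consecutive patches meet along the image of $\edge_{j+1}\in\edges(\node)$, the segment $\{ae_{j+1}\mid 0<a<p\}$. The first, elementary, step is the observation that at a point of $\mesh_j$ all coordinates other than the $j$-th and $(j{+}1)$-th vanish, so the tensor structure of the higher-dimensional T-spline gives
\[
(B^k_{\node'}\circ\emb^k)\big|_{\mesh_j} = c_j(\node')\,B_{\node'_j}(x_j)\,B_{\node'_{j+1}}(x_{j+1}),\qquad c_j(\node')=\prod_{i\neq j,j+1}B_{\node'_i}(0).
\]
Since a uniform degree-$p$ B-spline centred at an integer $z$ is strictly positive at $0$ exactly for $z\in[-r,r]$, every factor in $c_j(\node')$ with $\node'\in([-r,r]\cap\integer)^k$ is positive, hence $c_j(\node')>0$. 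Thus each patch restriction is a positive multiple of a bivariate B-spline with indices in $[-r,r]^2$, and these span a space $W_j$ which, by linear independence of the restrictions $B_a|_{(0,p)}$, has dimension $p^2$.

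Next I would establish the linear independence of $\cB_\node$ by a peeling argument. Assuming a vanishing combination $\sum_{\node'\in\mathcal{I}^k_r}\gamma_{\node'}\,B^k_{\node'}\circ\emb^k=0$ and restricting to $\mesh_j$, independence of $\{B_aB_b\}$ together with $c_j>0$ forces $\sum_{\node':(\node'_j,\node'_{j+1})=(a,b)}\gamma_{\node'}c_j(\node')=0$ for all $(a,b)$. For $a,b\neq -r$ only the face-$j$ anchor of $\mathcal{I}^k_r$ contributes, so its coefficient vanishes; this eliminates every anchor having two free coordinates. Repeating with exactly one of $a,b$ equal to $-r$ eliminates the anchors lying on the shared edges between consecutive faces, and a final application kills the central anchor $(-r,\dots,-r)$. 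Hence $\cB_\node$ is independent and $\dim\mathcal{S}^*_\mathrm{u}(\node)=kp(p-1)+1$.

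The third step is containment together with the dimension of the target space $\mathcal{F}:=\{f\in\splines^p(\bmesh,\kspecial)\mid\mathrm{supp}(f)\subseteq\domain{\dc_{p}(\node)}\}$. Each trace is a degree-$p$ tensor B-spline on every patch, $C^{p-1}$ in patch interiors, continuous across the node-axes (the images of $\ep^{p-1}(\edges(\node))$) by continuity of $\emb^k$, and vanishing to order $p-1$ on the outer boundary (because $B_a$ with $a\le r$ vanishes to that order at $p$); therefore $\mathcal{S}^*_\mathrm{u}(\node)\subseteq\mathcal{S}_\mathrm{u}(\node)\subseteq\mathcal{F}$. To compute $\dim\mathcal{F}$ I restrict $f\in\mathcal{F}$ to the patches: the outer vanishing forces $f|_{\mesh_j}\in W_j$, and $f$ is recovered from the matching tuple $(f|_{\mesh_j})_j\in\bigoplus_jW_j$ subject to $C^0$-continuity across each shared edge. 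The boundary-trace maps $W_j\to\mathcal{T}:=\mathrm{span}\{B_b|_{(0,p)}\}$ (of dimension $p$) are surjective with a single scalar relation, namely that both traces share the nodal value $\sum_b\lambda_bB_b(0)=\sum_a\mu_aB_a(0)$; hence the pair of boundary traces of $f|_{\mesh_j}$ ranges over a $(2p-1)$-dimensional subspace of $\mathcal{T}\oplus\mathcal{T}$ with a $(p-1)^2$-dimensional fibre. Parametrising the glued edge traces by $(g_1,\dots,g_k)\in\mathcal{T}^k$ and imposing that the nodal value be independent of $j$ gives exactly $k-1$ constraints, so
\[
\dim\mathcal{F}=k(p-1)^2+\bigl(kp-(k-1)\bigr)=kp(p-1)+1.
\]

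Combining the three steps yields $kp(p-1)+1=\dim\mathcal{S}^*_\mathrm{u}(\node)\le\dim\mathcal{S}_\mathrm{u}(\node)\le\dim\mathcal{F}=kp(p-1)+1$, so all inclusions are equalities and $\cB_\node$ is a basis of each space. I expect the main obstacle to be the dimension count of $\mathcal{F}$ in the last step: one must correctly isolate the single global relation (the common nodal value) that produces the summand $+1$ while accounting for the $k(p-1)^2$ interior ``bubble'' functions vanishing on both inner edges of a patch. Both features rely delicately on the strict positivity $B_a(0)>0$ for $|a|\le r$ and on the linear independence of the restricted B-splines $B_a|_{(0,p)}$, which I would record as preliminary lemmas.
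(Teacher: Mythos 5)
Your proof is correct, and it follows the same overall skeleton as the paper's proof --- the sandwich $\mathcal{S}^*_\mathrm{u}(\node)\subseteq\mathcal{S}_\mathrm{u}(\node)\subseteq\mathcal{F}\coloneqq\{f\in\splines^p(\bmesh,\kspecial):\mathrm{supp}(f)\subseteq\domain{\dc_{p}(\node)}\}$, combined with $\dim\mathcal{F}=kp(p-1)+1=\#\mathcal{I}^k_{r}$ and linear independence of $\cB_\node$ --- but you execute both nontrivial ingredients by genuinely different arguments. For independence, the paper performs a triangular change of basis: it subtracts suitable multiples of the central function and of the spoke-supported functions to produce a modified family $\tilde\cB_\node$ whose members live on single submeshes, and concludes by support disjointness (its Cases 1--3); you instead restrict a vanishing combination to each patch, exploit the tensor factorization $(B^k_{\node'}\circ\emb^k)|_{\mesh_j}=c_j(\node')\,B_{\node'_j}B_{\node'_{j+1}}$ with $c_j(\node')>0$, and peel coefficients via local linear independence of the restricted univariate B-splines; your three rounds (both free indices $\neq-r$, then exactly one, then the center) are precisely the dual of the paper's three cases, and the key observation that only the face-$j$ anchor can have two consecutive coordinates $\neq-r$ is valid because the free pair in $\mathcal{I}^k_{r}$ is always at consecutive (cyclic) positions. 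For the dimension of $\mathcal{F}$, the paper merely cites the $C^0$ multi-patch counting argument of~\cite[Section~3.3]{BS:2016}, whereas you prove the count: your trace/fibre computation is correct --- the kernel of the pair of spoke-trace maps on a patch amounts to matrices $\Lambda$ with $\Lambda w=0=\Lambda^{T}w$ for the strictly positive vector $w=(B_a(0))_{a}$, which after a change of basis kills one row and one column, giving fibre dimension $(p-1)^2$ and image exactly the $(2p-1)$-dimensional subspace $\{(u,v):u(0)=v(0)\}$, so that $k(p-1)^2+kp-(k-1)=kp(p-1)+1$. Your route buys self-containedness where the paper relies on a citation; the paper's route is shorter and makes the support structure of the modified basis geometrically visible. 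Two small points you should record explicitly: the step ``the outer vanishing forces $f|_{\mesh_j}\in W_j$'' needs the reverse inclusion, i.e., that $W_j$ equals the full boundary-constrained tensor spline space on the patch, which follows from your dimension count ($p^2$ on both sides, with $B_r$ vanishing to order $p-1$ at the endpoint $p$ since its support is $[-1,p]$ with simple knots); and the $C^{p-1}$-smoothness of the traces inside each patch uses, as the paper notes, that $\emb^k$ is $C^\infty$ away from $\ep^{p-1}(\edges(\node))$ and only $C^0$ across it.
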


\begin{proof} 
	We trivially have
	\[
	\mathcal{S}^*_\mathrm{u}(\node) \subseteq \mathcal{S}_\mathrm{u}(\node) \subseteq \{ f \in \splines^p(\bmesh,\kspecial) : \mathrm{supp}(f) \subseteq \domain{\dc_{p}(\node)} \},
	\]
	since $\mathcal{I}^k_{r} \subset ([-r,r]\cap\integer)^k$ and all functions $B^k_{\node}$ are $C^{p-1}$ smooth piecewise polynomials and $\emb^k$ is $C^\infty$ except for $\mathrm{ep}^{p-1}(\edges(\node))$, where it is only $C^0$. Moreover, the condition $\mathrm{supp}(B^k_{\node} \circ \emb^k) \subseteq \domain{\dc_{p}(\node)}$ follows from $B^k_{\node} \subset [-p,p]^k$ for all $\node \in ([-r,r]\cap\integer)^k$ and from the definition of $\emb^k$.
	
	As described before in Definition~\ref{df: embedding}, let $\{\mesh_i\}_{i=1}^k$ be non-overlapping structured submeshes around the extraordinary node $\node$. Each of these submeshes also defines a subdomain $\domain{\mesh_i}\subset\Omega$. As before, $k=v_\node$ is the valence of the extraordinary node. 
	
	The dimension of the space 
	\begin{equation}\label{eq: splines with support in DDpN}
		\{ f \in \splines^p(\bmesh,\kspecial) : \mathrm{supp}(f) \subseteq \domain{\dc_{p}(\node)} \}
	\end{equation}
	equals $kp(p-1)+1$, which can be checked using a simple counting argument following the definition of standard $C^0$ multi-patch B-splines as, e.g., in~\cite[Section~3.3]{BS:2016}.
	
	The last step consists in showing that the functions in $\cB_\node$ are linearly independent. Due to $|\cB_\node| = kp(p-1)+1$, these functions then form a basis of the space \eqref{eq: splines with support in DDpN} above.
	
	To prove this, let $\cB_\node = \{b_\ell\}_{\ell = 1}^{kp(p-1) + 1}$ be the set of functions that are constructed as described in Definition~\ref{df: basis functions}. Each of these functions has support
	\begin{itemize}
		\item on $i \times j$ elements ($1 \leq i,\,j \leq p$) on one of the submeshes and whose boundary includes $\node$ and
		\item on $i \times 1$ or $1 \times j$ or $1 \times 1$ elements on all the remaining submeshes.
	\end{itemize}
	Based on the functions $\cB_\node$, we construct a set of functions $\tilde\cB_\node = \{\tilde b_\ell\}_{\ell = 1}^{kp(p-1) + 1}$ with the same cardinality, for which the linear independence is more apparent due to support arguments. To this end, we sketch the construction considering three different cases below.
	
	\paragraph{Case 1: function centered at $\node$.}
	The function $\tilde b_1 = b_1$, which has support on exactly one element on each of the submeshes and corresponds to the index $(-r,-r,\ldots,-r) \in \mathcal{I}^k_{r}$, is the only function in the modified set of functions that is supported on more than two submeshes and therefore linearly independent to the functions of Case 2 and 3.
	
	\paragraph{Case 2: functions associated to nodes on $\mathrm{ep}^{r}(\edges(\node))$.}
	As a second case, we consider the functions $\{b_\ell\}_{\ell = 2}^{k(p-1)+1}$ that have support on exactly one element on all but two submeshes and have support on $i \times 1$ and $1 \times i$ elements ($2 \leq i \leq p$) on the remaining two submeshes, respectively. In the index notation, these functions are characterized by the indices $(i,-r,-r,\ldots,-r)$, $(-r,i,-r,\ldots,-r),\,\ldots,\,(-r,-r,-r,\ldots,i)\in \mathcal{I}^k_{r}$ for $-r<i\leq r$.
	
	Due to the definition of these functions and with the function from Case~1, we can subtract an appropriate multiple of $b_1$ to obtain functions that are only supported on the two submeshes with support on $i \times 1$ and $1 \times i$ elements, respectively. We denote them with $\{\tilde{b}_\ell\}_{\ell = 2}^{k(p-1)+1}$. These functions are linearly independent to each other since they all have different supports that cannot be obtained by the combination of the supports of other functions. Further, the functions from Case~2 are all linearly independent to the functions from Case 3 since all functions from Case 3 are only supported on one submesh.
	
	\paragraph{Case 3: remaining functions.}
	The remaining functions $\{b_\ell\}_{\ell = k(p-1)+2}^{kp(p-1)+1}$ have support on $i \times j$ elements ($2 \leq i,\,j \leq p$) on one (main) submesh, on $i \times 1$ and $1 \times j$ elements on the two edge-neighboring submeshes, respectively, and on one element on the remaining submeshes. These functions correspond to the remaining indices $(i,j,-r,\ldots,-r)$, $(-r,i,j,\ldots,-r),\,\ldots \in\mathcal{I}^k_{r}$ with $-r<i,j\leq r$ and can be modified to functions $\{\tilde{b}_\ell\}_{\ell = k(p-1)+2}^{kp(p-1)+1}$ whose supports lie within the $i \times j$ elements of only one submesh. 
	
	For a particular functions $b \in \{{b}_\ell\}_{\ell = k(p-1)+2}^{kp(p-1)+1}$, we achieve this by subtracting appropriate multiples of two functions in $\{\tilde b_\ell\}_{\ell = 2}^{k(p-1)+1}$, namely those with overlap of $1 \times i$ and $j \times 1$ elements with the main submesh, respectively. To compensate for the overlap of these two functions, a multiple of $b_1$ has to be added to obtain functions that are only supported within one submesh. As above, the resulting functions all have different supports that cannot be obtained by an appropriate combination of the supports of the remaining functions. 
	
	\bigskip\noindent
	In summary, the functions in $\tilde\cB_\node$ are linearly independent. Since they are constructed from the set $\cB_\node$ with equal size, also the functions in $\cB_\node$ are linearly independent, which completes the proof.
\end{proof}

\section{Refinement}
\label{sec: Refinement}
The refinement algorithm makes use of the following key features:
\begin{enumerate}
	\item an edge subdivision routine that allows for I-nodes, see Algorithm~\ref{alg: subdiv},
	\item refinement levels of edges,
	\item an edge neighborhood based on the mesh metric from Definition~\ref{df: mesh metric}, 
	\item direction indices, i.e., integers associated to each edge as explained in Definition~\ref{df: direction index} below.
\end{enumerate}

\begin{algorithm}
	\caption{Subdivision of an edge}\label{alg: subdiv}
	\begin{algorithmic}[1] 
		\Procedure{subdiv}{$\mesh,\edge=[a,b]$} \Comment{the mesh and the edge to be subdivided}
		\State $\midp(\edge)\gets \frac{a+b}2$;\enspace $\edge_1\gets[a,\midp(\edge)]$;\enspace $\edge_2\gets[\midp(\edge),b]$\label{alg line: edge children}
		\State $\nodes\gets \nodes\cup\{\midp(\edge)\}$\Comment{update nodes}
		\State $\edges\gets \edges\cup\{\edge_1,\edge_2\}$\Comment{update edges}
		\State mark $\edge$ as inactive/refined
		\ForAll {$\Q\in\elements$ neighboring $\edge$}\Comment{addresses at most two elements}
		\State $\edge'\gets$ opposite edge of $\edge$ in $\Q$
		\If {$\edge'$ has been subdivided} 
		\State $\tilde \edge\gets$ new edge connecting the midpoints of $\edge$ and $\edge'$\label{alg line: other new edge}
		\State $\{\Q_1,\Q_2\}\gets$ children of $\Q$ being subdivided by $\tilde \edge$
		\State $\edges\gets\edges\cup\{\tilde \edge\}$
		\State $\elements\gets\elements\cup\{\Q_1,\Q_2\}$
		\State mark $\Q$ as inactive/refined
		\EndIf
		\EndFor
		\State \textbf{return} $\mesh$
		\EndProcedure
	\end{algorithmic}
\end{algorithm}

\begin{df}[refinement level]\label{df: refinement-level}
	All edges of the initial mesh have the refinement level zero, written $\ell(\edge)=0$ for all $\edge\in\edges$.
	The levels of new edges created during refinement obey the following rules.
	\begin{enumerate}
		\item The children $\edge_1,\edge_2$ of an edge $\edge$ as in Algorithm~\ref{alg: subdiv}, line~\ref{alg line: edge children}, have the level of their parent, increased by 1, i.e., $\ell(\edge_1)=\ell(\edge_2)=\ell(\edge)+1$~.
		\item Other new edges such as $\tilde \edge$ in Algorithm~\ref{alg: subdiv}, line~\ref{alg line: other new edge}, have the same level as their opposite edges.
	\end{enumerate}
\end{df}

\begin{df}[edge neighborhood]\label{df: neighb}
	Given a mesh $\mesh$, we define for each edge $\edge\in\edges$ the neighborhood
	\[\neighb(\mesh,\edge) = \bigl\{\edge'\in\edges\mid\dist(\edge,\edge')\le \tfrac{p+1}2 \cdot 2^{-\ell(\edge)}\bigr\},\]
	where $\dist(\edge,\edge')$ is the abbreviation for $\dist(\midp(\edge),\midp(\edge'))$.
\end{df}

\begin{figure}[t]
	\centering
	\includegraphics{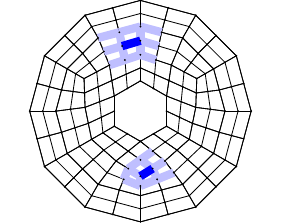}\quad
	\includegraphics{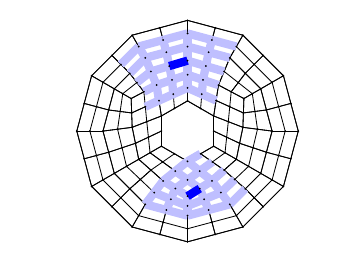}\quad
	\includegraphics{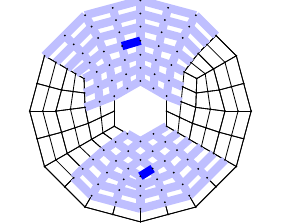}
	\caption{Examples for edge neighborhoods for $p=1$ (left) and $p=3$ (center) and $p=5$ (right). }
\end{figure}

The main difficulty for T-splines on unstructured meshes is that there is no global notion of directions in the sense of horizontal and vertical edges.
However, we can assign edge directions on each element and require  consistency of these directions across the mesh. This is realized by integer-valued direction indices, which are defined below.

\begin{df}[direction index]\label{df: direction index}
	Given a T-mesh $\mesh$, the \emph{direction index} $\di(\edge)$ of any edge $\edge\in\edges$ is defined by a mapping $\di:\edges\to\nat$ such that
	\begin{equation}
		\label{eq: di criterion}
		\forall\ \Q\in\elements,\ \edge_1,\edge_2\in\edges(\Q):\quad \di(\edge_1)=\di(\edge_2)\Leftrightarrow \overline{\edge_1}\cap \overline{\edge_2}=\emptyset .
	\end{equation}
	This is, edges opposite to each other have identical direction indices, whereas neighboring edges in any element have distinct direction indices. Note, however, that $\di(\edge_1)=\di(\edge_2)$ \emph{and} $\overline{\edge_1}\cap \overline{\edge_2}\neq\emptyset$ is allowed if there is no common element $\Q\in\elements$ such that $\edges(\Q)$ contains both $\edge_1$ and $\edge_2$, see Figure~\ref{f:directionInd} for an illustration. {We emphasize that the concept of direction indices is motivated by the different coordinate directions in higher dimensions. That is, if the unstructured mesh is obtained from traces of a higher-dimensional mesh, a direction index of an edge (locally) corresponds to the coordinate direction to which the corresponding edge in the higher-dimensional mesh is parallel.}
	
	Any map $\di\colon\edges\to\nat$ satisfying \eqref{eq: di criterion} is called \emph{direction labeling}, and its value $\di(\edge)$ at $\edge\in\edges$ is called  direction index of $\edge$.
	
	The direction indices of new edges created during refinement are defined through the rules
	\begin{enumerate}
		\item The children $\edge_1,\edge_2$ of an edge $\edge$ as in Algorithm~\ref{alg: subdiv}, line~\ref{alg line: edge children}, have the same direction index as their parent.
		\item Other new edges such as $\tilde \edge$ in Algorithm~\ref{alg: subdiv}, line~\ref{alg line: other new edge}, 
		have the same direction index as their opposite edges.
	\end{enumerate}
	
\end{df}

\begin{figure}[t]
	\centering
	\begin{minipage}[b]{.45\textwidth}
		\centering
		\includegraphics[width=.9\textwidth]{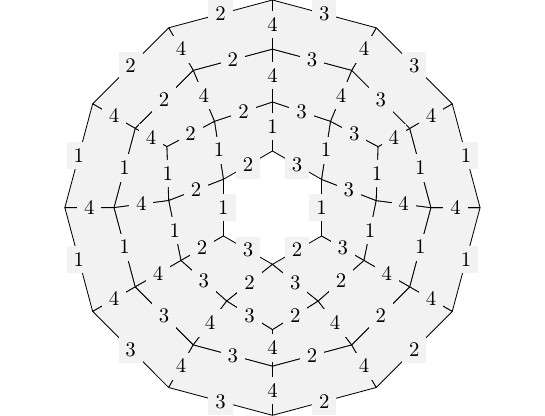}
		\caption{A direction labeling for the mesh from Figure~\ref{fig: ring mesh}.}\label{f:directionInd}
	\end{minipage}\hspace{.05\textwidth}%
	\begin{minipage}[b]{.45\textwidth}
		\centering
		\includegraphics[width=0.5\textwidth]{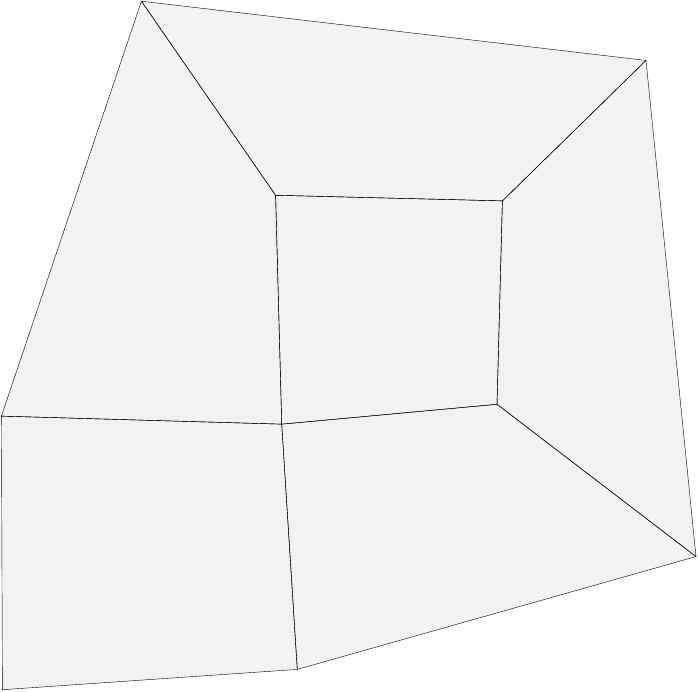}
		\caption{Example for a totally unstructured mesh.}\label{fig: tu mesh}
	\end{minipage}
\end{figure}

For practical purposes, it is important to find a configuration of direction indices with a small number of distinct indices for a given unstructured mesh. We suggest the following approach: for any two edges $\edge$ and $\edge'$, we write $\edge\sim\edge'$ if $\edge$ and $\edge'$ are opposite edges of a common neighboring cell. 
In addition, we require that the relation $\sim$ is reflexive and transitive, such that $\edge\sim\edge'$ is equivalent to the fact that the rule \eqref{eq: di criterion} enforces $\di(\edge)=\di(\edge')$. 
We draw an undirected graph, where each node represents an equivalence class $[\edge]$ (with $[\edge]=[\edge'] \Leftrightarrow \edge\sim\edge'$), 
and two nodes $[\edge]$, $[\edge']$ are connected if $\edge$ and $\edge'$ are adjacent edges of a common neighboring cell.
This construction reduces our problem to a standard graph coloring problem, for which we can apply existing approaches, see, e.g., \cite{Farzaneh:2022}. In the context of totally unstructured meshes, this approach is presented in more detail in \ref{sec: other meshes}.

In the following, we assume that there exists a direction labeling. This is an important assumption, 
as there exist meshes for which no direction labeling is possible, since the graph constructed above may have self-connected nodes. 
According to \cite[Definition~7.2.2]{Morgenstern:2017}, these meshes are called \emph{totally unstructured}, see Figure~\ref{fig: tu mesh} for an illustration and~\ref{sec: other meshes} for generalized direction indices on these meshes.

Our refinement algorithm is described in Algorithm~\ref{alg: ref} and example outputs are shown in Figure~\ref{fig: refinement example}.
\begin{algorithm}
	\caption{Refinement algorithm}\label{alg: ref}
	\begin{algorithmic}[1] 
		\Procedure{refine}{$\mesh,\edge$} 
		\While {$\Exists \edge'\in\neighb(\mesh,\edge):\enspace\ell(\edge')<\ell(\edge)$ 
			\\or $\Exists \edge'\in\neighb(\mesh,\edge):\enspace\ell(\edge')=\ell(\edge) \wedge \di(\edge')<\di(\edge)$}
		\ForAll {$\edge'\in\neighb(\mesh,\edge)$}
		\If {$\ell(\edge')<\ell(\edge)$ } 
		\State $\mesh\gets\refine(\mesh,\edge')$
		\ElsIf {$\ell(\edge')=\ell(\edge) \wedge \di(\edge')<\di(\edge)$}
		\State $\mesh\gets\refine(\mesh,\edge')$
		\EndIf
		\EndFor
		\EndWhile
		\State $\mesh\gets\textsc{subdiv}(\mesh,\edge)$
		\State \textbf{return} $\mesh$
		\EndProcedure
	\end{algorithmic}
\end{algorithm}

\begin{figure}[t]
	\centering
	\includegraphics[width=.45\textwidth]{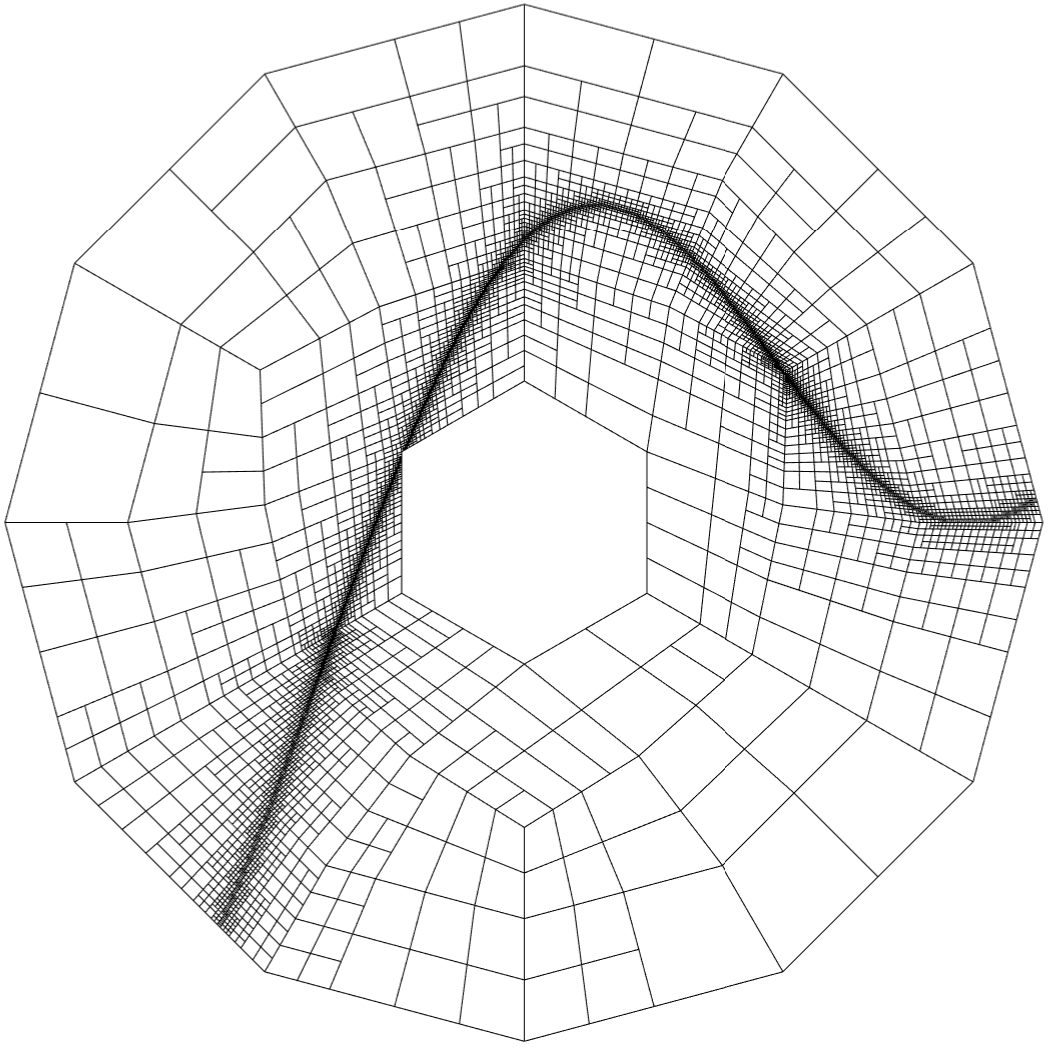}\hspace{.05\textwidth}%
	\includegraphics[width=.45\textwidth]{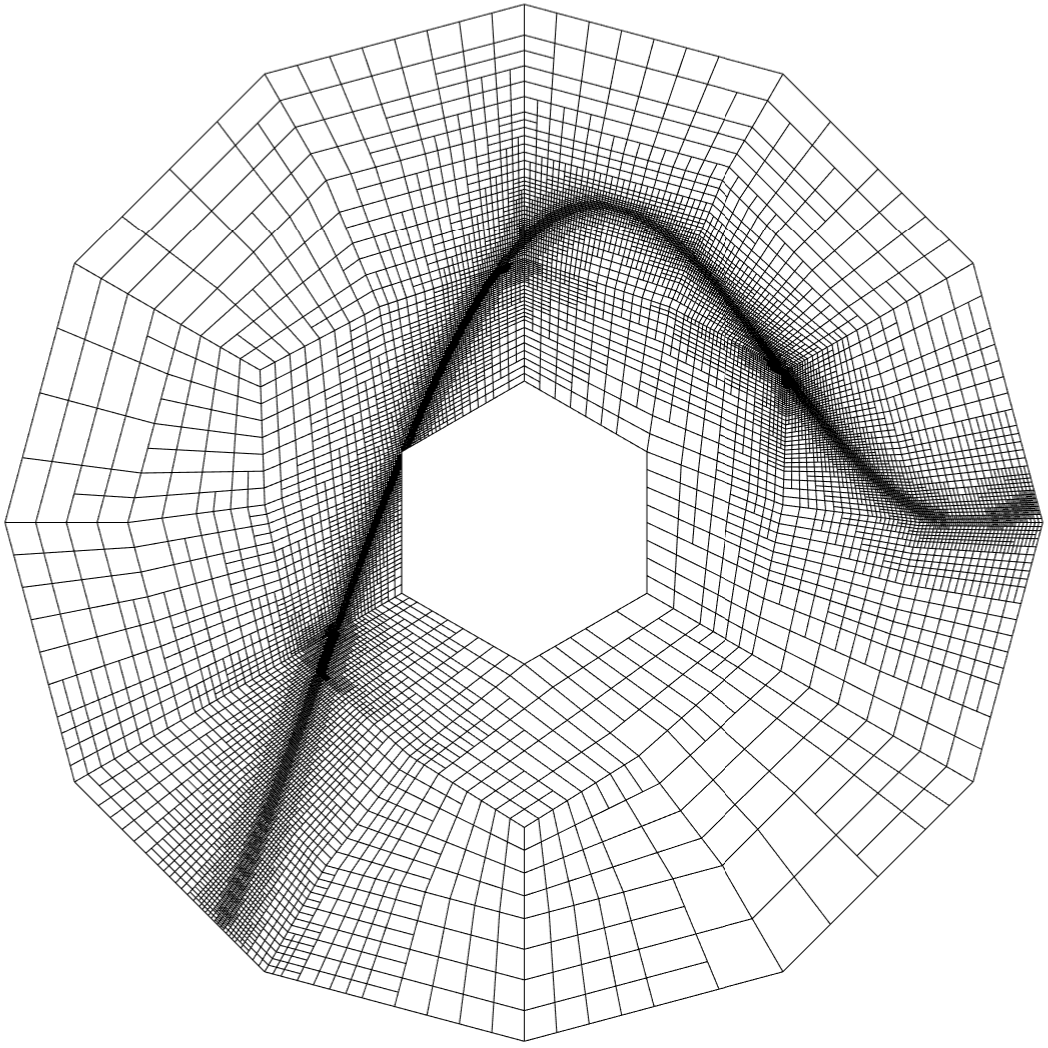}
	\caption{Refinement examples for $p=1$ (left) and $p=3$ (right), where the mesh is refined towards a given curve.}
	\label{fig: refinement example}
\end{figure}

\section{Properties of T-meshes and T-splines}
\label{sec: mesh properties}

By construction, the basis functions in~$\cB$, as given in Definition~\ref{df: T-splines over an unstructured T-mesh}, have local support and satisfy~\eqref{eq:subspace}, i.e.,
\begin{equation*}
	\mathrm{span} (\cB) \subseteq \splines^p(\bmesh,\kspecial).
\end{equation*}
Moreover, we have the following.
\begin{theorem}\label{thm:polynomial-reproduction}
	For all elements $\element\in\elements$ that are sufficiently far away from the boundary of the mesh, we have that 
	\begin{equation*}
		\mathrm{span} (\cB) |_{\element} = \mathbb{P}^{(p,p)}(\element).
	\end{equation*}
\end{theorem}
\begin{proof}
	Recall the definition of $\cB$ as
	\[
	\cB = \Big(\bigcup_{\node \in\enodes} \cB_\node\Big) \cup \{ B_\node : \node\in\anchors \}.
	\]
	For each extraordinary node $\node\in \enodes$, the support of $\cB_\node$ is given by $\domain{\dc_{p}(\node)}$. Due to Assumption~\ref{assu:extraordinarynodes}, the $(3p-1)/2$-disk $\dc_{(3p-1)/2}(\node)$ around an extraordinary node $\node$ is composed of $k$ non-overlapping structured submeshes $\{\mesh_i^\ast\}_{i=1}^k$. Note that the meshes $\mesh_i^\ast$ contain the submeshes $\mesh_i$ in~\eqref{eq:structured-submeshes}.
	
	We assume that $\element\in\elements$ is sufficiently far away from the boundary $\partial\Omega$, i.e., $\element\notin \dc_{p}(\node)$ for all boundary nodes $\node\in\boundarynodes$. 
	We distinguish two different cases. We either have
	\begin{enumerate}
		\item $\element \in \dc_{p}(\node)$ for exactly one extraordinary node $\node\in \enodes$, or
		\item $\element \notin \dc_{p}(\node)$ for all extraordinary nodes $\node\in \enodes$.
	\end{enumerate}
	Since the $p$-disks $\dc_{p}(\node)$ of all extraordinary nodes $\node\in \enodes$ are disjoint, these are the only possible cases.
	
	\textit{Case 1:} Let $\element \in \dc_{p}(\node)$ for an extraordinary node $\node$. Let $\mesh_j^\ast$ be the submesh containing $\element$. On $\mesh_j^\ast$ the functions in 
	\[
	\cB_\node \cup \{ B_{\node'} : \node' \in \nodes(\dc_{(3p-1)/2}(\node)) \}
	\]
	form a standard tensor-product basis, since all nodes in $\nodes(\dc_{(3p-1)/2}(\node))$ are regular. Thus, they span all tensor-product polynomials of degree $p$ on $\element$.
	
	\textit{Case 2:} The polynomial reproduction on $\element$ follows from the standard T-spline construction, since all functions which have support on $\element$ are standard T-splines from $\{ B_\node : \node\in\anchors \}$.
	
	This concludes the proof.
\end{proof}
In Figure~\ref{fig:extr-node_neighborhood}, we visualize the construction of the spaces around an extraordinary node. In this example, we have $p=3$. In the left part, we show an example of a mesh around an extraordinary node $\node$ (marked with a black star). We assume that the depicted part of a mesh is sufficiently far away from the boundary. The green region marks the $p$-disk around the node, which corresponds to the union of supports of functions in $\cB_\node$. Similarly, the region marked in red represents the support of $\cB_{\node'}$ for another extraordinary node $\node'$ (marked with a gray star). The two extraordinary nodes satisfy that their corresponding $p$-disks (marked in green and red, respectively) do not overlap. Moreover, all nodes inside the $(3p+1)/2$-disk are regular. Thus, Assumption~\ref{assu:extraordinarynodes} is satisfied. However, nodes on the boundary of the $(3p+1)/2$-disk may be T-nodes, as indicated in the figure. The black and gray nodes represent the anchors of classical tensor-product T-spline basis functions which can be defined on a regular two-dimensional submesh. The construction of functions in $\cB_\node$ that have support completely contained in the green region is visualized in the right part of Figure~\ref{fig:extr-node_neighborhood}. More precisely, each black node represents an anchor from $\mathcal{I}_r^k$ as in Definition~\ref{df: basis functions}, where $r=(p-1)/2$ and $k=3$. The supports of the corresponding $k$-dimensional basis functions are all contained in the green region, when intersected with the coordinate planes.
\begin{figure}[t]
	\centering
	\includegraphics[width=.4\textwidth]{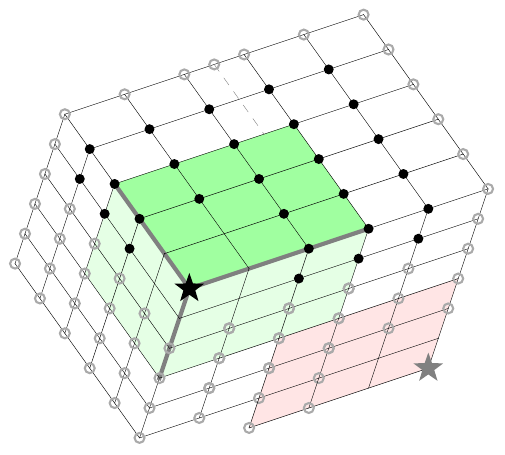}\qquad
	\includegraphics[width=.4\textwidth]{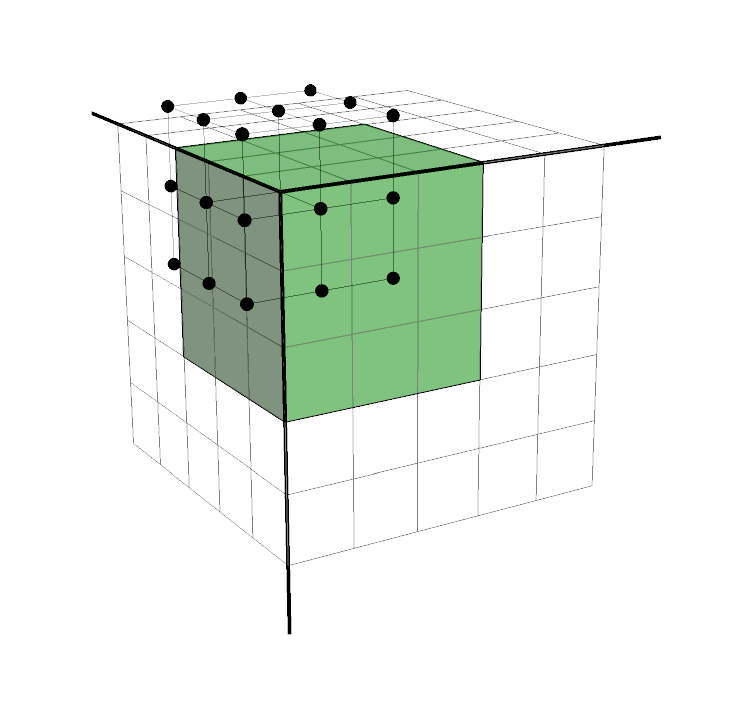}
	\caption{{Left: An example of a neighborhood of an extraordinary node $\node$ for degree $p=3$. The green region marks the support of $\cB_\node$, while the red region marks the support of $\cB_{\node'}$ for another extraordinary node $\node'$. The anchors are marked with circles, where the full, black circles are denoting those anchors that correspond to functions having support in the darker green region. The thick lines mark the edges of $C^0$-smoothness for functions in $\cB_\node$. Right: The higher-dimensional embedding and relevant basis functions, as in Definition~\ref{df: basis functions}. The anchors of basis functions are marked with black bullets. Their joint support, intersected with the coordinate planes, is marked in green.}}
	\label{fig:extr-node_neighborhood}
\end{figure}

{Note that polynomial reproduction is only guaranteed for elements that are sufficiently far away from the boundary. With minor modifications, generalizing B-splines on open knot vectors, the space can be adapted to obtain interpolatory B-splines at the boundary, which reproduce polynomials on all elements.} 

In the following subsections, we study properties of the proposed refinement algorithm. First, we show that the effect of the refinement algorithm is always local and of linear complexity and after that we show that the resulting functions remain linearly independent.

\subsection{Linear complexity}
In this subsection, we present two theorems that adress the complexity of the refinement procedure.
Theorem~\ref{thm: locality} states the existence of a uniform upper bound $C$ on the distance between an edge called for refinement and any edge that is produced by the recursive refinement procedure.
Theorem~\ref{thm: complexity} states that there is a uniform upper bound $C$ on the ratio between the accumulated numbers of produced and marked edges over the whole refinement history. 

In order to indicate that this is not always given, we present here an example of a refinement strategy that does not have linear complexity, a scenario which -- for computational reasons -- should be avoided.
Consider the following intuitive set of refinement rules for bilinear T-splines:
\newenvironment{inlinetikz}{\begin{tikzpicture}[scale=.35, baseline=2pt]}{\end{tikzpicture}}
\begin{enumerate}
	\item We use edge bisections as in Algorithm~\ref{alg: subdiv}. 
	\item Any cell may contain at most one T-junction (%
	\begin{inlinetikz} \draw (0,0) grid (2,1) (0,.5)--(1,.5) (.5,0)--(.5,.5);  \end{inlinetikz} is ok, 
	\begin{inlinetikz} \draw (0,0) grid (2,1) (0,.5)--(1,.5) (.5,0)--(.5,.5) (.75,0)--(.75,.5);  \end{inlinetikz} is not ok).
	\item If a refinement of an edge causes more than one T-junction on a neighbor element $\element$, then $\element$ is refined in direction of the previously existing T-junction (this is, the opposite edge of the old T-junction is bisected such that 
	\begin{inlinetikz} \draw (0,0) grid (2,1) (0,.5)--(1,.5) (.5,0)--(.5,.5) (.75,0)--(.75,.5);  \end{inlinetikz} becomes 
	\begin{inlinetikz} \draw (0,0) grid (2,1) (0,.5)--(1,.5) (.5,0)--(.5,1) (.75,0)--(.75,.5);  \end{inlinetikz}).
\end{enumerate}

We start with an initial mesh that is structured and contains a single cell, and
we apply $2n\in\mathbb N$ bisections on horizontal edges, obtaining $n+1$ cells and $3n+4$ edges.
\newcommand{\examplemesh}[1]{%
	\begin{tikzpicture}[scale=2, baseline=2.5em]
		\pgfmathsetmacro{\n}{#1}
		\draw (0,0) grid[xstep=.2] +(1.6,1);
		\foreach \a in {1,...,\n} \draw (0,1-.5^\a)--(.2*\n-.2*\a,1-.5^\a);
		\foreach \a in {0,...,8} \foreach \b in {0,1} \fill (.2*\a,\b) circle (.5pt);
		\foreach \a in {1,...,\n} \foreach \b in {\a,...,\n} \fill (.2*\a-.2,1-.5*2^\a*.5^\b) circle (.5pt);
	\end{tikzpicture}%
}
\[ \begin{tikzpicture}[scale=2, baseline=2.5em]  \draw (0,0) rectangle (1.6,1);
	\foreach \a in {0,1.6} \foreach \b in {0,1} \fill (\a,\b) circle (.5pt); \end{tikzpicture}%
\quad\to\quad 
\begin{tikzpicture}[scale=2, baseline=2.5em]  \draw (0,0) grid[xstep=.2] (1.6,1);
	\foreach \a in {0,...,8} \foreach \b in {0,1} \fill (.2*\a,\b) circle (.5pt); \end{tikzpicture}%
\]
For $k=1,\dots,n+2$, we mark the vertical edge in the upper left corner for bisection, increasing the number of edges by $2k-1$ in each step. 
\begin{gather*}	 
	\examplemesh1 \quad\to\quad \examplemesh2\quad\to\quad \examplemesh3
	\\[1em]
	\quad\to\quad \examplemesh4
	\to\dots\to \examplemesh9 \quad
\end{gather*}
In total we have marked $\#\edges_\mathrm M = 3n+2$ edges for refinement, increasing the number of edges by 
\begin{equation}
	\#\edges_{2n+1}-\#\edges_0 = 3n+\sum_{k=1}^{n+2}(2k-1)=3n+(n+2)^2 = n^2+7n+4.
\end{equation}
For any given constant $C>0$, we can choose $n=\lceil 3C\rceil$ in order to generate a sequence of refinements such that the ratio of generated and marked edges is 
\begin{equation}
	\frac{\#\edges_{2n+1}-\#\edges_0}{\#\edges_\mathrm M} = \frac{n^2+7n+4}{3n+2} 
	>\frac n3\ge C.
\end{equation}
This refinement strategy thus does not have linear complexity in the sense of Theorem~\ref{thm: complexity}.

\begin{theorem}[locality of the refinement]\label{thm: locality}%
	Let $\edge$ be an edge in the mesh $\mesh$, then any new edge $\edge'$ in $\refine(\mesh,\edge)\setminus\mesh$ satisfies
	\begin{equation*}
		\dist(\edge,\edge') < \Cref{thm: locality}\,2^{-\ell(\edge')}
		\quad\text{and}\quad \ell(\edge')\le\ell(\edge)+1,
	\end{equation*}
	where $\dist(\edge,\edge')$ is the abbreviation for $\dist(\midp(\edge),\midp(\edge'))$,
	$K=\#\di(\{\edge_0,\dots,\edge_J\})$ is the number of distinct direction indices occuring in the neighborhoods between $\edge$ and $\edge'$,
	and
	\[\Cref{thm: locality} = \tfrac12 + \tfrac{2}{2^{1/K}(2^{1/K}-1)}\cdot (p+1) < \cref{thm: locality} K(p+1),\]
	where $\cref{thm: locality}$ does not depend on the mesh $\mesh$, $K$, or $p$.
\end{theorem}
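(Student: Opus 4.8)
The plan is to analyze the recursive structure of \textsc{refine} directly. When $\refine(\mesh,\edge)$ is called, the \textbf{while}-loop triggers recursive calls $\refine(\mesh,\edge')$ only for edges $\edge'\in\neighb(\mesh,\edge)$ satisfying either $\level{\edge'}<\level{\edge}$, or $\level{\edge'}=\level{\edge}$ with $\di(\edge')<\di(\edge)$. I would track a chain of edges $\edge=\edge_0,\edge_1,\dots,\edge_J=\edge'$ where each $\edge_{i+1}$ lies in $\neighb(\mesh,\edge_i)$ and was the argument of a recursive call issued while processing $\edge_i$. By the loop guard, along such a chain the pair $(\level{\edge_i},\di(\edge_i))$ is strictly decreasing in lexicographic order (level first, then direction index), which immediately gives $\level{\edge'}\le\level{\edge_0}=\level{\edge}$; the extra $+1$ in the level bound comes from the final \textsc{subdiv} step on the last edge, whose children have level one higher than their parent. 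This monotonicity also bounds the \emph{length} of the chain: the level can decrease at most finitely often, and between level-decreases the direction index strictly decreases, so the chain visits at most $K$ distinct direction indices per level.

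Next I would bound the geometric displacement. For each step, membership $\edge_{i+1}\in\neighb(\mesh,\edge_i)$ gives by Definition~\ref{df: neighb}
\[
 \dist(\edge_i,\edge_{i+1})\le \tfrac{p+1}2\cdot 2^{-\level{\edge_i}}.
\]
Summing along the chain and using the triangle inequality for the metric $\dist$ yields
\[
 \dist(\edge,\edge')\le \sum_{i=0}^{J-1}\tfrac{p+1}2\cdot 2^{-\level{\edge_i}}.
\]
The key observation is that the levels $\level{\edge_i}$ are non-increasing along the chain (reading from $\edge'$ back to $\edge$ they are non-decreasing), and at each fixed level at most $K$ consecutive edges appear before the level must strictly increase toward $\edge'$. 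I would reorganize the sum by grouping the at-most-$K$ terms at each level together; the worst case is a geometric series in $2^{-1/K}$ after distributing the level drops as evenly as possible across the chain, which is exactly where the factor $2^{1/K}(2^{1/K}-1)^{-1}$ arises. Pulling out the factor $2^{-\level{\edge'}}$ and summing the geometric series produces the stated constant $\Cref{thm: locality}=\tfrac12+\tfrac{2}{2^{1/K}(2^{1/K}-1)}(p+1)$, with the leading $\tfrac12$ accounting for the half-length offset between an edge's midpoint and its refined children.

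The main obstacle I anticipate is making the grouping-and-summation rigorous: the chain is not literally a single monotone path but the frontier of a recursion tree, so I must argue carefully that \emph{any} newly produced edge $\edge'$ is connected to $\edge$ by some admissible chain whose $(\level{},\di)$-pairs are strictly lex-decreasing, and that the cumulative distance along the worst such chain is controlled by the geometric sum above. I would establish this by induction on the recursion depth, using the loop guard to certify admissibility of each step. A secondary technical point is the final inequality $\Cref{thm: locality}<\cref{thm: locality}K(p+1)$: this requires the elementary estimate $\tfrac{2}{2^{1/K}(2^{1/K}-1)}\le cK$ for a mesh-independent constant $c$, which follows from $2^{1/K}-1\ge (\ln 2)/K$ for $K\ge1$, so that the whole expression is $O(K)$ uniformly in $K$ and $p$; I would verify this bound explicitly to pin down $\cref{thm: locality}$.
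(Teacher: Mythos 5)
Your proposal is correct and follows essentially the same route as the paper's proof: the same chain of subdivided edges with lexicographically decreasing $(\ell,\di)$-pairs, the observation that at most $K$ consecutive edges can share a level before it must drop (the paper's inequality $\ell(\edge_{j-K})<\ell(\edge_j)$, which makes your ``even distribution of level drops'' rigorous pointwise), the per-step bound $\dist(\edge_{j-1},\edge_j)\le\tfrac{p+1}2\cdot 2^{-\ell(\edge_j)}$ from Definition~\ref{df: neighb}, the resulting geometric series yielding $\tfrac{2}{2^{1/K}(2^{1/K}-1)}$, and the leading $\tfrac12$ from the parent--child midpoint offset. Two of your variations are in fact slight improvements: deducing $\ell(\edge')\le\ell(\edge)+1$ directly from the monotonicity of levels along the chain is simpler than the paper's detour through $\ell(\edge')\le\ell(\edge)+2-\tfrac{J+1}K$, and your explicit estimate $2^{1/K}-1\ge(\ln 2)/K$ gives the uniform bound $\Cref{thm: locality}<\cref{thm: locality}\,K(p+1)$ for all $K\ge 1$, whereas the paper only verifies the limiting behavior as $K\to\infty$ via de L'H\^opital.
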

Theorem~\ref{thm: locality} is a very common result in the theory of mesh-adaptive Galerkin schemes, see, e.g., \cite[Lemmas 3.1.11 and 5.4.14]{Morgenstern:2017} for Hierarchical B-splines and T-splines on structured meshes, or \cite[Lemma 18]{Nochetto:Veeser:2012} for the Newest Vertex Bisection on triangular meshes.
\begin{proof}
	The existence of the edge $\edge'$ in $\refine(\mesh,\edge)\setminus\mesh$ means that Algorithm~\ref{alg: ref} has subdivided a sequence of edges $\edge=\edge_J,\edge_{J-1},\dots,\edge_0$ with $\edge'$ being a child of $\edge_0$, thus $\ell(\edge')=\ell(\edge_0)+1$, and with
	$\edge_{j-1}\in\neighb(\mesh,\edge_j)$ and either $\ell(\edge_{j-1})<\ell(\edge_j)$ or $\ell(\edge_{j-1})=\ell(\edge_j)\wedge\di(\edge_{j-1})<\di(\edge_j)$ for $j=1,\dots,J$.
	
	Let $K=\#\di(\{\edge_0,\dots,\edge_J\})$ be the number of distinct direction indices in this sequence. Then, we have $\ell(\edge_{j-K})<\ell(\edge_j)$, or equivalently,
	$\ell(\edge_{j-K})+1\le\ell(\edge_j)$, for any $j\in\{K,\dots,J\}$. 
	Repeated application yields for any $j\in\{{K},\dots,J\}$
	\begin{equation}\label{eq: thm locality: lEj ge lE0 + j/K}
		\ell(\edge_j)\ge \ell(\edge_{j-K})+1
		\ge \dots \ge \ell(\edge_{j-\lfloor j/K\rfloor\cdot K})+\lfloor \tfrac jK\rfloor
		\ge \ell(\edge_0)+\tfrac {j-K+1}K .
	\end{equation}
	Since $\dist$ is a metric, we have the triangle inequality
	\begin{equation}\label{eq: thm locality: dist = dist + dist}
		\dist(\edge',\edge) \le \dist(\edge',\edge_0) +\dist(\edge_0,\edge) 
	\end{equation}
	with
	\begin{equation}\label{eq: thm locality: dist details}
		\dist(\edge',\edge_0) = \tfrac12\diam(\edge') = 2^{-\ell(\edge')-1}, \qquad
		\dist(\edge_0,\edge) \le \sum_{j=1}^J\dist(\edge_{j-1},\edge_j) .
	\end{equation}
	Since $\edge_{j-1}\in\neighb(\mesh,\edge_j)$, we get by construction that 
	$\dist(\edge_{j-1},\edge_j)\le p\cdot 2^{-\ell(\edge_j)}$ and hence
	\begin{align*}
		\sum_{j=1}^J\dist(\edge_{j-1},\edge_j)
		&\le \sum_{j=1}^J\tfrac{p+1}2\cdot 2^{-\ell(\edge_j)}
		\stackrel{\eqref{eq: thm locality: lEj ge lE0 + j/K}}\le  \sum_{j=1}^J(p+1)\cdot 2^{-1-\ell(\edge_0)-(j-K+1)/K }
		\\&= (p+1)\cdot2^{-1-\ell(\edge_0)+(K-1)/K}\sum_{j=1}^J (2^{-1/K})^j
		< (p+1)\cdot2^{-(\ell(\edge_0)+1)+(K-1)/K}\frac{2^{-1/K}}{1-2^{-1/K}}
		\\&= (p+1)\cdot2^{-\ell(\edge')+(K-1)/K}\frac{2^{1/K}}{2^{2/K}-2^{1/K}}
		= (p+1)\cdot2^{-\ell(\edge')}\frac2{2^{1/K}(2^{1/K}-1)}
	\end{align*}
	The combination with \eqref{eq: thm locality: dist = dist + dist} and \eqref{eq: thm locality: dist details} concludes the distance estimate.
	The application of \eqref{eq: thm locality: lEj ge lE0 + j/K} to the case $j=J\ge0$, together with $\ell(\edge_0)=\ell(\edge')-1$ from above, yields
	\begin{equation*}
		\ell(\edge)\ge\ell(\edge_0) + \tfrac{J-K+1}K = \ell(\edge')-2 + \tfrac{J+1}K,
	\end{equation*}
	hence $\ell(\edge')\le \ell(\edge) +2 -  \tfrac{J+1}K$. Since $\tfrac{J+1}K>0$ and $\ell(\edge'), \ell(\edge)$ are integer numbers, we get \mbox{$\ell(\edge')\le \ell(\edge) +1$}. 
	{Thus, we obtain 
		\[
		\dist(\edge,\edge') < \Cref{thm: locality}\,2^{-\ell(\edge')}
		\]
		with 
		\[
		\Cref{thm: locality} = \tfrac12 + \tfrac{2}{2^{1/K}(2^{1/K}-1)}\cdot (p+1).
		\]
		To conclude the proof, we show that 
		\[
		\tfrac{\Cref{thm: locality}-1/2}{K(p+1)} = \tfrac{2}{K 2^{1/K}(2^{1/K}-1)} \rightarrow \tfrac{2}{\ln(2)}
		\]
		for $K\rightarrow \infty$. To do so, we replace $K$ by $x=1/K$ and obtain
		\[
		\lim_{x\rightarrow 0}\tfrac{2x}{2^{x}(2^{x}-1)} = \lim_{x\rightarrow 0}\tfrac{2}{2^{2 x} \ln(2)+ 2^x (2^x-1) \ln(2)} = \tfrac{2}{\ln(2)},
		\]
		using de L'H\^{o}pital's rule. Hence, $\Cref{thm: locality} \rightarrow_{K\rightarrow \infty} \tfrac12+ \tfrac{2}{\ln(2)}K(p+1)$ and the statement follows.}
\end{proof}

\begin{theorem}\label{thm: complexity}
	Let $\edges_\mathrm M= \{\edge_0,\dots,\edge_{J-1}\}$ be the set of marked edges used to generate the sequence of T-meshes  $\mesh_0,\mesh_1,\dots,\mesh_J$ with $\mesh_j = (\elements_j,\edges_j,\nodes_j)$ starting from $\mesh_0$, namely  
	\begin{equation*}\mesh_j=\refine(\mesh_{j-1},\edge_{j-1}),\quad \edge_{j-1}\in\edges_{j-1}\quad\text{for}\enspace j\in\{1,\dots,J\}\,.
	\end{equation*} 
	Then, there exists a positive constant $\Cref{thm: complexity}$ such that 
	\begin{equation*}
		\#\edges_J - \#\edges_0 
		\le\ \Cref{thm: complexity} \, \# \edges_\text M.
	\end{equation*}
\end{theorem}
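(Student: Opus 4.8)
The plan is to follow the standard charging (double-counting) argument for adaptive schemes of Binev--Dahmen--DeVore type, exactly as in the analogous results cited right after Theorem~\ref{thm: locality}. Since edges are only marked inactive and never deleted, $\#\edges_J-\#\edges_0$ equals the total number of edges created over the whole history. By inspection of Algorithm~\ref{alg: subdiv}, each individual subdivision of an edge creates at most four new edges (two children, plus at most one perpendicular edge $\tilde\edge$ per neighbouring element). Writing $R_j$ for the collection of edges subdivided during the top-level call $\mesh_j=\refine(\mesh_{j-1},\edge_{j-1})$ (including all edges subdivided in the recursive calls spawned inside the while-loop), it therefore suffices to prove a bound $\sum_{j=1}^{J}\#R_j\le C\,J$ on the total number of subdivision events, since then $\#\edges_J-\#\edges_0\le 4\sum_j\#R_j$.

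To this end I would assign to each marked edge $\edge_{j-1}$ and each subdivision event $e$ --- say the subdivision of an edge $\edge^*$ of level $m=\level{\edge^*}$ --- the weight
\[
 w(\edge_{j-1},e)=\begin{cases}\gamma^{\,\level{\edge_{j-1}}-m}&\text{if }m\le\level{\edge_{j-1}}\text{ and }\dist(\edge_{j-1},\edge^*)\le C_0\,2^{-m},\\[2pt]0&\text{otherwise,}\end{cases}
\]
for a fixed base $\gamma\in(0,1)$ and a constant $C_0$ proportional to $\Cref{thm: locality}$. The upper bound is the easy half. For fixed $\edge_{j-1}$ I group the events by level $m$ and use that at any level the edges are pairwise disjoint of $\dist$-length $2^{-m}$, and that each spatial segment hosts at most one level-$m$ edge over the whole history (refinement never coarsens). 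Shape regularity of the generated meshes then bounds the number of level-$m$ events inside a $\dist$-ball of radius $C_0\,2^{-m}$ by a constant $N$ independent of $m$ and $j$, whence $\sum_{e}w(\edge_{j-1},e)\le N\sum_{s\ge0}\gamma^{s}=\tfrac{N}{1-\gamma}=:C_1$, and summing over the $J$ marked edges gives $\sum_j\sum_e w(\edge_{j-1},e)\le C_1 J$.

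The main obstacle is the lower bound: I must show every subdivision event $e$ collects a uniformly positive total weight, $\sum_{j}w(\edge_{j-1},e)\ge c_2>0$. The geometric and level data needed here come from Theorem~\ref{thm: locality}: if $\edge^*$ is subdivided in the call marked by $\edge_{j-1}$, then its child $\edge'$ satisfies $\dist(\edge_{j-1},\edge')<\Cref{thm: locality}\,2^{-\level{\edge'}}$ and $\level{\edge'}\le\level{\edge_{j-1}}+1$, so that $m\le\level{\edge_{j-1}}$ and $\dist(\edge_{j-1},\edge^*)\le C_0\,2^{-m}$ after enlarging $C_0$ to absorb the child-to-parent distance; hence the triggering mark always contributes a positive weight $\gamma^{\level{\edge_{j-1}}-m}$. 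The difficulty is that this single contribution degenerates when a high-level marked edge triggers the subdivision of a much coarser edge, so a uniform lower bound cannot come from the triggering mark alone. I would resolve this, as in the cited references, by an induction over the levels that traces each such cascade back along the recursion chain $\edge_{j-1}=\edge_J,\dots,\edge_0$ produced by Algorithm~\ref{alg: ref} (the very chain used in the proof of Theorem~\ref{thm: locality}): the grading $\level{\edge_{k-1}}\le\level{\edge_k}$ along the chain, together with the per-level counting above, guarantees a marked edge whose level lies within a fixed window of $m$ and whose influence region contains $\edge^*$, supplying the required $c_2$. This step crucially uses the separation of extraordinary nodes and the shape regularity established earlier, and I expect it to be the main technical work.

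Combining the two halves by the usual double counting,
\[
 c_2\sum_{j=1}^{J}\#R_j\le\sum_{e}\sum_{j}w(\edge_{j-1},e)=\sum_{j}\sum_{e}w(\edge_{j-1},e)\le C_1 J,
\]
gives $\sum_j\#R_j\le(C_1/c_2)\,J$; multiplying by the per-event edge bound $4$ and recalling $\#\edges_{\mathrm M}=J$ yields $\#\edges_J-\#\edges_0\le\Cref{thm: complexity}\,\#\edges_{\mathrm M}$ with $\Cref{thm: complexity}=4C_1/c_2$. The reduction and the upper bound are routine given Theorem~\ref{thm: locality} and shape regularity; the lower-bound induction is where the real content lies.
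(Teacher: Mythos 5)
Your skeleton is the same as the paper's: the paper also runs a Binev--Dahmen--DeVore-type double counting with a weight $\lambda(\edge',\edge)=2^{\level{\edge'}-\level{\edge}}$, active when $\level{\edge'}\le\level{\edge}+1$ and $\dist(\edge',\edge)<2^{1-\level{\edge'}}\,\Cref{thm: locality}$, which is exactly your $w$ with $\gamma=\tfrac12$ (the paper charges the created child at level $m+1$, you charge the event at the parent level $m$; the activation conditions coincide). Your reduction to subdivision events with the factor $4$, and your upper-bound half, are sound in structure, though your appeal to a level-independent constant $N$ ``by shape regularity'' glosses over the extraordinary nodes: the paper splits the per-level count at $j\approx\log_2(2\Cref{thm: locality}/p)$ and lets the constant depend on the valences and coarse-level data of $\mesh_0$ --- this, not the lower bound, is where the treatment of extraordinary nodes enters. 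That is a bookkeeping issue, not a structural one.

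The genuine gap is in your lower bound, and the mechanism you propose would fail. You claim the backward cascade ``guarantees a marked edge whose level lies within a fixed window of $m$ and whose influence region contains $\edge^*$''. No such single mark need exist. Along the paper's backward chain $\edge_{j_1},\edge_{j_2},\dots$ --- where each $\edge_{j_{i-1}}$ was itself created during the top-level call marked by $\edge_{j_i}$; note this chain runs \emph{across} calls through the marked-edge history and is not the recursion chain inside one call of Algorithm~\ref{alg: ref} used in the proof of Theorem~\ref{thm: locality} --- the locality theorem only yields $\level{\edge_{j_{i-1}}}\le\level{\edge_{j_i}}+1$, so going backward the levels may climb without bound, and every mark in the chain can carry an individually negligible weight $\gamma^{\level{\edge_{j_i}}-m}$. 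The paper closes this case by \emph{aggregation} rather than by locating one good mark: it distinguishes three termination cases (level $0$ reached; level gap; distance condition violated), and in the critical third case, where $\dist(\edge',\edge_{j_L})\ge 2^{1-\level{\edge'}}\,\Cref{thm: locality}$, telescoping the triangle inequality with the per-step bounds $\dist(\edge_{j_{i-1}},\edge_{j_i})\le\Cref{thm: locality}\,2^{-\level{\edge_{j_{i-1}}}}$ forces $\sum_i 2^{-\level{\edge_{j_i}}}\ge 2^{-\level{\edge'}}$ and hence $\sum_i\lambda(\edge',\edge_{j_i})\ge 1$, even though each summand may be tiny. This also shows your base cannot be an arbitrary $\gamma\in(0,1)$: the aggregation needs $\gamma^{\ell-m}\ge 2^{m-\ell}$, i.e.\ $\gamma\ge\tfrac12$. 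For $\gamma<\tfrac12$, a cascade of roughly $2^{L-m}$ marks at level $L$ satisfies the telescoped distance requirement while its total weight behaves like $(2\gamma)^{L-m}\to 0$ as $L\to\infty$, so the telescoping argument yields no uniform $c_2$. With $\gamma=\tfrac12$ fixed and the lower bound argued through this three-case analysis, your outline becomes the paper's proof; as written, the step you yourself flag as ``the main technical work'' is aimed at a claim that is false in the one case where the work is actually needed.
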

\begin{proof}
	We denote by  $\tilde\edges=\bigcup_{\substack{\mesh\text{ refinement of }\mesh_0}}\edges$ the set of all edges in the initial mesh $\mesh_{{0}}$ and in all of its possible refinements.  For any $\edge'\in \tilde\edges$ and $\edge\in\edges_\text M$, let  
	\begin{equation*}
		\lambda(\edge', \edge)\coloneqq
		\begin{cases}
			2^{\ell(\edge')-\ell( \edge)}&\text{if }\ell(\edge')\le\ell( \edge)+1\text{ and }\dist(\edge', \edge)  <  2^{1-\ell(\edge')}\, \Cref{thm: locality},  
			\\[.3em]
			0&\text{otherwise.}
		\end{cases}
	\end{equation*}
	The proof consists of two main steps devoted to identify
	\begin{itemize}
		\item[(i)] a lower bound for the sum of the $\lambda$ function as $ E$ varies in $\edges_\text M$ so that  each $\edge'\in\edges_J\setminus\edges_0$ satisfies 
		\begin{equation}\label{eq:lb}
			\sum_{ \edge\in\edges_\text M}\lambda(\edge', \edge)\ \ge\ 1,
		\end{equation}
		
		\item[(ii)] an upper bound for the sum of the $\lambda$ function as the refined element $\edge'$ varies in $\edges_J\setminus\edges_0$ so that for any $j = 0,\dots,J-1$ 
		\begin{equation}\label{eq:ub}
			\sum_{\edge'\in\edges_J\setminus\edges_0}\lambda(\edge', \edge_j)
			\ \le \Cref{thm: complexity}
		\end{equation}
		holds.
	\end{itemize}
	If inequalities \eqref{eq:lb} and \eqref{eq:ub} hold for a certain constant $\Cref{thm: complexity}$, we have 
	\begin{equation*}\begin{aligned}[b]
			\#\edges_J - \#\edges_0 
			&\le \#(\edges_J\setminus\edges_0)
			= \sum_{\edge'\in\edges_J\setminus\edges_0} 1
			\\&\le\sum_{\edge'\in\edges_J\setminus\edges_0}\sum_{\edge\in\edges_\text M}\lambda(\edge',\edge)
			\le\sum_{\edge\in\edges_\text M}  \Cref{thm: complexity} 
			=  \Cref{thm: complexity} \, \# \edges_\text M, 
	\end{aligned}\end{equation*}
	and the proof of the theorem is complete. We detail below the proofs of (i) and (ii).
	
	(i)\enspace Let $\edge'\in\edges_J\setminus\edges_0$ be an edge generated in the refinement process from $\mesh_0$ to $\mesh_J$, and let $j_1<J$ be the unique index such that $\edge'\in\edges_{j_1+1}\setminus\edges_{j_1}$.  Theorem~\ref{thm: locality}  states that
	\begin{equation*}
		\dist(\edge',\edge_{j_1})\le 2^{-\ell(\edge')}\, \Cref{thm: locality} 
		\quad \text{and}\quad 
		\ell(\edge')\le\ell(\edge_{j_1})+1\,, 
	\end{equation*}  
	and consequently $\lambda(\edge',\edge_{j_1})=2^{\ell(\edge')-\ell(\edge_{j_1})}>0$.
	The repeated use of Lemma~\ref{thm: locality} yields  a sequence $\{\edge_{j_2},\edge_{j_3},\dots\}$  with {$\edge_{j_{i-1}}\in\edges_{j_i+1}\setminus\edges_{j_i}$},  for $j_1>j_2>j_3>\dots$,  such that 
	\begin{equation}\label{eq: complexity -last}
		\dist(\edge_{j_{i-1}},\edge_{j_i})\le 2^{-\ell(\edge_{j_{i-1}})}\, \Cref{thm: locality} 
		\quad\text{and}\quad 
		\ell(\edge_{j_{i-1}})\le\ell(\edge_{j_i})+1.
	\end{equation}
	We iteratively apply Theorem~\ref{thm: locality} as long as 
	\begin{equation*}
		\lambda(\edge',\edge_{j_i})>0 \quad \text{and} \quad 
		\ell(\edge_{j_i})>0\,,
	\end{equation*} 
	until we reach the first index $L$ with $\lambda(\edge',\edge_{j_L})=0$ or $\ell(\edge_{j_L})=0$.  By considering the three possible cases below,  inequality \eqref{eq:lb} may be derived as follows.
	\begin{itemize}
		\item If $\ell(\edge_{j_L})=0$ and $\lambda(\edge',\edge_{j_L})>0$, then
		\begin{equation*}
			\sum_{ \edge\in\edges_\text M}\lambda(\edge', \edge)\ge\lambda(\edge',\edge_{j_L})=2^{\ell(\edge')-\ell(\edge_{j_L})}  >  1
		\end{equation*}
		since $\ell(\edge') > \ell(\edge_{j_L})  =0$. 
		\item If $\lambda(\edge',\edge_{j_L})=0$ because $\ell(\edge')>\ell(\edge_{j_L})+1$, then \eqref{eq: complexity -last} yields $\ell(\edge_{j_{L-1}})\le\ell(\edge_{j_L})+1<\ell(\edge')$ and hence
		\begin{equation*}\sum_{ \edge\in\edges_\text M}\lambda(\edge', \edge)\ge\lambda(\edge',\edge_{j_{L-1}})=2^{\ell(\edge')-\ell(\edge_{j_{L-1}})}>1.\end{equation*}
		\item If $\lambda(\edge',\edge_{j_L})=0$ because $\dist(\edge',\edge_{j_L})  \ge  2^{1-\ell(\edge')}\, \Cref{thm: locality} $, then a triangle inequality combined with Theorem~\ref{thm: locality} leads to
		\begin{equation*}\begin{aligned}[b]
				2^{1-\ell(\edge')}\, \Cref{thm: locality}  &  \le  
				\dist(\edge',\edge_{j_1})+\sum_{i=1}^{L-1}\dist(\edge_{j_i},\edge_{j_{i+1}}) 
				\le 2^{-\ell(\edge')}\, \Cref{thm: locality} +\sum_{i=1}^{L-1} 2^{-\ell(\edge_{j_i})}\, \Cref{thm: locality}\,.
		\end{aligned}\end{equation*}
		Consequently, $2^{-\ell(\edge')} \le\sum_{i=1}^{L-1} 2^{-\ell(\edge_{j_i})}$ and we obtain
		\begin{equation*} 
			1\ \le\ \sum_{i=1}^{L-1} 2^{\ell(\edge')-\ell(\edge_{j_i})}\ =\ \sum_{i=1}^{L-1} \lambda(\edge',\edge_{j_i})\ \le\ \sum_{ \edge\in\edges_\text M}\lambda(\edge', \edge).\end{equation*}
	\end{itemize}
	
	(ii)\enspace Inequality \eqref{eq:ub} can be derived as follows. For any $0\le j\le J-1$, we consider the set of edges of level $j$ whose distance from $\edge$ is less than $2^{1-j}\Cref{thm: locality}$, defined as 
	\begin{equation*}  A(\edge,j) \sei\bigl\{\edge'\in  \edges(\meshuni j) \mid \dist(\edge', \edge)  < 2^{1-j}\Cref{thm: locality}\}. \end{equation*}
	According to the definition of $\lambda$, the set $A(\edge,j)$ contains all elements at level $j$ that satisfy $\lambda(\edge', \edge)>0$.  We have
	\begin{equation}\label{eq:sumb}
		\sum_{\edge'\in\edges_J\setminus\edges_0}\lambda(\edge', \edge)
		\le \sum_{\edge'\in\bigcup_{j\in\nat}\edges(\meshuni j)\setminus\edges_0}\lambda(\edge', \edge)
		= \sum_{j=1}^{\ell( \edge)+1}2^{j-\ell( \edge)}\,\# 
		A(\edge,j). 
	\end{equation}
	
	If $j<\log_2(2\Cref{thm: locality}/p)$,
	then  $\# A(\edge,j)$ is bounded by $\# A(\edge,j)\le 4^j\# A(\edge,0)$, where $\# A(\edge,0)$ depends only on the initial mesh, in particular on the number of distinct direction indices, and on the number, locations, and valences of extraordinary nodes.
	
	If on the other hand $j\ge\log_2(2\Cref{thm: locality}/p)$,
	we observe that the neighborhood of any edge $\edge\in\edges(\meshuni j)$ contains at most one extraordinary node $v_\edge$. We set $\operatorname{val}(\edge)=\max(4,\text{valence of }v_\edge)$ and observe that 
	$\# A(\edge,j)\le 2\cdot(2\Cref{thm: locality}+1)^2\cdot\operatorname{val}(\edge)$.
	
	Together, we find that
	\begin{equation*}
		\#A(\edge,j) 
		\le \max\bigl\{4^{\log_2(2\Cref{thm: locality}/p)}\# A(\edge,0),2\cdot(2\Cref{thm: locality}+1)^2\cdot\operatorname{val}(\edge)\bigr\}
		\eqqcolon C(\edge).
	\end{equation*}
	
	Finally, the index substitution $k\coloneqq1-j+\ell( \edge)$ reduces \eqref{eq:sumb} to 
	\begin{equation*}\begin{aligned}[b]
			\sum_{\edge'\in\edges_J\setminus\edges_0}\lambda(\edge', \edge) & \le
			\sum_{j=1}^{\ell( \edge)+1}2^{j-\ell( \edge)} \# A(\edge,j) 
			= 
			\sum_{k=0}^{\ell( \edge)}2^{1-k} \# A(\edge,j) \\
			& < 2\sum_{k=0}^\infty 2^{-k} \# A(\edge,j) 
			\le 4\max_{\edge\in\bigcup_{j\in\nat}\edges(\meshuni j)\setminus\edges_0}C(\edge)\,\eqqcolon\Cref{thm: complexity},
	\end{aligned}\end{equation*}
	which completes the proof.
\end{proof}
{With Theorems~\ref{thm: locality} and~\ref{thm: complexity}, we have that the refinement obtained by Algorithm~\ref{alg: ref} remains local and exhibits linear complexity.}

\subsection{Linear independence}

In this subsection, we show that the T-spline basis functions defined over a refined T-mesh are linearly independent.
\begin{theorem}[quasi-uniformity of generated meshes]\label{thm: quasi-uniformity}%
	For any T-mesh $\mesh=(\nodes,\edges,\elements)$ generated by $\refine$ (see Algorithm~\ref{alg: ref}), the edge levels in any edge neighborhood are bounded from above and below by
	\begin{equation}\label{eq: quasi-uniformity}
		\Forall \edge\in\edges,\ \edge' \in \neighb(\mesh,\edge):\left\{
		\begin{alignedat}{2}
			\ell(\edge) &\le\ell(\edge')\le\ell(\edge)+1 &\quad\text{if } \di(\edge')&<\di(\edge) 
			\\ \ell(\edge)-1 &\le\ell(\edge')\le\ell(\edge)+1 &\quad\text{if } \di(\edge')&=\di(\edge) 
			\\ \ell(\edge)-1 &\le\ell(\edge')\le\ell(\edge)   &\quad\text{if } \di(\edge')&>\di(\edge).
		\end{alignedat}\right.
	\end{equation}
\end{theorem}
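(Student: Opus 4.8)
The plan is to prove~\eqref{eq: quasi-uniformity} as an invariant preserved by every single edge subdivision carried out by \refine{} (Algorithm~\ref{alg: ref}), arguing by induction on the total number of \subdiv{} calls executed over the whole run. On the initial mesh $\mesh_0$ every edge has level $0$, so for any $\edge'\in\neighb(\mesh_0,\edge)$ all three cases of~\eqref{eq: quasi-uniformity} reduce to inequalities among $-1,0,1$ that hold trivially, settling the base case. For the inductive step I would isolate the engine of the argument: each \subdiv{} is the final line of some call $\refine(\mesh,\edge^*)$, so at the moment $\edge^*$ (of level $m:=\level{\edge^*}$ and index $d:=\di(\edge^*)$) is subdivided, the \textbf{while}-loop has just terminated, which yields the precondition
\[
 \Forall \edge'\in\neighb(\mesh,\edge^*):\quad \level{\edge'}\ge m \ \text{ and }\ \bigl(\level{\edge'}=m\Rightarrow\di(\edge')\ge d\bigr).
\]
I would then combine this precondition with the inductive hypothesis that~\eqref{eq: quasi-uniformity} holds for the pre-subdivision mesh.

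Next I would reduce to the pairs that actually change. A \subdiv{} replaces $\edge^*$ by two children of level $m+1$ and index $d$, and possibly inserts perpendicular edges $\tilde\edge$ whose level and index are inherited from their (already subdivided) opposite edges; every other edge keeps its midpoint, level, and index, while the metric $\dist$ is fixed once and for all. Hence any pair of surviving edges still satisfies~\eqref{eq: quasi-uniformity} by the inductive hypothesis (deleting the inactive $\edge^*$ only removes constraints), and it remains to verify the bound for pairs $(\edge,\edge')$ in which at least one edge is a child of $\edge^*$ or a new $\tilde\edge$. Using $\dist(\midp(\edge^*_i),\midp(\edge^*))=2^{-m-2}$ together with the integrality of $\tfrac{p+1}2$ (recall $p$ is odd), the distance bookkeeping deciding membership of a child in a neighbourhood reduces to that of $\edge^*$ up to this controlled shift; the $\tilde\edge$ edges, being interior to a freshly split element and carrying the level and index of an existing edge, are handled by the same estimates.

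The heart of the proof is the dichotomy according to the level of the partner edge $\edge$ of a child $\edge^*_i$. When $\level{\edge}=m$, equal levels make the neighbourhoods symmetric (both radii equal $\tfrac{p+1}2\,2^{-m}$), so $\edge^*_i\in\neighb(\mesh,\edge)$ forces $\edge\in\neighb(\mesh,\edge^*)$; the precondition then gives $\di(\edge)\ge d$, which places the pair in the regime $\di(\edge^*_i)\le\di(\edge)$ and makes $\level{\edge}\le\level{\edge^*_i}\le\level{\edge}+1$ follow at once, and symmetrically for $(\edge^*_i,\edge)$. The genuinely hard regime is $\level{\edge}\le m-1$, i.e.\ a newly created level-$(m{+}1)$ edge landing in the neighbourhood of a strictly coarser edge $\edge$ with $\edge\notin\neighb(\mesh,\edge^*)$: here the precondition says nothing about $\edge$, and the static invariant alone would permit a forbidden gap of two levels.

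The main obstacle is exactly this asymmetric coarse case, which I expect to require the refinement \emph{dynamics} rather than a purely static induction. The mechanism to exploit is that $\edge^*$ could only have reached level $m$ through subdivisions of its ancestors, whose neighbourhoods at the coarser scale have radius $\tfrac{p+1}2\,2^{-\level{\cdot}}$ large enough to contain $\edge$; the \textbf{while}-condition of \refine{}, and in particular the \emph{second}, direction-index clause $\di(\edge')<\di(\edge)$, then forces $\edge$ itself to be refined, shrinking its neighbourhood so that the fine children $\edge^*_i$ no longer satisfy $\edge^*_i\in\neighb(\mesh,\edge)$. Formalising this amounts to showing that the three cases of~\eqref{eq: quasi-uniformity} are \emph{jointly} inductive: the direction-index ordering is precisely what couples the level of an edge to the levels admissible in its neighbourhood. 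I would therefore carry the full three-case statement through the induction simultaneously, using the equal-level symmetry for the tie cases and the cascade argument above, and I expect the delicate point to be the boundary of each neighbourhood ball, controlled through the dyadic quantisation of $\dist$ and the fact that $\tfrac{p+1}2\in\nat$.
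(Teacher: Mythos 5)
Your scaffolding matches the paper's proof closely: induction over single \subdiv{} calls, with the while-loop termination of \refine{} supplying exactly the paper's notion of an \emph{admissible subdivision} (their condition \eqref{eq: cond admissible subdiv}), the combination of this precondition with the inductive hypothesis into sharpened bounds around the subdivided edge, the reduction to pairs involving the children $\edge_1,\edge_2$ or the perpendicular edges $\tilde\edge$ (whose level and direction index are inherited from opposite edges), and the symmetric-membership argument when levels are equal. However, in the one case you yourself identify as the heart of the matter --- a new child of level $m+1$ lying in $\neighb(\mesh,\edge)$ for a strictly coarser edge $\edge$ --- your proposal has a genuine gap. The ancestor-cascade you invoke does not work quantitatively. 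Membership only gives $\dist(\edge,\edge_1)\le\tfrac{p+1}2\,2^{-\ell(\edge)}$, so $\edge$ may sit near the rim of its own large ball; an ancestor $\edge_a$ of $\edge^*$ at level $k>\ell(\edge)$ has neighbourhood radius $\tfrac{p+1}2\,2^{-k}$, at most half that size, and the midpoints along the ancestor chain drift by less than $2^{-k-1}$ in total. Hence $\edge$ generally lies \emph{outside} $\neighb(\cdot,\edge_a)$ for every ancestor finer than level $\ell(\edge)$, and the while-loop precondition at those earlier subdivisions imposes no constraint on $\edge$ whatsoever. At the unique ancestor of level exactly $\ell(\edge)$, the precondition merely requires $\ell(\edge)\ge\ell(\edge_a)$ (strictly only when $\di(\edge)<\di(\edge_a)$), which is satisfied, so no contradiction arises and nothing ever ``forces $\edge$ to be refined.'' (That phrase is also internally confused: had $\edge$ been refined, it would be inactive and replaced by children, not persist with a shrunken neighbourhood.)

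Moreover, your premise that ``the static invariant alone would permit a forbidden gap of two levels'' is what the paper refutes: the static induction \emph{does} close this case, by means of the dyadic quantisation of $\dist$ that you relegate to a peripheral remark but which is in fact the decisive tool. In the paper's Case~3 of part \textsf{(iv)}, the midpoints of the coarse edge $\edge'$ and the parent $\edge$ are nodes of the uniform mesh $\meshuni{L}$ with $L=\ell(\edge)+1=\ell(\edge_1)$, so $\dist(\edge',\edge)=n\cdot 2^{-L}$ with $n\in\nat$; the triangle inequality gives $n\le\tfrac{p+1}2\,2^{L-\ell(\edge')}+\tfrac12$, and since $\tfrac{p+1}2\,2^{L-\ell(\edge')}$ is an integer, this improves to $n\le\tfrac{p+1}2\,2^{L-\ell(\edge')}$. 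In other words, membership of the \emph{child} in the coarse edge's neighbourhood transfers to membership of the \emph{parent}, $\edge\in\neighb(\mesh,\edge')$, reducing the hard case to one where the induction hypothesis in $\neighb(\mesh,\edge')$ applies directly; the remaining borderline combinations (level gap two with $\di(\edge_1)\le\di(\edge')$, or gap one with $\di(\edge_1)>\di(\edge')$) are then excluded by a symmetric-membership contradiction that re-invokes the loop precondition. So the missing idea in your proposal is precisely this integrality-based membership transfer; the refinement history is not needed, and the cascade you propose in its place fails at the first ancestor finer than $\ell(\edge)$.
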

\begin{proof}
	We prove the theorem by induction over admissible subdivisions.
	We call  $\check\mesh=\subdiv(\mesh,\edge)$ an \emph{admissible subdivision} if 
	\begin{equation}\label{eq: cond admissible subdiv}
		\Forall \edge'\in\neighb(\mesh,\edge):
		\begin{cases}
			\ell(\edge')>\ell(\edge) &\text{if }\di(\edge')<\di(\edge)
			\\ \ell(\edge')\ge\ell(\edge) &\text{if }\di(\edge')\ge\di(\edge).
		\end{cases}
	\end{equation}
	Note that $\refine$ always performs a sequence of admissible subdivisions. Our approach is therefore appropriate for proving the theorem for all T-meshes.
	
	The claim is true for the initial mesh $\mesh_0$ since all edges $\edge,\edge'\in\edges_0$ satisfy $\ell(\edge)=\ell(\edge')=0$.
	For the induction proof, we consider a T-mesh $\mesh$ that satisfies \eqref{eq: quasi-uniformity} and $\edge\in\edges$ satisfying \eqref{eq: cond admissible subdiv}. Together, this yields
	\begin{equation}\label{eq:thm:qu: level bounds}
		\Forall \edge' \in \neighb(\mesh,\edge):\left\{
		\begin{alignedat}{2}
			\ell(\edge')&=\ell(\edge)+1 &\quad\text{if } \di(\edge')&<\di(\edge) 
			\\ \ell(\edge)\, &\le\ell(\edge')\le\ell(\edge)+1 &\quad\text{if } \di(\edge')&=\di(\edge) 
			\\ \ell(\edge')&=\ell(\edge)   &\quad\text{if } \di(\edge')&>\di(\edge).
		\end{alignedat}\right.
	\end{equation}
	We aim to show that $\check\mesh=\subdiv(\mesh,\edge)$ also fulfills the claim.
	The subdivision of $\edge$ removes the edge $\edge$ and adds two children $\edge_1$, $\edge_2$ to the mesh, and maybe $\tilde \edge$ or $\tilde \edge_1,\tilde \edge_2$ if one or two neighboring quadrilaterals are subdivided.
	We will below show the claim for
	\begin{enumerate}[label=\textsf{(\roman*)}]
		\item $\edge_1,\edge_2$, \label{enum:thm:qu: children}
		\item $\edge'\in\check\edges\cap\edges$ with $\edge\in\neighb(\mesh,\edge')$, \label{enum:thm:qu: oldEinNE}
		\item $\tilde \edge_1,\tilde \edge_2$, \label{enum:thm:qu: tildeE}
		and
		\item $\edge'\in\check\edges\cap\edges$ with $\edge\notin\neighb(\mesh,\edge')$. \label{enum:thm:qu: oldEnotinNE}
	\end{enumerate}
	\newcommand{\enumref}[1]{\noindent\ref{#1}\enspace}
	
	\enumref{enum:thm:qu: children} Any edge $\edge'\in\neighb(\check\mesh,\edge_1)$ satisfies
	\[ \dist(\edge',\edge)\le \dist(\edge',\edge_1) + \dist(\edge_1,\edge) 
	\le \tfrac{p+1}2\cdot2^{-\ell(\edge_1)} + 2^{-1-\ell(\edge_1)}
	\le  \tfrac{p+1}2\cdot2^{-\ell(\edge)}
	\]
	since $\tfrac{p+1}2\ge \frac12$. Hence all edges $\edge'\in\neighb(\check\mesh,\edge_1)\cap\edges$ are in $\neighb(\mesh,\edge)$ and satisfy \eqref{eq:thm:qu: level bounds}. With $\ell(\edge_1)=\ell(\edge)+1$ and $\di(\edge_1) = \di(\edge)$,
	we can rewrite this as
	\begin{equation}\label{eq:thm:qu: level bounds for children}
		\Forall \edge' \in \neighb(\check\mesh,\edge_1)\cap\edges:\left\{
		\begin{alignedat}{2}
			\ell(\edge')&=\ell(\edge_1) &\quad\text{if } \di(\edge')&<\di(\edge_1) 
			\\ \ell(\edge_1)-1 &\le\ell(\edge')\le\ell(\edge_1) &\quad\text{if } \di(\edge')&=\di(\edge_1) 
			\\ \ell(\edge')&=\ell(\edge_1)-1   &\quad\text{if } \di(\edge')&>\di(\edge_1).
		\end{alignedat}\right.
	\end{equation}
	For the remaining edges $\edge'\in \neighb(\check\mesh,\edge_1)\setminus\edges$, which are $\edge_1,\edge_2,\tilde \edge_1,\tilde \edge_2$ from above, we know that $\ell(\edge_2)=\ell(\edge_1)=\ell(\edge)+1$ and that $\tilde \edge_1,\tilde \edge_2$ have the same level and direction index as their opposite edges (see Definitions \ref{df: direction index} and \ref{df: refinement-level}), at least one of which is in $\neighb(\check\mesh,\edge_1)\cap\edges$ and satisfies \eqref{eq:thm:qu: level bounds for children}.
	Hence all new edges also satisfy \eqref{eq:thm:qu: level bounds for children} and the claim is fulfilled by $\edge_1$, and similarly by $\edge_2$.

	\enumref{enum:thm:qu: oldEinNE} Let $\edge'\in\check\edges\cap\edges$ with $\edge\in\neighb(\mesh,\edge')$. The induction hypothesis yields that 
	\begin{equation}\label{eq:thm:qu: level bounds tildeE}
		\Forall  \edge'' \in \neighb(\mesh,\edge'):\left\{
		\begin{alignedat}{2}
			\ell(\edge') &\le\ell(\edge'')\le\ell(\edge')+1 &\quad\text{if } \di(\edge'')&<\di(\edge') 
			\\ \ell(\edge')-1 &\le\ell(\edge'')\le\ell(\edge')+1 &\quad\text{if } \di(\edge'')&=\di(\edge') 
			\\ \ell(\edge')-1 &\le\ell(\edge'')\le\ell(\edge')   &\quad\text{if } \di(\edge'')&>\di(\edge').
		\end{alignedat}\right.
	\end{equation}
	Since $\edge'$ is unchanged, all old neighbor edges $\edge'' \in \neighb(\check\mesh,\edge')\cap\neighb(\mesh,\edge')$ satisfy \eqref{eq:thm:qu: level bounds tildeE} also in the new mesh $\check\mesh$.
	The set of new neighbor edges $\neighb(\check\mesh,\edge')\setminus\neighb(\mesh,\edge')$ 
	may contain at most $\edge_1,\edge_2,\tilde \edge_1$ and $\tilde \edge_2$, and it contains at least one of the children $\edge_1,\edge_2$.
	Since \eqref{eq:thm:qu: level bounds tildeE} also holds for $\edge''=\edge$ with $\ell(\edge_1)=\ell(\edge)+1$ and $\di(\edge_1)=\di(\edge)$, we have
	\[\begin{alignedat}{2}
		\ell(\edge')+1 &\le\ell(\edge_1)\le\ell(\edge')+2 &\quad\text{if } \di(\edge_1)&<\di(\edge') 
		\\ \ell(\edge') &\le\ell(\edge_1)\le\ell(\edge')+2 &\quad\text{if } \di(\edge_1)&=\di(\edge') 
		\\ \ell(\edge') &\le\ell(\edge_1)\le\ell(\edge')+1   &\quad\text{if } \di(\edge_1)&>\di(\edge').
	\end{alignedat}
	\]
	Assume for contradiction that $\di(\edge_1)\le\di(\edge')$ and $\ell(\edge_1)=\ell(\edge')+2$ or that 
	$\ell(\edge_1)=\ell(\edge')+1$ and $\di(\edge_1)>\di(\edge')$.
	We conclude that $\ell(\edge)=\ell(\edge')+1$ or $\ell(\edge)=\ell(\edge')$, respectively, and together with $\edge\in\neighb(\mesh,\edge')$ that $\dist(\edge,\edge')\le \tfrac{p+1}2\cdot 2^{-\ell(\edge')}\le \tfrac{p+1}2\cdot 2^{-\ell(\edge)}$ and hence $\edge'\in\neighb(\mesh,\edge)$. Thus, $\edge'$ satisfies \eqref{eq:thm:qu: level bounds} which leads to contradiction in both cases.
	This shows the claim for $\edge_1 \in \neighb(\check\mesh,\edge')$, and similarly for $\edge_2$.
	
	Consider $\tilde \edge\in\{\tilde \edge_1,\tilde \edge_2\}$, which has two opposite edges with the same refinement level and direction index. At least one of these opposite edges is in $\neighb(\check\mesh,\edge')\cap\neighb(\mesh,\edge')$ and satisfies \eqref{eq:thm:qu: level bounds tildeE}, and so does $\tilde \edge$.
	
	\enumref{enum:thm:qu: tildeE} 
	Consider again $\tilde \edge\in\{\tilde \edge_1,\tilde \edge_2\}$, which has two opposite edges $\tilde \edge_1,\tilde \edge_2$ with the same refinement level and direction index. This and the construction of the metric $\dist$ yield that 
	\[\neighb(\check\mesh,\tilde \edge)\subseteq \neighb(\check\mesh,\tilde \edge_1)\cup \neighb(\check\mesh,\tilde \edge_2).\]
	The claim has been shown for $\tilde \edge_1,\tilde \edge_2$ in \ref{enum:thm:qu: oldEinNE}, and it also holds for $\tilde \edge$ due to the equality of levels and direction indices.
	
	\enumref{enum:thm:qu: oldEnotinNE} We consider $\edge'\in\check\edges\cap\edges$ with $\edge\notin\neighb(\mesh,\edge')$. In order to prove the claim, we need to show that any new edges $\edge''\in\neighb(\check\mesh,\edge')\setminus\neighb(\mesh,\edge')$ satisfy \eqref{eq: quasi-uniformity}. We therefore distinguish three cases.
	
	\emph{Case 1:} $\tilde \edge\in\neighb(\check\mesh,\edge')\setminus\neighb(\mesh,\edge')$ from the subdivision of a neighbor element of $\edge$. The edge $\tilde \edge$ has two opposite edges $\tilde \edge_1,\tilde \edge_2$ with the same refinement level and direction index.
	Due to the construction of the metric $\dist$, at least one edge $\tilde \edge\in\{\tilde \edge_1,\tilde \edge_2\}$ is in $\neighb(\check\mesh,\edge')\cap\neighb(\mesh,\edge)$ and satisfies \eqref{eq:thm:qu: level bounds tildeE}, and $\tilde \edge$ satisfies the same due to the equality of levels and direction indices.
	
	\emph{Case 2:} $\edge_1\in\neighb(\check\mesh,\edge')\setminus\neighb(\mesh,\edge')$, with $\edge_1$ being a child of $\edge$ and $\ell(\edge')\ge\ell(\edge)+1$.
	In this case we get
	\begin{align*}
		\dist(\edge',\edge)&\le \dist(\edge',\edge_1) + \dist(\edge_1,\edge) 
		\le \tfrac{p+1}2\cdot2^{-\ell(\edge')} + 2^{-2-\ell(\edge)}
		\\&\le \bigl(\tfrac{p+1}2+\tfrac12\bigr)\cdot2^{-1-\ell(\edge)}
		\le \tfrac{p+1}2\cdot2^{-\ell(\edge)}
		\\\Rightarrow\quad \edge'&\in\neighb(\mesh,\edge)
		\quad\stackrel{\eqref{eq:thm:qu: level bounds}}\Rightarrow\quad \ell(\edge')= \ell(\edge)+1=\ell(\edge_1).
	\end{align*}
	
	\emph{Case 3:} $\edge_1\in\neighb(\check\mesh,\edge')\setminus\neighb(\mesh,\edge')$, with $\edge_1$ being a child of $\edge$ and $\ell(\edge')\le\ell(\edge)$.
	The edge midpoints of both $\edge'$ and $\edge$ are nodes in the uniform mesh $\meshuni L$ of level $L=\max(\ell(\edge'),\ell(\edge))+1 = \ell(\edge)+1=\ell(\edge_1)$. By definition of the mesh metric $\dist$, we have $\dist(\edge',\edge)=\dist(\midp(\edge'),\midp(\edge))=n\cdot2^{-\ell(\edge_1)}$ with $n\in\nat$ being the minimal number of elements connecting $\midp(\edge')$ and $\midp(\edge)$ in the mesh $\meshuni L$.
	A triangle inequality and $\edge_1\in\neighb(\check\mesh,\edge')$ yield
	\begin{equation*}
		\dist(\edge',\edge)\le \dist(\edge',\edge_1) + \dist(\edge_1,\edge) 
		\le \tfrac{p+1}2\cdot2^{-\ell(\edge')} + 2^{-1-\ell(\edge_1)}
		= \bigl(\tfrac{p+1}2 \cdot2^{\ell(\edge_1)-\ell(\edge')} + \tfrac12 \bigr)\cdot2^{-\ell(\edge_1)}.
	\end{equation*}
	Hence, we expect $n$ to be bounded by $n\le \tfrac{p+1}2 \cdot2^{\ell(\edge_1)-\ell(\edge')} + \tfrac12$. Since both $n$ and $\tfrac{p+1}2 \cdot2^{\ell(\edge_1)-\ell(\edge')}$ are integer numbers, we conclude that $n\le \tfrac{p+1}2 \cdot2^{\ell(\edge_1)-\ell(\edge')}$, hence
	$\dist(\edge',\edge)\le \tfrac{p+1}2\cdot2^{-\ell(\edge')}$ and finally $\edge\in\neighb(\mesh,\edge')$ in contradiction to the assumption above.
\end{proof}

\begin{df}[extension and skeleton of T- and I-nodes]
	Given a T-node or boundary I-node $\node\in\nodes$ (see Definition~\ref{df: T-node and I-node}), 
	there is a unique element $\element\in\elements$ with two common edges of equal direction index, $\edges(\node)\cap\edges(\element)=\{\edge_1,\edge_2\}$, $\edge_1\ne\edge_2$, $\di(\edge_1)=\di(\edge_2)$,
	that have a common opposite edge $\edge_\text{o}\in\edges(\element)$.
	The \emph{first-order extension} $\ext^1(\node)$ is a singleton set containing the edge that connects $\node$ with the midpoint of $\edge_\text{o}$.
	For an interior I-node $\node$, there are two such neighboring elements $\element_1,\element_2$, and $\ext^1(\node)$ is a set of two edges related to these two elements.
	In both cases, we have $\ext^1(\node)\subset\bedges\setminus\edges$.
	
	The \emph{T-node extension} (also for I-nodes) is the $\frac{p-1}2$-th prolongation of $\ext^1(\node)$, 
	\[ \ext(\node) = \ep^{(p-1)/2}(\ext^1(\node))\subset\bedges.\] 
	
	For T-nodes and boundary I-nodes,
	we define the \emph{first-order skeleton} of $\node$ as $\sk(\ext^1(\node))=\{\edge_1,\edge_2,\edge_\text o\}$, and 
	for interior I-nodes, $\sk(\ext^1(\node))=\{\edge_1,\edge_2,\edge_{\text o,1},\edge_{\text o,2}\}$,  with $\edge_{\text o,1}\in\edges(\element_1)$ and $\edge_{\text o,2}\in\edges(\element_2)$ the common opposite edges of $\edge_1$ and $\edge_2$.
	The \emph{$n$-th-order skeleton} is the union of the $(n-1)$-th-order skeleton and their opposite edges, for $n=1,\dots,\tfrac{p+1}2$.
	The \emph{extension skeleton} $\sk(\ext(\node))$ of $\node$ is defined as its $\tfrac{p+1}2$-th-order skeleton.
	
	Note that all edges in $\sk(\ext(\node))$ have the same direction index due to the criterion \eqref{eq: di criterion}. Hence, we can associate to each T-node or I-node the unique direction index of its extension skeleton,
	and we call $\node$ with $\di(\sk(\ext(\node)))=\{j\}$ a \emph{$j$-orthogonal} T-node (or I-node).
\end{df}

\begin{theorem}\label{thm: analysis-suitable}
	For any T- or I-nodes {$\node,\node'$} in a mesh $\mesh$ generated by $\refine$, where {$\node$ is $i$-orthogonal, $\node'$ is $j$-orthogonal, and $i\ne j$}, it holds $\ext({\node})\cap\ext({\node'})=\emptyset$, i.e., the T-node extensions do not intersect.
\end{theorem}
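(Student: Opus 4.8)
The plan is to argue by contradiction: assume $\ext(\node_i)\cap\ext(\node_j)\neq\emptyset$ and derive a contradiction from the quasi-uniformity Theorem~\ref{thm: quasi-uniformity} together with the direction-index ordering. First I would reduce to a structured local picture. By Assumption~\ref{assu:extraordinarynodes} no knot-line extension enters the $p$-disk of an extraordinary node and distinct such $p$-disks are disjoint, so $\ext(\node_i)$, $\ext(\node_j)$ and any intersection point lie in a region of $\mesh$ free of extraordinary nodes; there the mesh is structured, only two direction indices occur, and I may work with genuine coordinate directions. After possibly swapping the two nodes, assume $i<j$. By the definition of the extension skeleton, the skeleton of the $i$-orthogonal node $\node_i$ carries direction index $i$, so its extension edges $\ext(\node_i)$ carry the other index $j$; symmetrically $\ext(\node_j)$ carries index $i$. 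Hence the two extensions run in perpendicular directions and, since mesh edges meet only in nodes, an intersection is a transversal crossing at a common B\'ezier node $P$.

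Next I would record the two asymmetric level facts that the ordering $i<j$ produces through Theorem~\ref{thm: quasi-uniformity}. Writing $m_i$ for the finer level of the split skeleton edges of $\node_i$ and $a_i$ for the level of its extension edges, the extension edges have the larger index $j>i$ relative to the fine skeleton edges of index $i$ in their neighborhood, so the third case of \eqref{eq: quasi-uniformity} gives $a_i\le m_i$: the extension of the lower-orthogonality node is comparatively \emph{coarse}. Symmetrically, with $m_j,a_j$ for $\node_j$, the extension edges now carry the \emph{smaller} index $i<j$ relative to the fine skeleton edges of index $j$, so the first case gives $a_j\ge m_j$: the extension of the higher-orthogonality node is comparatively \emph{fine}. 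These bounds, together with the level rules of Definition~\ref{df: refinement-level} that fix the levels of the edges created along an extension (children inherit $+1$, connecting edges inherit the level of their opposite edges), form the quantitative heart of the argument.

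I would then examine the crossing. Applying \eqref{eq: quasi-uniformity} to the two edges $e_i\in\ext(\node_i)$ and $e_j\in\ext(\node_j)$ meeting at $P$ (they lie in each other's neighborhoods because they share $P$ and $\tfrac{p+1}2\ge\tfrac12$) yields $a_i\le a_j\le a_i+1$; combined with $a_j\ge m_j$ and $a_i\le m_i$ this already forces $m_j\le m_i+1$. To turn this into a contradiction I would follow the prolongation chain of the \emph{coarse} extension $\ext(\node_i)$ from $\node_i$ out to $P$, each step governed by $\ep$ and by the inheritance rules of Definition~\ref{df: refinement-level}, and reconcile the presence of the \emph{fine} edges of $\ext(\node_j)$ of index $i$ at and beyond $P$ (level $a_j\ge m_j$), via the first case of \eqref{eq: quasi-uniformity}, with the coarse index-$j$ structure that the very existence of the T-junction $\node_i$ imposes along its extension line. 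Tracking the coarse side $\edge_\text{o}$ of each skeleton jump through the level-dependent neighborhood radii $\tfrac{p+1}2\,2^{-\ell(\cdot)}$ then contradicts the admissibility condition that $\refine$ enforces whenever a higher-index edge is subdivided. Interior I-nodes are handled identically, treating their two-sided first-order extension one side at a time, and boundary I-nodes are the one-sided T-node case.

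The main obstacle is exactly this last conversion: the naive crossing estimate is symmetric and merely \emph{consistent}, so the contradiction must be extracted from the asymmetry $a_i\le m_i$, $a_j\ge m_j$ combined with the \emph{length} of the coarse extension — because coarse edges are long, $P$ may lie far from $\node_i$ and hence outside the cleanest one-step neighborhood comparison with $\node_i$'s fine skeleton. Making the chain argument rigorous requires carefully propagating the upper level bound along the $\tfrac{p-1}2$-fold prolongation and comparing physical distances against the neighborhood radius at each step. Should the direct bookkeeping become unwieldy, a safe alternative is to transport the whole configuration through the embedding of Definition~\ref{df: embedding} (here with $d=2$, since no extraordinary node is involved) into a structured tensor-product mesh and invoke the analysis-suitability of the structured refinement in \cite{Morgenstern:2017}.
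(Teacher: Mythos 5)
There is a genuine gap, and it is twofold. First, your quantitative machinery is applied outside its scope: the extension edges $\ext(\node_i),\ext(\node_j)$ lie in $\bedges\setminus\edges$, i.e.\ they are \emph{not} edges of the mesh $\mesh$, whereas Theorem~\ref{thm: quasi-uniformity} is stated and proved (by induction over admissible subdivisions performed by $\refine$) only for edges $\edge\in\edges$ and their neighborhoods $\neighb(\mesh,\edge)$. The B\'ezier subdivisions of Algorithm~\ref{alg: bezier mesh} are forced $\subdiv$ calls without the neighborhood closure that $\refine$ enforces, so quasi-uniformity does not automatically transfer to $\bedges$; your crossing estimate ``$a_i\le a_j\le a_i+1$'' at the common point $P$, and likewise the bounds $a_i\le m_i$, $a_j\ge m_j$, would all need a separate lemma assigning levels and neighborhood relations to extension edges and extending \eqref{eq: quasi-uniformity} to them. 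Second, you candidly acknowledge that the decisive step — converting the asymmetry $a_i\le m_i$, $a_j\ge m_j$ into a contradiction — is not carried out: as you note, the one-step crossing comparison is symmetric and merely consistent, and the proposed ``chain bookkeeping'' along the $\tfrac{p-1}{2}$-fold prolongation is left as a plan. The fallback via the embedding of Definition~\ref{df: embedding} does not repair this: that embedding is constructed only for $p$-disks of extraordinary nodes under Assumption~\ref{assu:extraordinarynodes}, and invoking analysis-suitability of structured refinement from \cite{Morgenstern:2017} would additionally require showing that the local refinement history is admissible for that algorithm, which you do not do.

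The paper's proof avoids the crossing point entirely and never touches B\'ezier edges. It works one-sidedly from the $j$-orthogonal node $\node_j$: since $\mesh$ is generated by $\refine$, the subdivision of the parent edge $\edge_\text p$ of the fine skeleton edge $\edge_1$ was admissible in the sense of \eqref{eq: cond admissible subdiv}, and Theorem~\ref{thm: quasi-uniformity} applies to the coarse opposite edge $\edge_\text o$ (a genuine, unrefined mesh edge). Combining the two with $\ell(\edge_\text p)=\ell(\edge_\text o)$ yields \emph{exact} levels on the joint neighborhood $\neighb(\mesh,\edge_\text o)\cap\neighb(\mesh,\edge_\text p)$: every edge there with $\di<j$ has level exactly $\ell(\edge_\text o)+1$, every edge with $\di>j$ level exactly $\ell(\edge_\text o)$, and only edges with $\di=j$ may vary by one level. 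This rigidity means elements in a region of $p$ elements in the $j$-orthogonal direction times $p+2$ elements in the other direction(s), centered at the element carrying the T-junction, can only have been subdivided in direction $j$ — so no $i$-orthogonal T- or I-node with $i\ne j$ can exist close enough for its extension to reach $\ext(\node_j)$. Your instinct to exploit the level/direction-index asymmetry was correct in spirit, but the workable formulation localizes it at the pair $(\edge_\text p,\edge_\text o)$ of mesh edges rather than at a hypothetical intersection of virtual extension edges.
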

\begin{proof}
	Analogously to the definition above, there is $\element\in\elements({\node'})$  with common edges $\edge_1,\edge_2\in\edges({\node'})\cap\edges(\element)$ with $\di(\edge_1)=\di(\edge_2)=j$, and an opposite edge $\edge_\text o\in\edges(\element)$ with $\di(\edge_\text o)=j$.
	Since the mesh $\mesh$ is generated by $\refine$, the subdivision of $\edge_1$'s parent edge $\edge_\text p$ was admissible in the sense of \eqref{eq: cond admissible subdiv} and we conclude, since the mesh may have been refined further,  that 
	\[ \Forall \edge'\in\neighb(\mesh,\edge_\text p):
	\begin{cases}
		\ell(\edge')>\ell(\edge_\text p) &\text{if }\di(\edge')<j
		\\ \ell(\edge')\ge\ell(\edge_\text p) &\text{if }\di(\edge')\ge j.
	\end{cases}
	\]
	
	For the opposite edge $\edge_\text o\in\edges$, which is not refined, Theorem~\ref{thm: quasi-uniformity} yields
	\[  \Forall \edge' \in \neighb(\mesh,\edge_\text o):\left\{
	\begin{alignedat}{2}
		\ell(\edge_\text o) &\le\ell(\edge')\le\ell(\edge_\text o)+1 &\quad\text{if } \di(\edge')&<j
		\\ \ell(\edge_\text o)-1 &\le\ell(\edge')\le\ell(\edge_\text o)+1 &\quad\text{if } \di(\edge')&=j
		\\ \ell(\edge_\text o)-1 &\le\ell(\edge')\le\ell(\edge_\text o)   &\quad\text{if } \di(\edge')&>j.
	\end{alignedat}\right.
	\]
	
	Together with $\ell(\edge_\text p)=\ell(\edge_\text o)$, we have
	\[  \Forall \edge' \in \neighb(\mesh,\edge_\text o)\cap\neighb(\mesh,\edge_\text p):\left\{
	\begin{alignedat}{2}
		\ell(\edge')&=\ell(\edge_\text o)+1 &\quad\text{if } \di(\edge')&<j
		\\ \ell(\edge_\text o) &\le\ell(\edge')\le\ell(\edge_\text o)+1 &\quad\text{if } \di(\edge')&=j
		\\ \ell(\edge')&=\ell(\edge_\text o)   &\quad\text{if } \di(\edge')&>j.
	\end{alignedat}\right.
	\]
	The size of the joint neighborhood $ \neighb(\mesh,\edge_\text o)\cap\neighb(\mesh,\edge_\text p) $ is hence $p$ elements in $j$-orthogonal direction, centered at $\element$, times $p+2$ elements in the other direction(s), centered at $\element$.
	Elements in this neighborhood may have been subdivided in $j$-th direction, but in no other direction. Hence any $i$-orthogonal T-node ${\node}$, $i\ne j$, is far away in the sense that its extension cannot intersect with the extension of ${\node'}$.
\end{proof}

\begin{rem}\label{rem: tj-exts far away}
	The theorem above implies that for each iteration in Algorithm~\ref{alg: bezier mesh} for the B\'ezier mesh, the computed mesh contains only elements $\element\in\elements$ with either $\#\nodes(\element)=\#\edges(\element)=4$ or $\#\nodes(\element)=\#\edges(\element)=5$, where the latter are exactly those elements that neighbor a T- or I-node. 
\end{rem}
Theorem~\ref{thm: analysis-suitable} states that the T-splines defined over the T-mesh $\mesh$ are analysis-suitable. This implies that the functions are linearly independent. Moreover, on all elements $\element \in \elements \setminus \bigcup_{\node \in \enodes} \dc_p(\node)$ the functions are locally linearly independent.
\begin{theorem}[linear independence]\label{thm: linear-idependence}
	If the mesh $\mesh$ is generated by $\refine$, then the functions in $\cB$ as given in Definition~\ref{df: T-splines over an unstructured T-mesh} are linearly independent.
\end{theorem}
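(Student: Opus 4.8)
The plan is to prove linear independence by exploiting the decomposition of $\cB$ into the structured anchors $\{B_\node : \node\in\anchors\}$ and the extraordinary-node bundles $\bigcup_{\node\in\enodes}\cB_\node$, and then arguing that these three types of contributions cannot cancel one another. First I would observe that Assumption~\ref{assu:extraordinarynodes} guarantees that the $p$-disks of distinct extraordinary nodes are pairwise disjoint, so the functions in $\cB_{\node_1}$ and $\cB_{\node_2}$ for $\node_1\neq\node_2$ have disjoint supports and therefore decouple completely. Thus it suffices to treat one extraordinary node's bundle at a time, together with the structured anchors whose supports may reach into the transition region.

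Second, for the purely structured anchors, I would invoke analysis-suitability. Theorem~\ref{thm: analysis-suitable} establishes that the T-node extensions do not intersect, which is precisely the analysis-suitability (equivalently dual-compatibility) condition for the bivariate T-splines $B_\node$; by the classical result of~\cite{BBCS:2012} (and its higher-dimensional analogue in~\cite{Morgenstern:2017}), analysis-suitable T-splines are linearly independent. Hence the restriction of any vanishing linear combination to the region away from all $p$-disks forces the coefficients of the structured anchors supported there to vanish.

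Third, I would handle the coupling near a fixed extraordinary node $\node$. By Proposition~\ref{prop: spline-space-extraordinary-node}, the bundle $\cB_\node$ is a basis of $\mathcal{S}_\mathrm u(\node)$, the space of functions in $\splines^p(\bmesh,\kspecial)$ with support contained in $\domain{\dc_p(\node)}$; in particular these functions vanish to order $p-1$ on $\partial\domain{\dc_p(\node)}$. Any structured anchor $B_\node'$ whose support overlaps $\dc_p(\node)$ is, by the set $\anchors$ in Definition~\ref{df: T-splines over an unstructured T-mesh}, anchored outside the $(p+1)/2$-disk, so it is $C^{p-1}$ across the boundary edges $\ep^{p-1}(\edges(\node))$ and behaves like an ordinary bivariate B-spline there. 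The key step is to restrict the vanishing combination to the annular transition region $\domain{\dc_p(\node)}\setminus\domain{\dc_{(p+1)/2}(\node)}$: on this annulus the extraordinary-node functions and the structured anchors both reduce to ordinary T-splines on a structured submesh (Assumption~\ref{assu:extraordinarynodes} ensures no knot-line extensions enter the $p$-disk, so this region is structured), where the already-established structured linear independence applies. This pins down the coefficients of the structured anchors reaching into the disk, after which what remains is a combination purely inside $\mathcal{S}_\mathrm u(\node)$, whose coefficients vanish by Proposition~\ref{prop: spline-space-extraordinary-node}.

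The main obstacle I anticipate is the transition region itself: one must verify carefully that, in the annulus between the $(p+1)/2$-disk and the $p$-disk, the structured anchors and the embedded extraordinary-node functions genuinely agree with ordinary tensor-product T-splines on a \emph{common} structured submesh, so that a single classical linear-independence argument covers both families simultaneously. This requires tracking the embedding $\emb^k$ of Definition~\ref{df: embedding} and checking that its restriction to each of the $k$ sub-sectors $\mesh_i$ is an affine identification compatible with the structured mappings $\mapping{\elements_\node}$ of the overlapping anchors. Once the compatibility of the parametrizations on the overlap is in hand, the rest is a bookkeeping argument combining the three vanishing conclusions (disjoint extraordinary bundles, structured analysis-suitability, and the annulus matching) into the statement that all coefficients in the combination are zero.
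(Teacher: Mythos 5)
Your outward route is genuinely different from the paper's, but it has a real gap at its central step. On the annulus $\domain{\dc_p(\node)}\setminus\domain{\dc_{(p+1)/2}(\node)}$ you invoke ``the already-established structured linear independence,'' i.e.\ the global linear independence of analysis-suitable T-splines, to pin down coefficients. That inference fails: linear independence of a family of T-splines on a structured mesh says nothing about linear independence of their \emph{restrictions} to a subdomain, and to conclude coefficient vanishing from a combination that vanishes only on the annulus you would need \emph{local} linear independence there --- which is exactly what is unavailable inside the $p$-disk. Indeed the restricted family is visibly dependent: a bundle function $B^k_{\node'}\circ\emb^k$ restricted to one sector $\mesh_i$ collapses to a bivariate B-spline in the two coordinates $e_i,e_{i+1}$ multiplied by the constants $B_{\kv_j(\node')}(0)$ from the remaining factors, so distinct anchors $\node'\in\mathcal{I}^k_r$ differing only in coordinates invisible on $\mesh_i$ have \emph{proportional} restrictions on the annulus part of that sector. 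Moreover, the ``common structured submesh'' you hope for does not exist: a structured submesh is, by Definition~\ref{df: structured-mesh}, the image of a planar axis-aligned box mesh, and no such chart can cover a full annulus wrapping around an extraordinary node of valence $k\neq 4$; the compatibility of $\emb^k$ with the anchor charts $\mapping{\elements_\node}$, which you flag but defer, can only be arranged sector by sector, at which point the proportional restrictions above defeat a single classical independence argument. Your step 2 suffers from the same restriction issue (dual compatibility gives global, not local, independence), but it is repairable --- and once repaired it makes the annulus step unnecessary: the paper notes just before the theorem that the functions are \emph{locally} linearly independent on every element of $\elements\setminus\bigcup_{\node\in\enodes}\dc_p(\node)$, and since each anchor lies outside the $(p+1)/2$-disk while its support has radius $(p+1)/2$, every anchor's support contains elements outside all $p$-disks; local independence out there therefore annihilates \emph{all} anchor coefficients at once, after which the bundles decouple by disjointness of the $p$-disks and Proposition~\ref{prop: spline-space-extraordinary-node} finishes.

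For comparison, the paper argues in the opposite direction, from the inside out: (a) restricted to the $1$-disk $\dc_1(\node)$, the bundle $\cB_\node$ together with the ring of anchors $\anchors[\node]=\{\node'\in\anchors:\dist(\node,\node')=(p+1)/2\}$ spans $\splines^p(\bmesh,\kspecial)|_{\dc_1(\node)}$ and is linearly independent by construction; (b) all other anchors vanish identically on $\dc_1(\node)$, decoupling them from (a); and (c) the anchor-versus-anchor interactions are resolved not by local support arguments but by dual functionals $\lambda_{\node}$, whose dual compatibility $\lambda_{\node'}(B_\node)=\delta_{\node,\node'}$ follows from Theorem~\ref{thm: analysis-suitable} combined with Assumption~\ref{assu:disk-around-anchors} (any two supports lie in a common structured submesh). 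So where you try to separate the families on an intermediate annulus --- the one region where neither a structured chart nor local independence is available --- the paper separates them on the innermost disk and delegates the structured part to duality.
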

\begin{proof}
	The proof consists of three steps:
	\begin{itemize}
		\item[a)] For each extraordinary node $\node\in\enodes$, the functions in $\cB_\node$, together with the anchors $\anchors[\node] = \{\node'\in\anchors: \dist(\node, \node') = (p+1)/2\}$, span all functions in the $1$-disk $\dc_1(\node)$, i.e.,
		\[
		\mathrm{span} (\cB_\node \cup \anchors[\node])|_{\dc_1(\node)} = \splines^p(\bmesh,\kspecial)|_{\dc_1(\node)}.
		\]
		Moreover, they are linearly independent by construction.
		\item[b)] For all other anchors $\node' \in\anchors$, that are not contained in $\anchors[\node]$ for any extraordinary node $\node$, we have
		\[
		B_{\node'} |_{\dc_1(\node)} \equiv 0
		\]
		for all $\node \in \enodes$. Thus, they are linearly independent to the functions in (a).
		\item[c)] For all functions $B_{\node}$ with $\node\in\anchors$, one can define functionals $\lambda_{\node}$ on the support of $B_{\node}$, which form a dual basis, see, e.g.,~\cite{STV:2016}. Since for any pair of anchors $\node,\node'\in\anchors$ the supports of $B_{\node}$ and $\lambda_{\node'}$ are contained inside a structured submesh, due to Assumption~\ref{assu:disk-around-anchors}, the analysis-suitability condition in Theorem~\ref{thm: analysis-suitable} yields dual compatibility, i.e., $\lambda_{\node'} (B_{\node}) = \delta_{\node,\node'}$.
	\end{itemize}
	Consequently, all functions are linearly independent. 
\end{proof}
A similar statement is given for specific $C^1$/$C^2$-smooth cubic T-splines in~\cite[Proposition 3.2]{TosSH17}, where the proof follows a similar reasoning. See also~\cite{SSELBHS:2013,Li2015} for related approaches.

\section{Conclusions \& Outlook}\label{sec: conclusions}

We have introduced an adaptive refinement algorithm for T-splines on unstructured meshes in two dimensions. Motivated by ideas for higher-dimensional structured T-meshes, the refinement routine is based on the concept of so-called direction indices {that are involved in a recursive refinement procedure}. We have proved that the refinement algorithm has linear complexity and preserves analysis-suitability away from extraordinary nodes.
Additionally, we sketched a generalization to more general unstructured meshes and drew the connections to manifold splines and Isogeometric Analysis.

For the preservation of global analysis-suitability, we require uniform refinement in the $p$-disk of extraordinary nodes at the current stage, while the treatment of T-nodes in these regions is left to future work. Moreover, a combination of our adaptive refinement scheme with special constructions near extraordinary nodes, based on special splits and/or the imposition of smoothness of higher order as, e.g., in~\cite{SSELBHS:2013,CLBZG:2016,CWTLHKZ:2020,TosSH17,WLQHZC:2021}, is of great relevance for practical applications.  
Further, a natural extension of our work is the application to physical problems in the setting of Isogeometric Analysis, along with numerical experiments investigating the condition numbers and sparsity patterns of the system matrices.

\section*{Acknowledgments}

Roland Maier acknowledges support by the German Research Foundation (DFG) in the Priority Program 1748 \emph{Reliable simulation techniques in solid mechanics} (PE2143/2-2) and by the G\"oran Gustafsson Foundation for Research in Natural Sciences and Medicine. 
The research of Thomas Takacs is partially supported by the Austrian Science Fund (FWF) and the government of Upper Austria through the project P~30926-NBL entitled ``Weak and approximate $C^1$-smoothness in isogeometric analysis''.

\appendix

\section{Manifold interpretation of unstructured meshes}\label{sec: manifold interpretation}

In this section, we generalize the setting introduced in Section~\ref{sec: Preliminaries} from planar partitions to manifolds. The notion of an unstructured T-mesh extends quite directly to surfaces and manifold-like domains. In Section~\ref{sec: Partitions-Meshes}, we defined meshes via partitions of planar, polygonal domains. However, the functions on such unstructured T-meshes are defined locally, either on structured submeshes or on specific submeshes around extraordinary nodes. The entire polygonal domain is thus covered by overlapping submeshes and more precisely, every point of the partition, except for extraordinary nodes, is in the interior of some structured submesh. All structured submeshes $\mesh'$ possess a corresponding parameter domain $\domain{\pullback{\mesh'}}$, as introduced in Definition~\ref{df: structured-mesh}. In that definition, $\pullback{\mesh'}$ is a mesh composed entirely of axis-aligned rectangles. Any function $\varphi\in\cB$ defined on the mesh $\mesh$ has a spline representation on the parameter domain of each submesh, i.e., for the submesh $\mesh'$ we have the spline
\[
\pullback{\varphi}' : \domain{\pullback{\mesh'}} \rightarrow \real.
\]
The spline representations on different submeshes $\mesh'$ and $\mesh''$ are related through the mappings $\mapping{\elements'}$ and $\mapping{\elements''}$ introduced in Definition~\ref{df: structured-mesh}, via $\pullback{\varphi}' \circ \mapping{\elements'}^{-1} \circ \mapping{\elements''} = \pullback{\varphi}''$, which is defined only on the parameter domain of the intersection $\domain{\mesh'} \cap \domain{\mesh''}$.

This underlying structure suggests that the submeshes and parameter domains, together with the mappings $\mapping{\elements'}^{-1} \circ \mapping{\elements''}$, define a manifold. Splines can then be defined on this manifold, following the approach in~\cite{STV:2016}. The following subsections are devoted to defining T-meshes and T-splines based on such a manifold interpretation. 

\subsection{Meshes on parameter manifolds}

We follow~\cite{STV:2016}, which is based on~\cite{GH:1995}, to introduce an abstract representation of the parameter domain, the so-called \emph{parameter manifold}. 
For simplicity, we provide definitions only for two-dimensional manifolds. Extensions to general dimensions $d\geq 2$ can be found in~\cite{STV:2016}.

\begin{df}[proto-manifold]\label{df: proto-manifold} A proto-manifold of dimension two
	consists of
	\begin{itemize}
		\item a finite set $\{\omega_i\}_{i=1,\ldots,N}$  (named \emph{proto-atlas}) of \emph{charts} $\omega_i$, that are open polytopes $ \omega_i \subset \real^2$;
		\item a set of open \emph{transition domains}  $\{
		\omega_{i,j}\}_{i,j=1,\ldots,N}$ that are polytopes such that  $  \omega_{i,j} \subset   \omega_{i} $, $
		\omega_{i,i} =  \omega_{i} $,  and each  $  \omega_{i,j} $ is the
		interior of its closure;
		\item a set of \emph{transition functions} $\{  \psi_{i,j}\}_{i,j=1,\ldots,N}$, that are
		homeomorphisms  $\psi_{i,j} : \omega_{i,j}  \rightarrow  \omega_{j,i} $ fulfilling 
		the \emph{cocycle condition} $\psi_{j,k}\circ \psi_{i,j} =\psi_{i,k} $ in $ \omega_{i,j}  \cap  \omega_{i,k}  $ for all $i,j,k=1,\ldots,N$, where $\psi_{ii}=\operatorname{id}$;
		\item For every $i,j$, with $i\neq j$, for every $\boldsymbol{\zeta}_i \in \partial \omega_{i,j} \cap \omega_{i}$ and $\boldsymbol{\zeta}_j \in \partial \omega_{j,i} \cap \omega_{j}$, there are open balls, $V_{\boldsymbol{\zeta}_i}$ and $V_{\boldsymbol{\zeta}_j}$, centered at $\boldsymbol{\zeta}_i$ and $\boldsymbol{\zeta}_j$, such that no point of $V_{\boldsymbol{\zeta}_j} \cap \omega_{j,i}$ is the image of any point of $V_{\boldsymbol{\zeta}_i}\cap\omega_{i,j}$ by $\psi_{i,j}$.
	\end{itemize}
\end{df}
Note that the last condition is taken from~\cite{SXGNMV:2009}. As pointed out in~\cite{STV:2016}, it prevents bifurcations of the domain and guarantees that the resulting object is indeed a manifold.

It is easy to see that on a planar partition of $\Omega$, which is segmented into overlapping, open subdomains such that $\Omega = \bigcup_{i} \Omega_i$, each $\Omega_i$ is an open polytope. Thus, the choice $\omega_i = \Omega_i$ as charts, $\omega_{i,j} = \Omega_{i,j} = \Omega_i \cap \Omega_j$ as transition domains with transition functions $\psi_{i,j}$ being the identity mapping on $\omega_{i,j}$ forms a proto-manifold. However, such a structure complicates the definition of smooth spline functions over the mesh.
Alternatively, we can define for every structured submesh $\mesh_i$ over the domain $\Omega_i = \domain{\mesh_i}$ the corresponding chart as $\omega_i=\domain{\pullback{\mesh_i}}$. In addition to such structured charts, one can introduce for each extraordinary node $\node \in \enodes$ a specific unstructured chart, which corresponds to the domain $\domain{\dc_{p}(\node)}$ of the $p$-disk around the node.

By merging and identifying the charts of the  proto-manifold, we obtain a manifold, which can serve as the parameter domain, thus the name \emph{parameter manifold}.
\begin{df}[parameter manifold] \label{df: parameter-manifold}
	Given a proto-manifold, the set 
	\begin{equation}\label{eq:disj-union-of-Omega-i}
		\Omega^P = \left ( \bigsqcup_{i=1,\ldots,N}  \omega_i\right )
		\bigg/\sim
	\end{equation}
	is called a \emph{parameter manifold}. 
	Here $\bigsqcup$ denotes the disjoint union, i.e., 
	\[\bigsqcup_{i=1,\ldots,N}  \omega_i = \left \{[\bzeta_i,i] , \,
	\bzeta_i\in \omega_i,i=1,\ldots,N \right \}
	\]
	and the equivalence relation $\sim$ is defined for all $\bzeta_i\in
	\omega_i$ and $\bzeta_j\in  \omega_j$, as
	\[
	[\bzeta_i,i] \sim
	[\bzeta_j,j] \Leftrightarrow \psi_{i,j}(\bzeta_i)=\bzeta_j.
	\]
	We denote by $\pi_{i}(\bzeta_i) \in \Omega^P$ the equivalence class corresponding to $\bzeta_i \in  \omega_i$. 
\end{df}
In the following, we use the notation  $ \Omega^P_i = \pi_{i}( \omega_i)$ and  $ \Omega^P_{i,j} = \pi_{i}( \omega_{i,j})$, where  $  \Omega^P_i,\Omega^P_{i,j} \subset \Omega^P$. Hence, we have $ \Omega^P_{i,j} = \Omega^P_{j,i} =  \Omega^P_{i} \cap \Omega^P_{j}$.  Note that $\pi_{i}$ is a one-to-one correspondence
between each $ \omega_i$ and  $\Omega^P_i $ and plays the role of a local representation. 
Next, we define a proto-mesh on the charts. 
\begin{df}[proto-mesh]\label{df: proto-mesh}
	A \emph{proto-mesh} on a proto-manifold is a collection of conforming meshes, i.e., it is a set 
	\begin{equation*}
		\label{eq:mesh-i}
		\{\tau_i \}_{i=1,\ldots,N} \hspace{10pt} \mbox{ with } \tau_i= \{ \elementSmall \subset  \omega_i \},
	\end{equation*}
	where
	\begin{enumerate}[label=(P\arabic*)]
		\item each set $\tau_i$ is composed of axis-parallel, open
		rectangles $\elementSmall$, called \emph{elements}, and each set $\tau_i$ is a mesh on $\omega_i$, i.e. the elements are disjoint 
		and the union of the closures of the elements is the closure of $\omega_i $,
		\item for every $i,j$, $\omega_{i,j}$ is the interior of the 
		union of the closure of elements of $\tau_i$, and 
		\item \label{pm: trans fncts map elems to elems} the transition functions $\psi_{i,j}$ map elements onto elements, i.e.,
		\begin{equation*}
			\label{eq:mesh-compatile-with-transitions}
			\forall \elementSmall \in \tau_i, \,  \psi_{i,j} (\elementSmall) \in \tau_j.
		\end{equation*}
		Similarly, the transition functions map vertices of $\elementSmall$ onto vertices of $\psi_{i,j} (\elementSmall)$ and edges of $\elementSmall$ onto edges of $\psi_{i,j} (\elementSmall)$.
	\end{enumerate}
\end{df}
Note that in addition to mesh elements one can also define the edges and nodes of the mesh derived from the elements in $\tau_i$. To simplify the notation, we omit those definitions.

The proto-mesh naturally defines a mesh on $\Omega^P$ through the mappings $\pi_i$.
\begin{df}[mesh on $\Omega^P$] We define the mesh on the
	parameter manifold  $\Omega^P$ as $\mesh = (\elements,\edges,\nodes)$, with 
	\begin{equation*}
		\label{eq:mesh}
		\elements = \{ \element \subset \Omega^P \mid \element = \pi_{i}(\elementSmall), \elementSmall \in    \tau_i\text{ for some } i=1,\ldots,N \}.
	\end{equation*}
	The property \ref{pm: trans fncts map elems to elems} allows analogous definitions of the edges 
	\[\edges = \{\edge\subset \Omega^P\mid \edge = \pi_{i}(\edgeSmall),\element = \pi_{i}(\elementSmall), \edgeSmall\in\edges(\elementSmall),\elementSmall\in\tau_i\text{ for some }i=1,\ldots,N\}\]
	\\[-1.75em]and nodes\\[-1.25em]
	\[\nodes = \{\node\subset \Omega^P\mid \node = \pi_{i}(\nodeSmall),\element = \pi_{i}(\elementSmall), \nodeSmall\in\nodes(\elementSmall),\elementSmall\in\tau_i\text{ for some }i=1,\ldots,N\}\]
	of the mesh $\mesh$. 
\end{df}
Each chart $\omega_i$ defines a submesh $\mesh_i$ with elements $\elements_i = \{\pi_i(\elementSmall),\elementSmall\in\tau_i\}$. 
Given this alternative definition of a mesh and submesh, we can also extend the definition of a structured (sub)mesh to manifolds. A chart $\omega_i$ and local mesh $\tau_i$ are called \emph{structured} if $\tau_i$ is a structured mesh according to Definition~\ref{df: structured-mesh}. The definition can be extended to larger submeshes composed of several charts and is directly applicable to T-meshes as in Section~\ref{subsec: T-meshes}.

If we are given a partition of a polygonal domain $\Omega$, then $\Omega^P_i = \Omega_i$ for all subdomains and we can associate $\Omega^P$ with $\Omega$. Each element of the equivalence class $\Omega^P$ corresponds to a point in $\Omega$. For general parameter manifolds $\Omega^P$, there exists a domain $\Omega \subset \real^n$ and a bijective embedding $\mathbf{G}:\Omega^P \rightarrow \Omega$. This yields a mapping $\mapping{\elements_i} = \mathbf{G} \circ \pi_{i} : \omega_i \rightarrow \Omega_i \subseteq \Omega$. We can thus define T-splines over the manifold domain $\Omega$.

\subsection{Splines over meshes on parameter manifolds}

To define splines over T-meshes on a parameter manifold, in analogy to Section~\ref{subsec: T-meshes}, the functions need to be defined consistently as piecewise polynomials and the present setting requires a clear definition of continuity over edges.

Each spline function $\varphi:\Omega\rightarrow\real$ is defined as a piecewise polynomial over structured charts. Thus, for each structured chart $\omega_i$ the functions
\[
\pullback{\varphi}_i : \omega_i \rightarrow \real,
\]
with $\pullback{\varphi}_i = \varphi \circ \mapping{\elements_i}$, are polynomials on each element in $\tau_i$. Since the functions need to be consistent for all elements shared by several structured charts, we need for all structured charts $\omega_i$ and $\omega_j$ that 
\[
\pullback{\varphi}_i \circ {\mapping{\elements_i}}^{-1} \circ {\mapping{\elements_j}} = \pullback{\varphi}_i \circ {\pi_i}^{-1} \circ {\mathbf{G}}^{-1} \circ \mathbf{G} \circ {\pi_j} = \pullback{\varphi}_i \circ {\pi_i}^{-1} \circ {\pi_j} = \pullback{\varphi}_i \circ \psi_{j,i} = \pullback{\varphi}_j.
\]
For both functions $\pullback{\varphi}_i$ and $\pullback{\varphi}_j$ to be polynomials of a fixed bi-degree $(p,p)$ on each element, we need the transition function $\psi_{j,i}$ to be an Euclidean motion. In general, following~\cite{STV:2016}, transition functions are assumed to be piecewise bilinear mappings with respect to the mesh. To obtain a unique definition of edge lengths, we assume that all transition functions between structured charts are global Euclidean motions. Moreover, we assume that every element and every edge of the mesh is contained inside the domain $\domain{\Omega_i^P}$ of some structured chart $\omega_i$.

The concept of continuity of functions $\varphi:\Omega = \mathbf{G}(\Omega^P)\rightarrow\real$, defined over a parameter manifold $\Omega^P$, can now be introduced consistently. A function $\varphi$ is $C^k$-continuous across an edge $\edge \in \edges$, if $\pullback{\varphi}_i$ is $C^k$-continuous across ${\pi_i}^{-1}(\edge)$ for all structured charts $\omega_i$ that contain the edge (cf.~Definition~\ref{df: continuity edge}). This definition is consistent since all transition functions between structured charts are assumed to be $C^\infty$. This generalization allows one to define a spline space as in Definition~\ref{df: spline space T-mesh}, which is solely characterized by underlying structured charts, and extends the construction in Section~\ref{sec: Manifold splines}. For more rigorous definitions of structured/unstructured meshes and spline spaces on parameter manifolds, we refer to~\cite{STV:2016}.

\section{Refining totally unstructured meshes}
\label{sec: other meshes}

In this section, we investigate a variant of the proposed refinement scheme with rare violations of the condition \eqref{eq: di criterion} that is based on the manifold definition introduced in the previous section. 
This will enable us to use the refinement scheme on totally unstructured meshes (see Fig.~\ref{fig: tu mesh}) and when obeying a set of rules for these violations, we preserve the properties of the refinement algorithm presented above.

In order to identify violations of \eqref{eq: di criterion}, we generalize the definition of direction indices and replace Definition~\ref{df: direction index}, which works on the T-mesh $\mesh$ on the parameter manifold $\Omega$ by the following construction.
Recall Definition~\ref{df: proto-mesh} for the proto-mesh.
On each submesh $\tau_i$, $i=1,\dots,N$, the edges are assumed to be axis-parallel. By setting the direction index of an edge $\edgeSmall_i$ in $\tau_i$ to the actual direction of $\edgeSmall_i$, we find a direction labeling $\di_i$ that locally satifies \eqref{eq: di criterion}.
The condition \ref{pm: trans fncts map elems to elems} says that the transition functions map edges onto edges and each edge $\edge$ in the T-mesh $\mesh$ represents an equivalence class $\edge=\pi_i(\edgeSmall_i)$ of edges $\edgeSmall_i$ in the submeshes $\tau_i$. We get a set-valued direction labeling $\di:\edges\to2^\nat$, 
with $\di(\pi_i(\edgeSmall_i))=\{\di_j(\edgeSmall_j)\mid \edgeSmall_j\sim \edgeSmall_i\}$.

\begin{df}[admissible direction labeling]\label{df: adm dir labeling}%
	We call a direction labeling \emph{admissible} if 
	for any edge $\edge$, 
	the following conditions are fulfilled:
	\begin{itemize}
		\item Any adjacent edge $\edge'\in\edges(\nodes(\edge))$ of $\edge$ satisfies $\max\di(\edge')<\min\di(\edge)$ or $\min\di(\edge')>\max\di(\edge)$.
		\item In any neighbor element $\element\in\elements(\edge)$, the opposite edge $\edge'\in\edges(\element)\setminus\edges(\nodes(\edge))$ of $\edge$ satisfies $\max\di(\edge')\ge\min\di(\edge)$ and $\min\di(\edge')\le\max\di(\edge)$.
	\end{itemize}
\end{df}

If these conditions are met, Algorithm~\ref{alg: ref} can be applied, with ``$\di(\edge')<\di(\edge)$'' understood as $\max\di(\edge')<\min\di(\edge)$.
The results from section~\ref{sec: mesh properties} are still valid in this case, since the same interpretation is applicable in the proofs.
Note, however, that the relation ``$\max\di(\edge')\ge\min\di(\edge)$ and $\min\di(\edge')\le\max\di(\edge)$'' instead of $\di(\edge')=\di(\edge)$ in the second bullet point above is not a transitive relation, but only applicable elementwise.
We refer to Fig.~\ref{fig: tumesh gdi} for an example with the totally unstructured mesh from Fig.~\ref{fig: tu mesh}.

\begin{figure}[t]
	\centering
	\includegraphics[width=.19\textwidth]{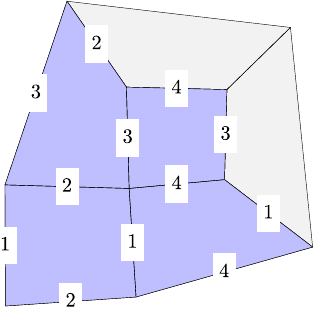}\hspace{.01\textwidth}%
	\includegraphics[width=.19\textwidth]{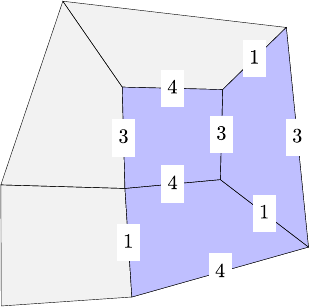}\hspace{.01\textwidth}%
	\includegraphics[width=.19\textwidth]{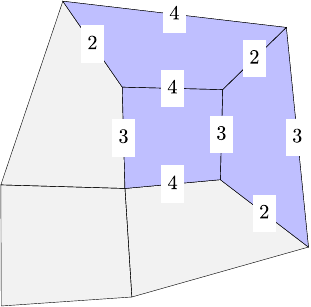}\hspace{.01\textwidth}%
	\includegraphics[width=.19\textwidth]{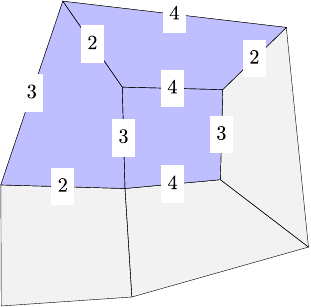}\hspace{.01\textwidth}%
	\includegraphics[width=.19\textwidth]{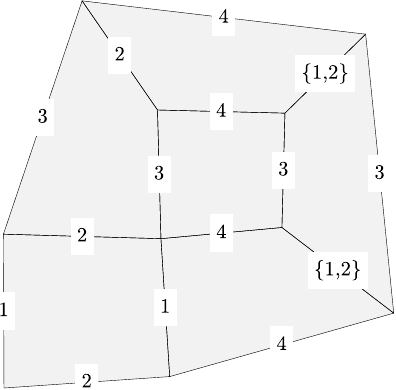}\\
	\includegraphics[height=.45\textwidth]{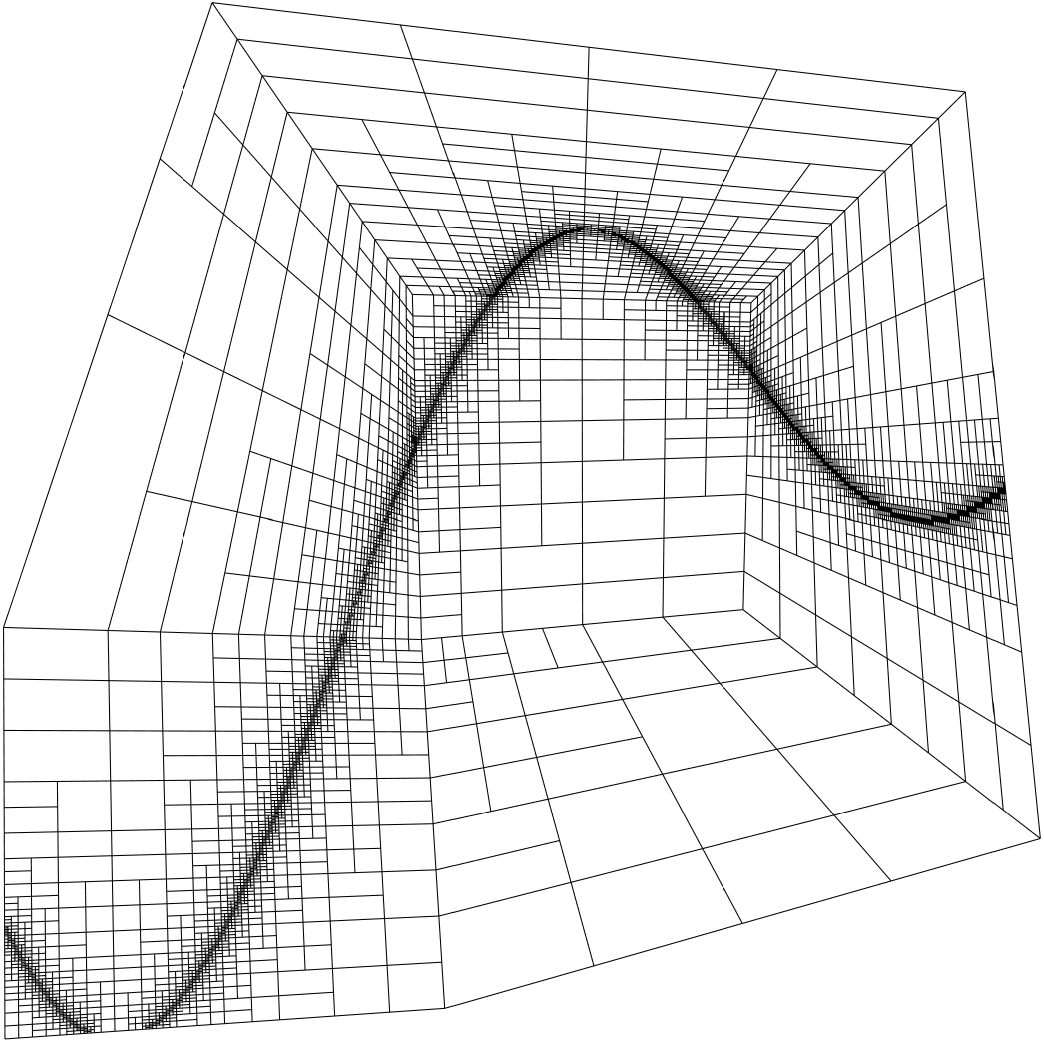}\hspace{.05\textwidth}%
	\includegraphics[height=.45\textwidth]{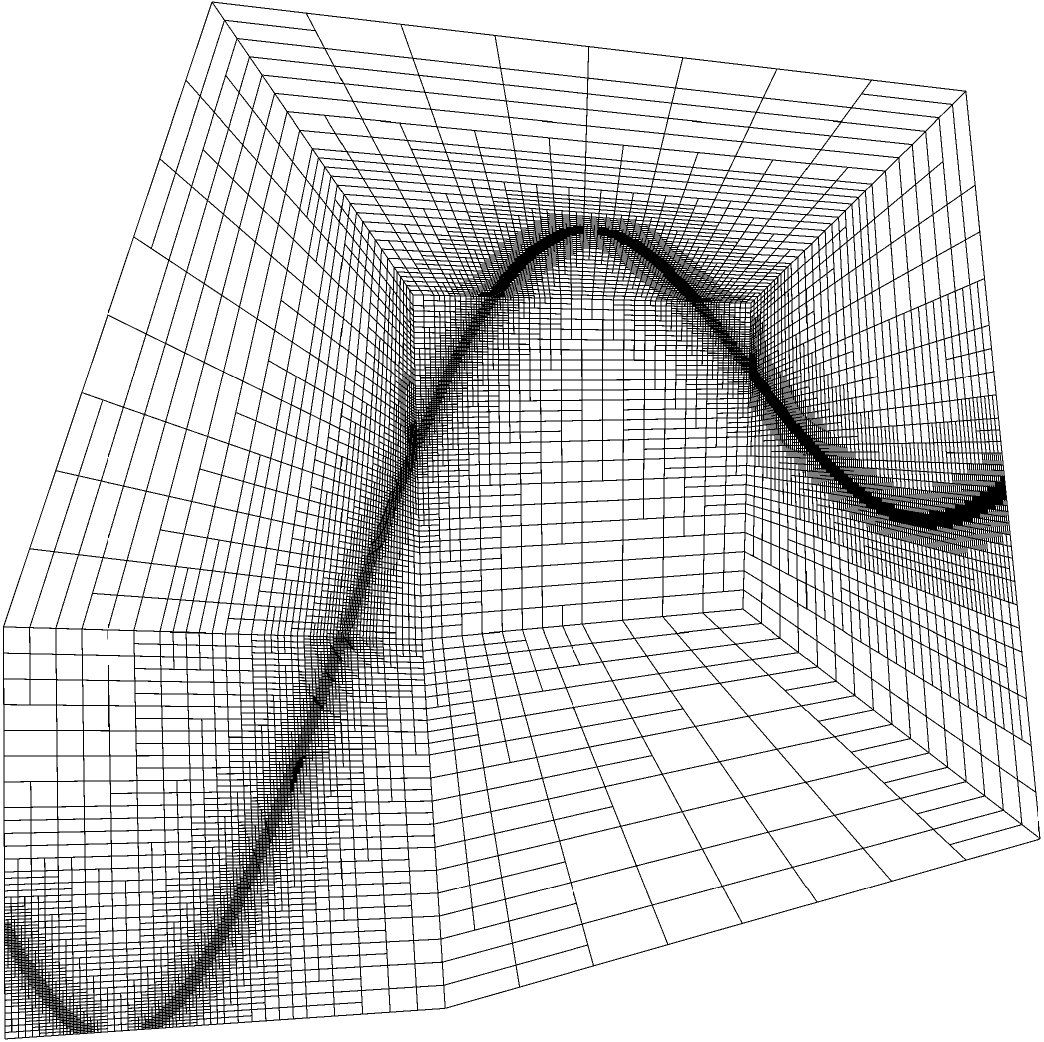}
	\caption{Although the mesh is totally unstructured, we can find a direction labeling that satisfies \eqref{eq: di criterion} locally and the associated generalized direction labeling satisfies Definition~\ref{df: adm dir labeling} (top row). This can be applied for refinement, here for $p=1$ (bottom left) and $p=3$ (bottom right).}
	\label{fig: tumesh gdi}
\end{figure}
For the deduction of direction indices, we further elaborate on the approach from Definition~\ref{df: direction index}. Regarding the mesh as a plane graph $G_\mesh$, the above-mentioned approach can be explained via the dual graph $G^\star$, which contains a node for each element in $G_\mesh$, including a node for the exterior element which neighbors all boundary edges. Two nodes in $G^\star$ are connected if the corresponding elements in $G_\mesh$ have a common edge. Hence there is a one-to-one correspondence between the edges of $G_\mesh$ and the edges of $G^\star$. The condensation of edges into equivalence classes $[E]$ corresponds to a circuit decomposition of $G^\star$, namely the unique decomposition into smallest circuits that traverse all nodes in a way, such that subsequent edges have no common element. Circuits are defined as follows. 
\begin{itemize}
	\item A walk is a sequence $(v_1,e_1,\dots,e_n,v_{n+1})$ such that $v_i$ and $v_{i+1}$ are boundary nodes of the edge $e_i$ for all $i=1,\dots,n$.
	\item A trail is a walk without edge repetitions.
	\item A path is a trail without node repetitions.
	\item A circuit is a closed trail, i.e. a trail $(v_1,e_1,\dots,e_n,v_{n+1})$ with $v_1=v_{n+1}$.
\end{itemize}
In order to eliminate self-crossings of the circuits in the totally unstructured case, we subdivide the circuits into overlapping paths that are not self crossing, see Figure~\ref{fig: finding di hard}. Afterwards, the corresponding touch graph is colored with the additional constraint that for any edge with multi-valued direction with minimal value $a$ and maximal value $b$ must not have a direction index in the range $[a,b]$, all adjacent edges must not have direction indices in the range $[a,b]$.

\begin{figure}
	\centering
	\includegraphics[width=.26\textwidth]{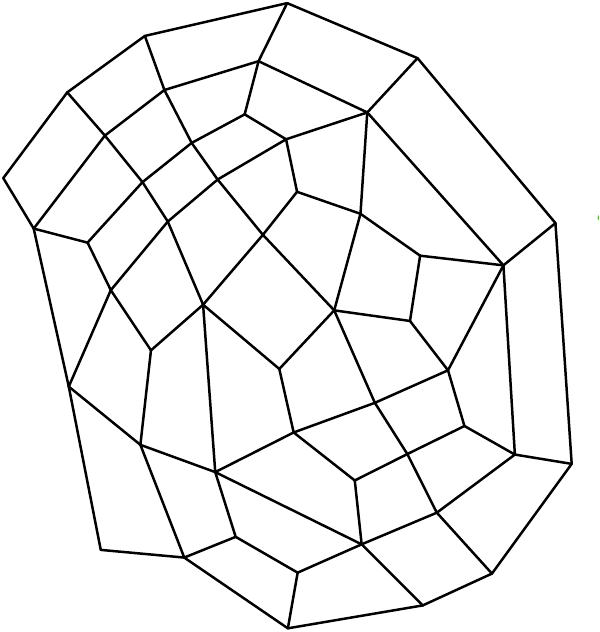}%
	\hspace{.07\textwidth}%
	\includegraphics[width=.26\textwidth]{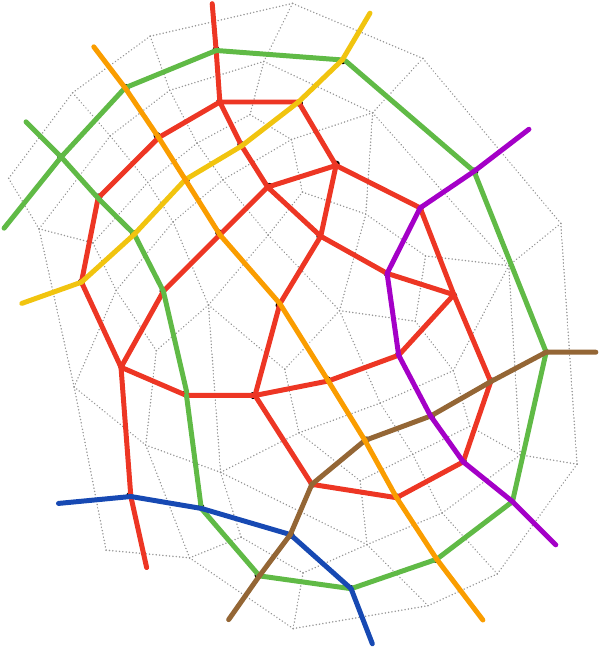}%
	\hspace{.07\textwidth}%
	\includegraphics[width=.26\textwidth]{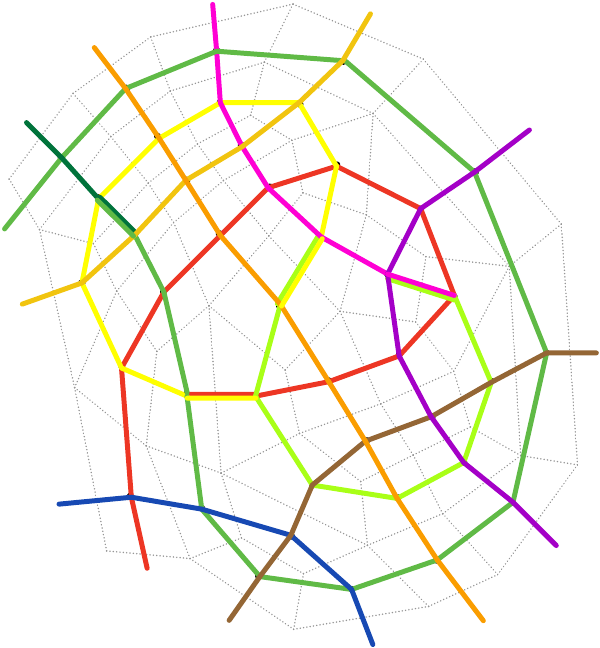}%
	\hspace{.07\textwidth}%
	\includegraphics[width=.26\textwidth]{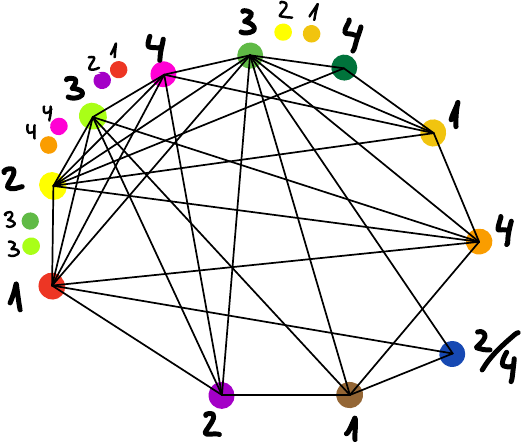}%
	\hspace{.07\textwidth}%
	\includegraphics[width=.26\textwidth]{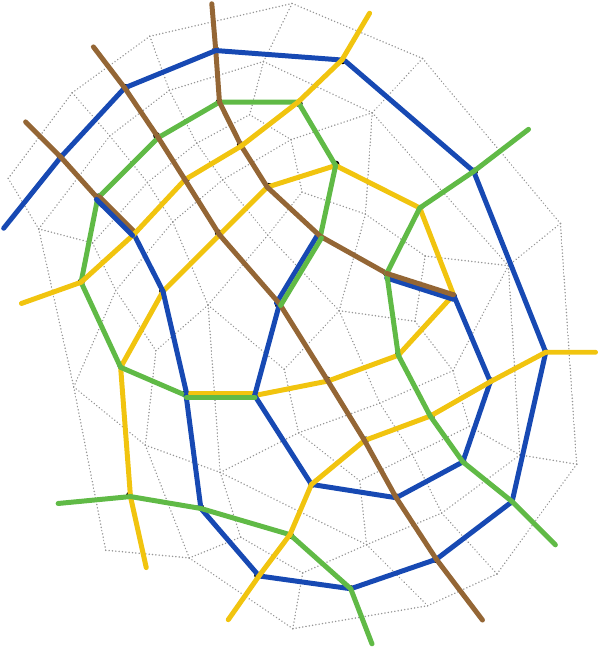}%
	\hspace{.07\textwidth}%
	\includegraphics[width=.26\textwidth]{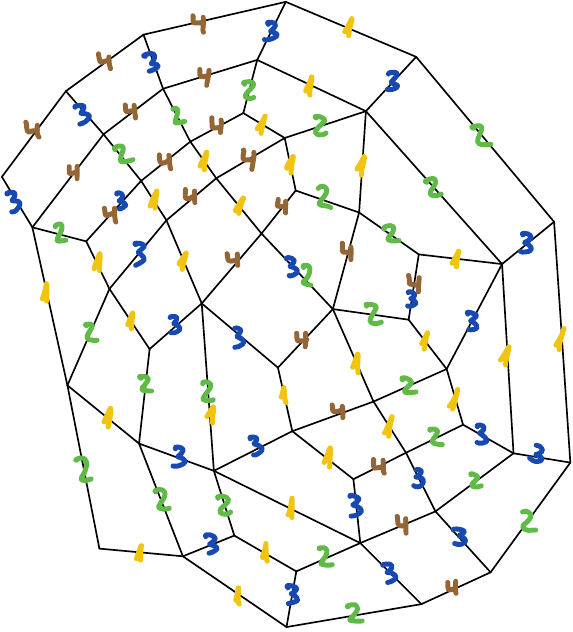}%
	\caption{For the deduction of direction indices in totally unstructured meshes, we subdivide the circuits of the dual graph into overlapping paths. Then, we solve a constrained coloring problem on the touch graph of the paths.
		The touch graph contains a node for each path, and two nodes are connected if the corresponding paths have a common node except for the exterior node. The constrained coloring problem seeks a labeling $c:\text{paths}\to\nat$ such that any two paths $p,q$ with a common non-exterior node satisfy $c(p)\ne c(q)$, and that for any edge $e$ that connects vertices $v,w$ and that is in two distinct, overlapping paths $p,q$, any other path $r$ containing one of the nodes $v,w$ satisfies $c(r)<\min(c(p),c(q))$ or $c(r)>\max(c(p),c(q))$. This constraint corresponds to the second bullet point in Definition~\ref{df: adm dir labeling}.
	}
	\label{fig: finding di hard}
\end{figure}

\section{Geometry mapping and isogeometric functions}\label{sec: geometry}

As T-splines are used in isogeometric analysis, one also has to take into account a geometry mapping which maps T-splines from the (manifold) domain $\Omega$ onto a physical domain of interest $D$.

We consider a T-mesh $\mesh$ and a corresponding T-spline basis $\cB$. We define a mapping
\begin{equation*}
	\mathbf{G} : \Omega \rightarrow D \subset \real^n,
\end{equation*}
where $n$ is the spatial dimension of the physical domain of interest such that 
\begin{equation*}
	\mathbf{G} \in \left(\mathrm{span}(\cB)\right)^n.
\end{equation*}
We can now define mapped elements of the T-mesh and mapped (isogeometric) T-splines over $D$ as
\begin{equation*}
	\mathcal{T} = \{ \mathbf{G}(\element) \,:\, \element \in \elements \}
\end{equation*}
and 
\begin{equation*}
	\cB(\mathcal{T}) = \{ B \circ \mathbf{G}^{-1} \,:\, B \in \cB \},
\end{equation*}
respectively. 

Let $\mesh'$ be a refinement of $\mesh$ and let $\cB(\mesh')$ be the refined T-spline basis. For many configurations, e.g., near extraordinary nodes, it may not be feasible to assume nestedness of the spaces, i.e., 
\begin{equation}
	\mathrm{span}(\cB(\mesh)) \not\subseteq \mathrm{span}(\cB(\mesh')).
\end{equation}
Consequently, the mapping $\mathbf{G'}\colon \Omega \rightarrow D'$ may not be equal to $\mathbf{G}$ but only an approximation. If the dimensions of $\Omega$ and $D$ are equal ($d = n$), it is reasonable to assume that the physical domain remains unchanged, i.e., $D = D'$. This is usually not possible if the domain is a surface in space, for instance. In both cases, the non-nestedness of the underlying spline spaces has to be taken into account when refining, cf.~\cite{YT:2017}.

\end{document}